\documentclass[a4paper,11pt]{article}

\usepackage[T1]{fontenc}
\usepackage[utf8]{inputenc}
\usepackage{lmodern}
\usepackage[text={6.5in,9.0in},centering]{geometry}
\usepackage{microtype}
\usepackage{amsmath,amssymb,amsthm,mathtools,mathrsfs}
\usepackage{enumitem}
\usepackage{needspace}
\usepackage{xcolor}
\usepackage{authblk}
\usepackage{amsrefs}
\usepackage[
colorlinks=true,
linkcolor=blue,
citecolor=blue,
urlcolor=blue,
pdfauthor={Juan J. Jimenez},
pdftitle={Stochastic heat equations with non-Lipschitz coefficients
	driven by symmetric alpha-stable Levy white noise}
]{hyperref}
\renewcommand{\PrintDOI}[1]{\href{https://doi.org/#1}{doi:\nolinkurl{#1}}}

\numberwithin{equation}{section}
\numberwithin{figure}{section}
\setlist[enumerate]{leftmargin=2em,itemsep=0.15em,topsep=0.3em}

\theoremstyle{plain}
\newtheorem{theorem}{Theorem}[section]
\newtheorem{proposition}[theorem]{Proposition}
\newtheorem{lemma}[theorem]{Lemma}
\newtheorem{corollary}[theorem]{Corollary}
\newtheorem{definition}[theorem]{Definition}
\theoremstyle{definition}

\theoremstyle{remark}

\newcommand{\bbB}{\mathbb{B}}
\newcommand{\bbD}{\mathbb{D}}
\renewcommand{\H}{\mathbb{H}}
\renewcommand{\L}{\mathbb{L}}
\renewcommand{\P}{\mathbb{P}}
\newcommand{\bbQ}{\mathbb{Q}}
\newcommand{\calA}{\mathcal{A}}
\newcommand{\calB}{\mathcal{B}}
\newcommand{\calC}{\mathcal{C}}
\newcommand{\calD}{\mathcal{D}}
\newcommand{\calE}{\mathcal{E}}
\newcommand{\calG}{\mathcal{G}}
\newcommand{\calI}{\mathcal{I}}
\newcommand{\calJ}{\mathcal{J}}
\newcommand{\calK}{\mathcal{K}}
\newcommand{\calL}{\mathcal{L}}
\newcommand{\calM}{\mathcal{M}}
\newcommand{\calN}{\mathcal{N}}
\newcommand{\calP}{\mathcal{P}}
\newcommand{\calQ}{\mathcal{Q}}
\newcommand{\calR}{\mathcal{R}}
\newcommand{\calS}{\mathcal{S}}
\newcommand{\calT}{\mathcal{T}}
\newcommand{\calV}{\mathcal{V}}
\newcommand{\calZ}{\mathcal{Z}}
\newcommand{\frakB}{\mathfrak{B}}
\newcommand{\frakC}{\mathfrak{C}}
\newcommand{\frakS}{\mathfrak{S}}
\newcommand{\fraks}{\mathfrak{s}}
\newcommand{\scrL}{\mathscr{L}}
\newcommand{\scrM}{\mathscr{M}}
\newcommand{\R}{\mathbb{R}}
\newcommand{\N}{\mathbb{N}}
\newcommand{\E}{\mathbb{E}}

\newcommand{\Fil}{\mathcal{F}}
\newcommand{\m}{\mathfrak{m}}
\newcommand{\dd}{\,\mathrm{d}}
\newcommand{\ind}[1]{\mathbf{1}_{\{#1\}}}

\newcommand{\Path}[1]{\bbD([0,T];#1)}
\newcommand{\Nt}{\widetilde N}
\newcommand{\LK}{L^{K}}
\newcommand{\tauK}{\tau_{K}}

\newcommand{\Cgrow}{C_{\mathrm{gr}}}
\newcommand{\Cbound}[2]{\calC_0(#1,#2)}

\title{Stochastic heat equations with non-Lipschitz coefficients\\
	driven by symmetric $\alpha$-stable L\'evy white noise}
\author{Juan J. Jim\'enez\thanks{Email:
		\href{mailto:juj0003@auburn.edu}{juj0003@auburn.edu}}}
\affil{Department of Mathematics and Statistics\\
	Auburn University, Auburn, AL 36849, USA}
\date{}

\begin{document}
	\maketitle
	\begin{abstract}
		We study the stochastic heat equation on a bounded interval with zero Dirichlet boundary conditions, driven by symmetric $\alpha$-stable space--time L\'evy white noise, where $1<\alpha<2$. Both the drift and noise coefficients are continuous and have at most linear growth. For bounded deterministic initial data and every $r>1/2$, we construct a weak solution with c\`adl\`ag paths in the negative Sobolev space $\H_{-r}$. We also construct a predictable mild solution on the same stochastic basis and with the same noise. At each fixed deterministic time, this mild solution is a density of the distribution-valued weak solution.
	\end{abstract}

	\noindent {\em MSC 2020:} Primary 60H15; Secondary 60G60, 60G52
	
	\vspace{1mm}
	
	\noindent\textbf{Keywords:} stochastic heat equation; 
	symmetric $\alpha$-stable L\'evy white noise; continuous coefficients; weak existence; random fields; $\alpha$-stable random measure; L\'evy basis.

	\section{Introduction}\label{sec:introduction}
	\subsection{The problem and its main difficulty}
	
	A jump of the space--time noise adds a weighted Dirac mass at the jump location to the solution of the stochastic heat equation (SHE).
	Although the heat kernel smooths this mass at later times, the jump of
	the solution itself, that is, the difference between the state and its
	left limit, is a multiple of a Dirac mass, not a function. So, unlike
	for the SHE driven by Gaussian noise, the c\`adl\`ag paths of the
	solution must be studied in a space of distributions, and this changes
	both the natural path space and the compactness argument. Chong--Dalang--Humeau \cite{CDH}*{Section~2.1.2, p.~129} illustrate this obstruction for a symmetric pure-jump noise with finite intensity. Their example rules out c\`adl\`ag paths in $\L^p$ on a bounded interval when a nonzero jump is present. It also rules out a jointly continuous space--time version. They show that $\H_{-r}$, with $r>1/2$, is the natural negative Sobolev space for c\`adl\`ag paths on an interval; see \cite{CDH}*{Theorem~2.5 and Remark~2.6}. The threshold is sharp when nonzero Dirac jumps occur.
	
	For the SHE driven by Gaussian space--time white noise, a jointly
	continuous version exists at positive times in the globally Lipschitz
	case; see Dalang--Sanz-Sol\'e \cite{DSS26}*{Theorem~4.3.4(a)}. Joint
	continuity is used in many existence and uniqueness results
	for non-Lipschitz coefficients; see, e.g., \cites{MP11,DKZ19,FN21,CFHS25}.
	These arguments do not apply directly to the SHE when the noise has jumps.
	
	For jump noise, several works prove the existence of a solution to the SHE with non-Lipschitz coefficients under additional assumptions on the coefficients or the noise. Mytnik \cite{Mytnik02} constructs nonnegative weak solutions for one-sided stable noise and a power-type
	noise coefficient.
	
	Hausenblas \cite{Hausenblas07} develops a Banach-space approach under quantitative H\"older assumptions and uses localization to handle large jumps. The stable-noise example in that work has a power-type
	coefficient and only positive jumps. Continuity and linear
	growth alone do not imply these H\"older assumptions. Brze\'zniak--Hausenblas--Razafimandimby \cite{BHR18} study martingale solutions to abstract reaction--diffusion equations under different dissipativity and integrability assumptions. Their application to space--time white noise assumes bounded jumps and a finite $p$-moment of the L\'evy measure; see \cite{BHR18}*{condition~(5.1)} and the erratum \cite{BHR25erratum}. 
	
	Kavin--Majee \cite{KM25} prove well-posedness for the SHE on a bounded
	domain with the logarithmic drift $u\log|u|$, driven by a multiplicative
	L\'evy noise that is homogeneous in time but is not a space--time noise:
	its jumps occur at random times and act on the whole domain at once.
	The drift grows faster than linearly and is not locally Lipschitz at
	$0$. The noise coefficient is locally Lipschitz, with a constant that
	may grow like $(\log|u|)^{1/2}$, and has sublinear growth in $u$; in the
	jump variable, it is dominated by a bounded function that is integrable
	with respect to the L\'evy measure. This excludes $\alpha$-stable noise.

	In a related hyperbolic setting, Balan--Jim\'enez--Quer-Sardanyons
	\cite{BJQ} prove the existence and uniqueness of solutions to the
	fractional wave equation with locally Lipschitz coefficients of at most
	linear growth, driven by general symmetric pure-jump space--time L\'evy
	noise without additional moment assumptions. Their results include
	symmetric $\alpha$-stable noise. In their range of parameters, the wave
	kernel is regular enough for the solution to have positive Sobolev
	regularity, and hence to be bounded in space. This allows them to
	truncate the coefficients using Sobolev norms, as is done in the
	Gaussian setting. For the SHE, this approach does not apply directly:
	the Dirac masses created by nonzero jumps of the solution rule out the
	spatial bounds it requires.

	For globally Lipschitz noise coefficients, Balan \cite{Balan14} constructs predictable mild solutions to a class of SPDEs on bounded domains driven by full $\alpha$-stable space--time noise, including noises with non--symmetric tails. The construction uses Picard
	iteration for equations with truncated noise and then pastes their solutions up to stopping times determined by large jumps. Chong--Dalang--Humeau \cite{CDH} study the path properties of the solution to the SHE driven by general pure-jump L\'evy noise, including noise with infinite variance. On a bounded interval, they establish modifications with $\H_{-r}$-valued c\`adl\`ag paths
	for every $r>1/2$, and investigate spatial regularity at fixed times and temporal regularity at fixed spatial points.
	
	The closest result is due to Wang--Yan--Zhou \cite{WYZ}, who prove weak existence for the SHE on a bounded interval with zero drift, a continuous noise coefficient of at most linear growth, and truncated $\alpha$-stable noise. Their results are formulated in spaces of measures and functions. Here we construct weak solutions with $\H_{-r}$-valued c\`adl\`ag paths for every $r>1/2$, where $\H_{-r}$ is a negative fractional Sobolev space. We treat the full symmetric $\alpha$-stable noise and allow both the drift and noise coefficients to be continuous with at most linear growth. We also construct a predictable mild solution with the same distribution-valued state as the corresponding weak solution.
	
	We prove weak existence of a solution for the full symmetric stable noise by combining compactness in negative Sobolev spaces with convergence of the space--time fields. This allows us to remove the noise cutoff and pass from $b_n$ and $\sigma_n$ to $b$ and $\sigma$, without requiring
	uniform moment bounds for the full noise. Our approximation arguments share some initial ideas with Wang--Yan--Zhou \cite{WYZ}, while our tightness arguments are related to those of Chong--Delerue \cite{ChongDelerue20}. To identify the limit, we develop complex exponential martingale identities and reconstruct the noise using ideas similar to those of Mytnik \cite{Mytnik02}.
	
	\subsection{Setting and solution concepts}
	
	Let $T>0$, $D=(0,1)$, and $\calZ_t=(0,t]\times D$ for
	$t\in[0,T]$. On a stochastic basis
	$\bbB=(\Omega,\Fil,(\Fil_t)_{t\ge0},\P)$ satisfying the usual conditions,
	we study the SHE
	\begin{equation}\label{eq:SHE}
		\partial_tu=\tfrac12\partial_x^2u+b(u)+\sigma(u)\,\dot L
		\quad\text{on }[0,T]\times(0,1),\qquad u(t,0)=u(t,1)=0,
	\end{equation}
	with deterministic initial condition $u(0,\cdot)=u_0\in \L^\infty(D)$.
	Whenever $u_0\in\L^\infty(D)$, we use a bounded Borel representative satisfying
	$|u_0(x)|\le\|u_0\|_{\L^\infty(D)}$ for every $x\in D$.
	The coefficients $b,\sigma:\R\to\R$ are continuous functions with
	at most linear growth: for some constant $\Cgrow>0$,
	\begin{equation}\label{eq:linear-growth}
		|b(\xi)|+|\sigma(\xi)|\le\Cgrow(1+|\xi|),\qquad \xi\in\R.
	\end{equation}
	The forcing $\dot L$ denotes \emph{symmetric $\alpha$-stable space--time
		L\'evy white noise}, where $1<\alpha<2$. To describe it, let $N$ be an
	$(\Fil_t)$-Poisson random measure on
	$(0,\infty)\times D\times(\R\setminus\{0\})$ with intensity
	\[
	\m(\dd t,\dd x,\dd z)=\dd t\,\dd x\,\nu_\alpha(\dd z),\qquad
	\nu_\alpha(\dd z)=c_\alpha|z|^{-1-\alpha}\dd z,
	\quad c_\alpha>0,
	\]
	and write $\Nt=N-\m$ for its compensated
	version. The associated independently scattered random measure $L$ has
	the \emph{L\'evy--It\^o decomposition}
	\begin{equation}\label{eq:Levy-Ito}
		L(\dd t,\dd x)
		=\int_{0<|z|\le1}z\,\Nt(\dd t,\dd x,\dd z)
		+\int_{|z|>1}z\,N(\dd t,\dd x,\dd z).
	\end{equation}
	
	The noise $L$ is a \emph{homogeneous L\'evy basis} with zero drift and zero Gaussian
	part and jump intensity $\m$, satisfying the structural assumptions
	(1)--(7) in \cite{ChongVolterra}*{Section~2} with respect to the filtration
	$(\Fil_t)$ and the space--time predictable $\sigma$-field defined below.
	Let $\calP_0(\bbB)$ be the predictable $\sigma$-field of
	$(\Fil_t)$ on $\Omega\times[0,T]$, and let
	$\calP(\bbB):=\calP_0(\bbB)\otimes\calB(D)$.
	We call $\calP(\bbB)$ the predictable $\sigma$-field on
	$\Omega\times[0,T]\times D$. As usual, we also write
	$H\in\calP(\bbB)$ when a real-valued field $H$ on
	$\Omega\times[0,T]\times D$ is $\calP(\bbB)$-measurable.
	We use this notation for each stochastic basis; in particular,
	$\calP(\widetilde{\bbB})$ denotes the predictable fields on
	the final basis $\widetilde{\bbB}$ constructed in Section \ref{sec:main-proof}.
	
	Throughout, we write $\bbB=(\Omega,\Fil,(\Fil_t)_{t\ge0},\P)$
	for a stochastic basis: $(\Omega,\Fil,\P)$ is a probability space
	and $(\Fil_t)_{t\ge0}$ is a filtration satisfying the usual conditions.
	We write $\calB(E)$ for the Borel $\sigma$-field generated by
	the topology on $E$ understood from the context, in particular the
	usual Euclidean topology or the norm topology when $E$ is a normed space.
	
	For a measure space $(E,\calA,\mu)$ and $1\le p\le\infty$,
	we write $\L^p(E,\mu)$ for the usual real-valued Lebesgue space, with norm
	$\|\cdot\|_{\L^p(E,\mu)}$, omitting $\calA$ when understood.
	For a separable Banach space $V$, we write $\L^p(E,\mu;V)$
	for the space of measurable maps $f:E\to V$, identified up to
	$\mu$-a.e. equality, such that $\|f(\cdot)\|_V\in\L^p(E,\mu)$.
	If $\mu$ is random, this definition is applied separately for each $\omega$.
	
	We simplify the notation for the spaces used throughout the paper:
	on a Euclidean domain $E$, we write
	$\L^p(E):=\L^p(E,\operatorname{Leb})$, where $\operatorname{Leb}$
	denotes Lebesgue measure on $E$; in particular,
	$\L^p(D)$ and $\L^p((0,T)\times D)$
	(also denoted by $\L^p(\calZ_T)$);
	for random variables, we write
	$\L^p(\Omega):=\L^p(\Omega,\P)$.
	The same notation is used for complex-valued variables when needed.
	Finally, $\L^0(\Omega):=\L^0(\Omega,\Fil,\P)$ denotes
	the finite measurable random variables modulo $\P$-almost-sure
	equality. We use the notation
	\[
	\|X\|_{\L^0(\Omega)}:=\E[|X|\wedge1].
	\]
	The associated metric $d(X,Y):=\|X-Y\|_{\L^0(\Omega)}$
	is complete and induces convergence in probability.
	
	We use the same notation on the probability spaces
	$(\overline\Omega,\overline\Fil,\overline\P)$ and
	$(\widetilde\Omega,\widetilde\Fil,\widetilde\P)$ introduced in
	Sections~\ref{subsec:joint-convergence}
	and~\ref{subsec:weighted-stochastic-integration}.
	
	We follow the Daniell approach to stochastic integration developed by
	Bichteler, in the random-measure framework of Bichteler--Jacod
	\cite{BichtelerJacod83}. We use the notation of
	\cite{ChongKlueppelberg}*{Section~2}.
	Let $\calS_L$ denote the bounded simple
	$\calP(\bbB)$-measurable integrands on $\calZ_T$,
	whose integrals are defined by linearity from the L\'evy basis;
	see \cite{CDH}*{Appendix~A}.
	For $H\in\calP(\bbB)$, the \emph{Daniell mean} is
	\[
	\|H\|_L
	:=\sup_{\substack{S\in\calS_L\\ |S|\le |H|}}
	\left\|\int_{\calZ_T}S(s,y)L(\dd s,\dd y)\right\|_{\L^0(\Omega)}.
	\]
	We call $H$ \emph{$\dot L$-integrable} if some sequence $S_n\in\calS_L$ satisfies
	$\|H-S_n\|_L\to0$; its integral is then the limit in probability
	of the integrals of $S_n$. For $\dot L$-integrable $H$, we use the c\`adl\`ag
	version of $t\mapsto\int_{\calZ_t}H(s,y)L(\dd s,\dd y)$. A general criterion for $\dot L$-integrability is given in
	\cite{ChongKlueppelberg}*{Theorem~4.1 and Remark~4.4}.
	For the symmetric $\alpha$-stable noise $L$ considered here,
	a predictable field $H$ is $\dot L$-integrable if and only if
	\[
	\int_{\calZ_T}|H(s,y)|^\alpha\dd s\dd y<\infty,
	\qquad\P\text{-a.s.}
	\]
	Balan \cite{Balan14}*{Sections~2 and~4} gives a detailed construction of $\alpha$-stable L\'evy noise and develops its stochastic integration theory for random fields.
	
	The family
	$\scrL=\{\scrL(A):A\in\calB([0,T]\times D)\}$,
	defined by
	\[
	\scrL(A):=\int_{\calZ_T}\mathbf1_A(s,y)L(\dd s,\dd y),
	\]
	is an independently scattered symmetric $\alpha$-stable random
	measure on $[0,T]\times D$ in the sense of Rajput and Rosi\'nski \cite{RR89}. The law of $\scrL(A)$ is characterized by
	\begin{equation}\label{charact-L}
		\E[e^{i\theta  \scrL(A)}]=\exp\bigl(-\operatorname{Leb}(A)|\theta|^\alpha\bigr),
		\qquad \theta\in\R.
	\end{equation}
	Thus $\scrL$ is a symmetric $\alpha$-stable (S$\alpha$S) random measure with Lebesgue control measure, in the sense of \cite{ST94}*{Definition~3.3.1}.
	The rate of jumps larger than $R$ in absolute value, per unit time
	and space, is
	\[
	\nu_\alpha(\{|z|>R\})=\frac{2c_\alpha}{\alpha}R^{-\alpha},
	\qquad R>0,
	\qquad c_\alpha:=\frac{\Gamma(1+\alpha)\sin(\pi\alpha/2)}{\pi},
	\]
	with $c_\alpha$ corresponding to the normalization above.
	For $b=0$ and $\sigma(u)=u$, Balan and Jim\'enez \cite{BJseries}
	studied the SHE on $\R^d$ driven by symmetric
	$\alpha$-stable space--time L\'evy noise and obtained series expansions
	of mild solutions.
	
	Let $e_k(x)=\sqrt2\sin(k\pi x)$ and $\lambda_k=k^2\pi^2/2$,
	$k\ge1$, be the eigenfunctions and eigenvalues of
	$-\tfrac12\partial_x^2$ with Dirichlet boundary conditions. We write $\varphi''$ for the second spatial derivative $\partial_x^2\varphi$. Set
	$\calE=\operatorname{span}\{e_k:k\ge1\}$. For $r\in\R$, the
	Dirichlet fractional Sobolev space $\H_r$ is the completion of
	$\calE$ under the norm
	\[
	\|v\|_{\H_r}^2=\sum_{k\ge1}\lambda_k^r|\langle v,e_k\rangle|^2.
	\]
	In particular, $\H_0=\L^2(D)$ and $\H_{-r}$ is the dual of $\H_r$
	with respect to the $\L^2(D)$ pairing.
	
	The associated Dirichlet heat kernel has the spectral representation
	\begin{equation}\label{eq:heat-kernel}
		\rho_t(x,y)=\sum_{k\ge1}e^{-\lambda_kt}e_k(x)e_k(y),
		\qquad t>0,\quad x,y\in D.
	\end{equation}
	The corresponding heat semigroup on $\L^2(D)$ is denoted by $(P_t)_{t\ge0}$, where
	\[
	P_tf(x)=\int_D\rho_t(x,y)f(y)\dd y,\qquad t>0,
	\qquad P_0f=f.
	\]
	For $v\in \H_r$, the same semigroup is defined by
	$P_tv=\sum_{k\ge1}e^{-\lambda_kt}\langle v,e_k\rangle e_k$;
	this series defines a smooth function on $D$ for $t>0$.
	We impose the zero boundary condition by using the Dirichlet heat
	semigroup $P_t$.
	
	For $r\in\R$, let $\Path{\H_r}$ denote the space of functions from
	$[0,T]$ to $\H_r$ that are right-continuous and have left limits,
	equipped with the Skorokhod $\mathrm{J}_1$ topology.
	
	We follow the approach of \cite{Hughes}*{Definition~1.1 and Remark~1.2}
	by distinguishing a predictable field from a state process,
	here taking values in $\H_r$, and identifying them through test functions.
	The following definitions are stated on a given stochastic basis; in the
	existence theorem, the basis and noise are part of the construction;
	see also \cite{WYZ}*{Definition~2.2}.
	
	\pagebreak[3]
	\begin{definition}[Weak solution in $\H_r$]\label{def:weak-solution}
		A field $u\in\calP(\bbB)$ is a weak solution of \eqref{eq:SHE}
		in $\H_r$, for $r\in\R$ and deterministic initial state $u_0\in \H_r$,
		if the following conditions hold on a stochastic basis $\bbB$
		carrying the noise $L$ of \eqref{eq:Levy-Ito}.
		\begin{enumerate}[label=(\roman*)]
			\item There exists an adapted process $(u(t))_{t\in[0,T]}$ with
			$(u(t))_{t\in[0,T]}\in\Path{\H_r}$ and $u(0)=u_0$, $\P$-a.s., such that
			\[
			\P\!\left(
			\langle u(t),\varphi\rangle
			=\langle u(t,\cdot),\varphi\rangle
			\right)=1,
			\qquad t\in(0,T],\quad \varphi\in\calE.
			\]
			
			\item For every $\varphi\in\calE$, with probability one, the
			following identity holds for all $t\in[0,T]$:
			\begingroup
			\medmuskip=3mu plus 1mu minus 1mu
			\begin{equation}\label{eq:weak-form}
				\langle u(t),\varphi\rangle
				= \langle u_0,\varphi\rangle
				+\tfrac12\int_0^t\langle u(s),\varphi''\rangle\dd s
				+\int_{\calZ_t} b(u(s,y))\varphi(y)\dd s\dd y
				+\int_{\calZ_t}\varphi(y)\sigma(u(s,y))L(\dd s,\dd y).
			\end{equation}
			\endgroup
			\item The field satisfies
			\begin{equation}\label{eq:weak-integrability}
				\int_{\calZ_T}
				\bigl(|u(s,y)|+|b(u(s,y))|+|\sigma(u(s,y))|^\alpha\bigr)\dd s\dd y<\infty,
				\qquad \P\text{-a.s.}
			\end{equation}
		\end{enumerate}
	\end{definition}

	Here $u(t,x)$ denotes the predictable field $u\in\calP(\bbB)$, and $u(t)$ is the $\H_r$-valued state. The notation $u(t,x)$ refers to the jointly measurable map $(\omega,t,x)\mapsto u(\omega,t,x)$, rather than to pointwise
	evaluation of the distribution $u(t)$ at $x$. In (i), $\langle u(t,\cdot),\varphi\rangle$ denotes the finite Lebesgue integral $\int_D u(t,x)\varphi(x)\dd x$. For each $t\in(0,T]$ and $\varphi\in\calE$, this integral equals the dual pairing $\langle u(t),\varphi\rangle$ on an event of probability one that may depend on $t$ and $\varphi$. This identification alone does not imply that the field $u$ satisfies the mild formulation. On the other hand, since $\varphi,\varphi''\in\calE
	\subset \H_{-r}$ and $u$ has c\`adl\`ag state paths, the first time integral in \eqref{eq:weak-form} is well defined. Condition~\eqref{eq:weak-integrability} ensures the same for the drift and stochastic integrals.
	
	We write $\operatorname{Leb}\text{-a.e.}$ and
	$(\P\otimes\operatorname{Leb})\text{-a.e.}$ for almost everywhere
	with respect to Lebesgue measure and its product with $\P$, respectively.
	For jointly measurable fields $X$ and $Y$ on the same stochastic basis,
	$Y$ is a \emph{modification} of $X$ if
	\[
	\P\bigl(Y(t,x)=X(t,x)\bigr)=1
	\qquad\text{for every fixed }(t,x)\in[0,T]\times D.
	\]
	In this paper, we call $Y$ a \emph{weak modification} of $X$ if
	\[
	\P\!\left(Y(t,x)=X(t,x)
	\ \text{for }\operatorname{Leb}\text{-a.e. }(t,x)\in\calZ_T\right)=1.
	\]
	Equivalently, $Y=X$, $(\P\otimes\operatorname{Leb})\text{-a.e.}$
	on $\Omega\times\calZ_T$.

	\begin{definition}[Mild solution]\label{def:mild-solution}
		A predictable field $u\in\calP(\bbB)$ is a mild solution of
		\eqref{eq:SHE} if, for each fixed
		$(t,x)\in(0,T]\times D$, the integrals below are well defined and
		\begin{equation}\label{eq:mild}
			u(t,x)=P_tu_0(x)
			+\int_{\calZ_t}\rho_{t-s}(x,y)b(u(s,y))\dd s\dd y
			+\int_{\calZ_t}\rho_{t-s}(x,y)\sigma(u(s,y))L(\dd s,\dd y),
			\quad\P\text{-a.s.}
		\end{equation}
		We call a field $v \in \calP (\bbB)$ a density of an $\H_{-r}$-valued
		process $\widetilde u$ if, for each fixed $t\in(0,T]$,
		$\langle\widetilde u(t),\varphi\rangle
		=\int_D v(t,x)\varphi(x)\dd x$ for all $\varphi\in\calE$,
		$\P$-a.s. If $v$ also satisfies the mild
		formulation, we call it a mild density of $\widetilde u$.
	\end{definition}
	
	\subsection{Main results and proof strategy}
	
	The usual existence results for SPDEs start with a given stochastic
	basis $\bbB$ and driving noise $L$, from which a solution is
	constructed, for example by Picard iteration when the coefficients
	are globally Lipschitz. Our proof works the other way around:
	we first use tightness and Skorokhod's representation theorem to obtain
	a limit on a new probability space, where the stochastic basis and
	driving noise have not yet been defined. We then construct a stochastic
	basis and a driving noise with respect to which this limit satisfies
	the SHE.
	
	The following theorem gives 
	weak existence for the full symmetric stable
	noise under the assumptions stated above.
	
	\begin{theorem}\label{thm:weak-existence}
		Let $1<\alpha<2$, $u_0\in \L^\infty(D)$ be deterministic, and
		$b,\sigma:\R\to\R$ be continuous and satisfy \eqref{eq:linear-growth}.
		For every $r>\tfrac12$, there exists a weak solution $\widetilde u$
		in $\H_{-r}$ of \eqref{eq:SHE}, defined on a stochastic basis
		$\widetilde{\bbB}$ satisfying the usual conditions and carrying
		a symmetric $\alpha$-stable space--time L\'evy basis $\widetilde L$
		as in \eqref{eq:Levy-Ito}, in the sense of
		Definition~\ref{def:weak-solution}.
	\end{theorem}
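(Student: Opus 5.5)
The plan is an approximation--compactness--identification scheme. The solution is built as the limit of solutions to regularized equations.

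\emph{Step 1: approximation.} Fix $r>\tfrac12$. Choose globally Lipschitz $b_n,\sigma_n$ that converge to $b,\sigma$ locally uniformly and satisfy \eqref{eq:linear-growth} with a constant independent of $n$; mollification works. Truncate the noise to jumps of size at most $n$, writing $L_n$ for the truncated basis. On the original basis, Picard iteration (as in \cite{Balan14}) gives a predictable mild solution $u_n$ driven by $L_n$ with coefficients $b_n,\sigma_n$. It has finite $p$-th moments for $p<\alpha$ uniformly in $n$ only after localization. To avoid uniform moment bounds for the full noise, I will instead use the stopping times $\tau_K$, defined as the first time a jump of size larger than $K$ occurs. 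On $\{t<\tau_K\}$ the equation sees only $L^K$. For $p\in(1,\alpha)$ the bounded-jump noise gives estimates of the form $\sup_{t,x}\E[|u_n(t\wedge\tau_K,x)|^p]\le C_K$ uniformly in $n$. These follow from Burkholder/BDG-type bounds for Poisson integrals, with $\int\rho^p_{t-s}<\infty$ because $p<3$, and Gronwall. Since $\P(\tau_K\le T)\to0$ as $K\to\infty$, all estimates hold in probability uniformly in $n$.

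\emph{Step 2: tightness.} The $\H_{-r}$-valued states $u_n(t)=P_tu_0+\int P_{t-s}b_n\,ds+\int P_{t-s}\sigma_n\,dL$ are tight in $\Path{\H_{-r}}$. I will follow Chong--Delerue \cite{ChongDelerue20}. Compact containment comes from bounds in $\H_{-r'}$ with $\tfrac12<r'<r$, since the embedding $\H_{-r'}\hookrightarrow\H_{-r}$ is compact. For Aldous' criterion I control the increments $\langle u_n(\tau+\delta)-u_n(\tau),e_k\rangle$ in the weak form, using $\|e_k\|_\infty$ bounded and summability of $\lambda_k^{-r}$ for $r>\tfrac12$. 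I also need tightness of the fields $u_n$ as random elements of $\L^p(\calZ_T)$ for some $p\in[1,\alpha)$, via a fractional time-space Sobolev (Kolmogorov-type compactness) bound on the localized fields, and tightness of the noises $L_n$ (for instance as random distributions or through $\scrL_n(A)$ on a countable generating class). Skorokhod's theorem then gives a new space $(\overline\Omega,\overline\Fil,\overline\P)$ on which $(\bar u_n(\cdot),\bar u_n(\cdot,\cdot),\bar L_n)\to(\bar u(\cdot),\bar u(\cdot,\cdot),\bar L)$ almost surely. Continuity of $b,\sigma$, uniform convergence of $b_n\to b$, and convergence of the fields in $\L^p$ give $b_n(\bar u_n)\to b(\bar u)$ and $\sigma_n(\bar u_n)\to\sigma(\bar u)$ in $\L^\alpha$ in probability, via Vitali and localization.

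\emph{Step 3: identification (the main obstacle).} Stochastic integrals do not pass to the limit directly. Instead I take $\widetilde\Fil_t$ to be the usual augmentation of the filtration generated by $\bar u$, the field restricted to $[0,t]$, and $\bar L$. Define
\[
M^\varphi_t:=\langle\bar u(t),\varphi\rangle-\langle u_0,\varphi\rangle-\tfrac12\int_0^t\langle\bar u(s),\varphi''\rangle\dd s-\int_{\calZ_t}b(\bar u)\varphi\dd s\dd y .
\]
For the approximations, complex exponential martingale identities hold: processes such as
\[
\exp\Bigl(i\theta M^{\varphi,n}_t+i\eta\,\scrL_n(\cdot)+\int_0^t\!\!\int_D\Psi_n\bigl(\theta\varphi\sigma_n(u_n)+\eta\mathbf1\bigr)\dd s\dd y\Bigr)
\]
are martingales, where $\Psi_n$ is the truncated stable exponent. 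These identities are expressed through bounded functionals, so they pass to the limit and show that $(M^\varphi,\bar L)$ has the joint characteristics of $\int\varphi\sigma(\bar u)\,d\bar L$ together with an $\alpha$-stable basis. Two things remain: showing $\bar L$ is an $(\widetilde\Fil_t)$-L\'evy basis, which follows from independence of future increments via the same identities, and showing that $M^\varphi_t=\int_{\calZ_t}\varphi\sigma(\bar u)\,d\bar L$. For the second point I will compute $\E|M^\varphi-\int\varphi\sigma(\bar u)d\bar L|$-type quantities through characteristic functions of differences. Where $\sigma(\bar u)$ vanishes the noise is not recoverable from $\bar u$, so I will reconstruct the noise as in Mytnik \cite{Mytnik02}. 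This means enlarging the space with an independent S$\alpha$S basis $L'$ and setting $\widetilde L=\int \sigma(\bar u)^{-1}\mathbf1_{\sigma\neq0}\,dM+\int\mathbf1_{\sigma(\bar u)=0}\,dL'$, where $M$ is the martingale measure built from the $M^\varphi$; this is likely what the paper actually does.

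\emph{Step 4.} Finally I need a predictable version of the field $\bar u$ agreeing with the state, which gives Definition~\ref{def:weak-solution}(i). I will take this from $\L^p$ limits of the mild representations, or from the predictable projection. Integrability \eqref{eq:weak-integrability} follows from the localized moment bounds, Fatou, and $\tau_K\to\infty$. The hardest part is Step 3, namely proving that the limit martingale is a stochastic integral against a genuine stable basis without using moments of order $\ge\alpha$.
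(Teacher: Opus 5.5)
Your architecture matches the paper's: mollified coefficients with $n$-uniform linear growth, localization at $\tauK$, joint tightness of the state in $\Path{\H_{-r}}$ and of the field in an $\L^p$ space, Skorokhod, complex exponential martingales with clock $|\theta|^\alpha\int|\sigma\varphi|^\alpha$, and Mytnik's reconstruction with an independent S$\alpha$S noise on $\{\sigma(\bar u)=0\}$. Once you reconstruct, carrying $\bar L_n$ through Skorokhod is superfluous.

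\textbf{First gap: the moment order.} In Step~1 you take $p<\alpha$, which is the wrong side of $\alpha$. For jumps bounded by $K$, the compensated Poisson bound is $\E|\int H\dd\LK|^p\lesssim m_p(K)\,\E\int_{\calZ_T}|H|^p$ with $m_p(K)=\int_{0<|z|\le K}|z|^p\nu_\alpha(\dd z)$. This constant is infinite for $p\le\alpha$ because of the small jumps. Going through $\L^2(\Omega)$ instead requires second moments of $H$, so your Gronwall argument does not close for $p<\alpha$. Even if you had such bounds, they are too weak later on, since $|\sigma(\xi)|^\alpha$ may grow like $|\xi|^\alpha$:
\begin{enumerate}
\item The Vitali step giving $|\sigma_n(\bar u_n)\varphi|^\alpha\to|\sigma(\bar u)\varphi|^\alpha$ in $\L^1(\calZ_T)$, i.e.\ convergence of the clock, needs $n$-uniform tail control of $\int_{\calZ_T}|\bar u_n|^{q'}$ for some $q'>\alpha$.
\item The same control is needed for $\int|\sigma(\bar u)|^\alpha<\infty$ in \eqref{eq:weak-integrability}; without it $\int\varphi\sigma(\bar u)\dd\widetilde L$ is not defined.
\item Compact containment via $\sum_k\lambda_k^{-r'p/2}$ needs $r'p>1$, which rules out $r\in(\tfrac12,\tfrac1\alpha]$. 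Your appeal to summability of $\lambda_k^{-r}$ is really a second-moment argument.
\end{enumerate}
The fix is that $m_q(K)<\infty$ for every $q\in(\alpha,2]$ and $\int_D\rho_t(x,y)^q\dd y\lesssim t^{-(q-1)/2}$. Together these give $n$-uniform bounds of order $q\in(\alpha,2]$, as in Lemma~\ref{lem1}. Then use $q\in(\max\{\alpha,1/r\},2)$ for tightness and $q'=2$ for the tails.

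\textbf{Second gap: fixed-time identification.} Step~4 does not deliver Definition~\ref{def:weak-solution}(i). That condition requires agreement of state and field at \emph{each fixed} $t$, whereas the Skorokhod limit gives it only for a.e.\ $t$. A predictable projection or an $\L^p$ limit does not control individual times. The mild-representation route is circular, because the mild stochastic integral already needs a predictable integrand. In fact the predictable version is needed before Step~3, since $\mathbf 1_{\{\sigma(\bar u)\ne0\}}/\sigma(\bar u)$ must be a predictable integrand against $M$. The missing ingredients are:
\begin{enumerate}
\item a predictable field built from the left-limit state, $\bar w(t,y)=\lim_n\fraks_n(\widetilde u(t-))(y)$, as in Lemma~\ref{lem:predictable-representative};
\item once the jump measure of $\widetilde L$ is identified as a Poisson random measure with intensity $\m$, the weak equation forces $\widetilde u(t-)=\widetilde u(t)$ a.s.\ at each fixed $t$;
\item $\widetilde u(t)\in\L^2(D)$ a.s., obtained from fixed-time $\L^2$ bounds on $\{\tauK>T\}$, equality in law, Portmanteau, and closedness of $\L^2(D)$-balls in $\H_{-r}$; after this, the Fej\'er means converge a.e.\ to the density.
\end{enumerate}

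Finally, the phrase \emph{bounded functionals} in Step~3 needs to be made precise. The stopped exponentials do not pass to the limit, because the stopping times at level $R$ need not converge along $n$. Use the smooth cutoff $f(A_n^\varphi)$ of \eqref{eq:smooth-cutoff-martingale} instead.
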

	
	The next theorem identifies a predictable mild density of the
	solution constructed in Theorem~\ref{thm:weak-existence}.
	\begin{theorem}\label{thm:mild-existence}
		Under the assumptions of Theorem~\ref{thm:weak-existence}, fix $r>1/2$
		and let $\widetilde u$ be the weak solution constructed there.
		Then $\widetilde u$ has a mild density $v\in\calP(\widetilde{\bbB})$ on the same basis $\widetilde{\bbB}$ and driven by the same noise $\widetilde L$.
	\end{theorem}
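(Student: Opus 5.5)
We construct $v$ directly from the predictable field $\widetilde u$ of Theorem~\ref{thm:weak-existence}: its right-hand side in the mild formulation will serve as the density. Set
\[
v(t,x):=P_tu_0(x)+\int_{\calZ_t}\rho_{t-s}(x,y)b(\widetilde u(s,y))\dd s\dd y+\int_{\calZ_t}\rho_{t-s}(x,y)\sigma(\widetilde u(s,y))\widetilde L(\dd s,\dd y).
\]
Three things then have to be checked. First, $v$ has a $\calP(\widetilde\bbB)$-measurable version. Second, $v$ is a density of the state $\widetilde u(t)$. Third, $v=\widetilde u$ $(\widetilde\P\otimes\operatorname{Leb})$-a.e. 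The third point allows us to replace $\widetilde u$ by $v$ inside the coefficients, so that $v$ satisfies \eqref{eq:mild} with $v$ itself in $b$ and $\sigma$.

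\textbf{Well-definedness and predictability.} The stochastic integral exists when $\int_{\calZ_t}\rho_{t-s}(x,y)^\alpha|\sigma(\widetilde u(s,y))|^\alpha\dd s\dd y<\infty$ a.s. Since $\rho_{t-s}(x,\cdot)$ is unbounded as $s\uparrow t$, the a.s. integrability \eqref{eq:weak-integrability} alone is not enough. We therefore use the specific construction: $\widetilde u$ is a limit of approximations with $\alpha$-moment control on truncated pieces. After localizing by stopping times at large jumps, as in Balan's pasting scheme, we obtain bounds of the form $\sup_{s,y}\widetilde\E[|\widetilde u(s,y)|^p\wedge\cdots]$ for some $p<\alpha$. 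Combined with $\int_0^t\!\int_D\rho_{t-s}(x,y)^\alpha\dd y\dd s\lesssim\int_0^t(t-s)^{-(\alpha-1)/2}\dd s<\infty$ for $\alpha<3$, this yields finiteness for every fixed $(t,x)$. Measurability in $(\omega,t,x)$ and a predictable version then follow from the stochastic Fubini/measurable-version results for Daniell integrals in \cite{ChongKlueppelberg}. A predictable modification is obtained by the standard approximation of $s\mapsto\rho_{t-s}$, for example by $t-\varepsilon$ shifts.

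\textbf{Density property.} Fix $t$ and $\varphi\in\calE$. By stochastic Fubini,
\[
\int_D v(t,x)\varphi(x)\dd x=\langle u_0,P_t\varphi\rangle+\int_{\calZ_t}P_{t-s}\varphi(y)\,b(\widetilde u)\dd s\dd y+\int_{\calZ_t}P_{t-s}\varphi(y)\,\sigma(\widetilde u)\widetilde L(\dd s,\dd y).
\]
For $\varphi=e_k$ we have $P_{t-s}e_k=e^{-\lambda_k(t-s)}e_k$. Applying It\^o's product rule to $e^{\lambda_ks}\langle\widetilde u(s),e_k\rangle$, using the weak form \eqref{eq:weak-form} together with $e_k''=-2\lambda_ke_k$, gives exactly $\langle\widetilde u(t),e_k\rangle$ (variation of constants). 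Linearity extends this to all of $\calE$, so $v$ is a density of $\widetilde u(t)$ for each fixed $t$.

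\textbf{Identification with the field.} By condition (i) of Definition~\ref{def:weak-solution}, $\widetilde u(t,\cdot)$ also has the same pairings with every $e_k$, a.s. for each $t$. Since $\{e_k\}$ is complete in $\L^2(D)$, and both $\widetilde u(t,\cdot)$ and $v(t,\cdot)$ are in $\L^1(D)$ and determined by their pairings with the $e_k$ (dense in $C_0$), we get $v(t,\cdot)=\widetilde u(t,\cdot)$ Leb-a.e. for each $t$, a.s. Fubini in $t$ then gives $v=\widetilde u$ $(\widetilde\P\otimes\operatorname{Leb})$-a.e. Consequently the integrands $b(\widetilde u)$ and $\sigma(\widetilde u)$ may be replaced by $b(v)$ and $\sigma(v)$. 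This uses the fact that the Daniell integral only depends on the integrand up to $\P\otimes\operatorname{Leb}$-null sets, since the control measure of $\widetilde L$ is Lebesgue. Hence $v$ is a mild density.

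The main obstacle is the first step: proving pointwise $\alpha$-integrability of $\rho_{t-s}(x,\cdot)\sigma(\widetilde u)$ near the diagonal without global moments of the full noise. This is where the uniform-in-$(s,y)$ localized moment bounds inherited from the approximating sequence, and their preservation under the Skorokhod limit via Fatou, must be used.
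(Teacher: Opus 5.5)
Your architecture matches the paper's. You take $v=\calT(\widetilde u)$, prove $\calV(\widetilde u)(t,x)<\infty$ from the approximations, pick a predictable version, show $v(t,\cdot)$ is a density of $\widetilde u(t)$, and then substitute $v$ for $\widetilde u$ using $v=\widetilde u$ $(\widetilde\P\otimes\operatorname{Leb})$-a.e. Your mode-wise variation of constants is Step~1 of Lemma~\ref{lem:regularized-mild-identity}.

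The gap is the step ``By stochastic Fubini''. There you exchange $\int_D\cdot\,\varphi(x)\dd x$ with the $\widetilde L$-integral of $\rho_{t-s}(x,y)\sigma(\widetilde u(s,y))$ on $\widetilde\Omega$, where no moment bounds for $\widetilde u$ are available. The Fubini theorem the paper uses for $u_n^K$ (Chong--Dalang--Humeau, Theorem~A.3(1) with $p=2$) rests on second moments. Localizing at large jumps does not recover them, because $\tau_K$ is not a functional of $u_n$ and does not pass through the Skorokhod representation. An $\L^0$ version of Fubini would need a hypothesis controlling the $x$-integral. For instance, you would need a version of $v(t,x)$ jointly measurable in $(\omega,x)$ together with $\widetilde\P$-a.s.\ finiteness of
\[
\int_D|\varphi(x)|\Bigl(\int_{\calZ_t}\rho_{t-s}(x,y)^\alpha|\sigma(\widetilde u(s,y))|^\alpha\dd s\dd y\Bigr)^{1/\alpha}\dd x .
\]
Finiteness of $\calV(\widetilde u)(t,x)$ at each fixed $x$, off an $x$-dependent null set, does not imply this. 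Your identification step also uses $v(t,\cdot)\in\L^1(D)$, which is the same missing statement.

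The paper is built to avoid this interchange:
\begin{itemize}
\item It sums finitely many mode identities against $e^{-\lambda_k\varepsilon}e_k(x)$, which is exact.
\item It lets $m\to\infty$ at fixed $(t,x)$ using $d_{m,\varepsilon}$ and \eqref{eq:elementary-integral-continuity} (Lemma~\ref{lem:regularized-mild-identity}).
\item It removes $\varepsilon$ at fixed $(t,x)$ by a second tail bound transferred from $u_n^K$ (Lemma~\ref{lem:remove-mild-regularization}).
\item It identifies $v(t,\cdot)$ through $\widetilde u(t)\in\L^2(D)$ and $P_{1/j}\widetilde u(t)\to\widetilde u(t,\cdot)$ in $\L^2(D)$, via convergence in $\L^0(\widetilde\Omega\times D)$. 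No $x$-integrability of $v(t,\cdot)$ is needed beforehand.
\end{itemize}
To keep your route, you would first prove the Minkowski-type bound above by the same transfer argument. That functional is lower semicontinuous on $\calL$, and Jensen together with Lemma~\ref{lem1} bounds its expectation for $u_n^K$. You would then still need an $\L^0$ stochastic Fubini theorem for $\widetilde L$ under that bound.

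Your sketch of the kernel integrability also needs correcting. Localized moments of order $p<\alpha$ cannot bound $\int_{\calZ_t}\rho_{t-s}(x,y)^\alpha|\sigma(\widetilde u)|^\alpha\dd s\dd y$. The truncation gives moments of order $q\in(\alpha,2]$, and the paper uses $q=2$. Moreover, neither $\tau_K$ nor pointwise values of an element of $\calL$ pass through the Skorokhod representation. So the localization must be stated on the original space as a tail bound $\P\bigl(\calV_n(u_n)(t,x)>R\bigr)\le\P(\tau_K\le T)+\calC_{6,1}(K)/R$ for a noise-free, lower semicontinuous functional. This bound is then transferred by equality in law and Fatou's lemma, as in Lemmas~\ref{lem:mild-functional-measurability} and~\ref{lem:mild-kernel-integrability}.
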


	\noindent\emph{Organization of the paper.} The proof of Theorem~\ref{thm:weak-existence} spans Sections~\ref{sec:preliminaries} and~\ref{sec:main-proof}. In Section~\ref{sec:preliminaries}, we approximate the coefficients by globally Lipschitz functions, truncate the noise, and obtain bounds that do not depend on the Lipschitz constants. We construct the full-noise approximation by pasting together solutions with truncated noise. We then prove joint tightness of the state paths and the space--time fields, and use the Skorokhod representation theorem to obtain an almost surely convergent sequence on a new probability space.

	In Section~\ref{sec:main-proof}, we identify the limit through exponential martingales, without using a second moment of the stable noise. We then reconstruct the driving noise from the limiting martingale identities, adapting the characteristic-function and division arguments of \cite{Mytnik02}*{Proposition~4.1, (4.2), and Lemma~5.7} to symmetric noise. In this way we do not need a representation with the same noise for all approximations, which cannot be guaranteed in general; see Ondrej\'at--Seidler \cite{OndrejatSeidler25}.
	
	In Section~\ref{sec:mild-representation}, we prove Theorem~\ref{thm:mild-existence}. We first use the truncated estimates to show that the heat-kernel integrals are finite at each point. We then derive a regularized mild identity from finite spectral sums and remove the regularization at each fixed space--time point.
	
	Appendix~\ref{app:constants} gives explicit constants for the estimates, and Appendix~\ref{app:predictable-representatives} constructs the predictable representative used throughout the proof.
	
	\medskip 
	\noindent \emph{Open problems and future directions.}
	Continuity and linear growth alone do not give uniqueness in general: already for $\sigma\equiv0$, \eqref{eq:SHE} with $b(z)=\sqrt{z_+}$ and $u_0=0$ has solutions other than $u\equiv0$, see \cite{LRS17}, and
	shifting such a solution in time gives infinitely many different solutions. In the Gaussian setting on $\R$, pathwise uniqueness holds among continuous solutions under linear growth assumptions when $b$ is globally Lipschitz and $\sigma$ is H\"older continuous of order greater than $3/4$; see \cite{MP11}*{Theorem~1.2}. For stable noise, the uniqueness problem is open even for globally Lipschitz $\sigma$ when the domain is unbounded. On $\R^d$, Chong \cite{Chong17} constructs a solution to the stochastic heat equation with globally Lipschitz $\sigma$, driven by heavy-tailed noise that includes $\alpha$-stable noise with  $\alpha < 1 +2/d$, but uniqueness is not known merely under the global Lipschitz condition; the solution obtained there is the limit of the solutions with truncated large jumps. On $\R^d$, large jumps occur in every time interval so the stopping-time argument that gives existence and uniqueness on bounded domains (see \cite{Balan14}, \cite{CDH}*{Proposition~2.1} and Section~\ref{subsec:full-noise-approximations}) is not available. A related question is strong existence: whether \eqref{eq:SHE} admits a solution determined by the given noise $L$ and initial condition on a given stochastic basis. The Yamada--Watanabe theorem of Kurtz \cite{Kurtz07}*{Theorem~3.14} would yield this from Theorem~\ref{thm:weak-existence} and pathwise uniqueness, once its hypotheses are verified for the solution constructed here.
	
	Another direction concerns coefficients with superlinear growth. In the Gaussian case, this problem has been widely studied: global solutions exist when $| b(u)| = O( |u|\log|u|)$ and $\sigma(u)=o(|u|(\log|u|)^{1/4})$ \cite{DKZ19}, and Osgood-type conditions on the drift have been used to study global existence and finite-time blow-up \cites{FN21,CFHS25}. These results rely on estimates of the solution in the supremum norm. For stable noise, each jump adds a Dirac mass to the state, so the supremum of the solution in space cannot be bounded uniformly in time. In addition, the moment bounds in Section~\ref{sec:preliminaries} use the linear growth condition~\eqref{eq:linear-growth}. We leave the extension of our existence result to coefficients with superlinear growth for future work.
	
	Our proof is restricted to one space dimension. For the noise with jumps
	truncated at size $K$, our second-moment estimates require the heat
	kernel to be square integrable in space and time, which holds only in dimension $d=1$. These second moments are used at several steps of the proof. In higher dimensions, the heat kernel remains $\alpha$-integrable when $\alpha < 1 + 2/d$, which is the condition under which Chong \cite{Chong17} constructs solutions on $\R^d$. In this case, one would have to work with moments of order $q$ with $\alpha < q < 1 + 2/d$ instead of second moments, and with negative Sobolev spaces of order larger than $d/2$, where Dirac masses live in dimension $d$. We leave the extension to higher spatial dimensions for future work.
	
	\section{Approximation and compactness}\label{sec:preliminaries}
	\label{subsec:proof-weak-existence}
	
	This section begins the proof of Theorem~\ref{thm:weak-existence}.
	We first construct Lipschitz approximations and derive estimates that
	do not depend on their Lipschitz constants. We then remove the noise
	cutoff and pass to a joint limit of the state paths and random fields.
	
	\subsection{Fixed-cutoff solutions and estimates}
	\label{subsec:fixed-cutoff}
	
	The estimates in this subsection are standard; we include them to
	track the dependence of the constants on $K$ and to show that they do
	not depend on $n$.
	
	For $K\ge1$, define
	\[
	\LK(\dd s,\dd y)
	:=\int_{0<|z|\le K}z\,\Nt(\dd s,\dd y,\dd z),
	\qquad
	m_q(K):=\int_{0<|z|\le K}|z|^q\nu_\alpha(\dd z)<\infty
	\quad(q>\alpha).
	\]
	As in \cite{CDH}*{Proposition~2.1}, define
	\[
	\tauK:=\inf\bigl\{t\in[0,T]:
	N\bigl([0,t]\times D\times[-K,K]^c\bigr)\ne0\bigr\},
	\qquad K\in\N,
	\]
	with $\inf\varnothing=\infty$. Then $(\tauK)_{K\ge1}$ is an increasing
	sequence of stopping times such that, $\P$-a.s., $\tauK>0$ for every
	$K\ge1$ and $\tauK=\infty$ for all sufficiently large $K$.
	
	Direct integration gives the truncated moments and the large-jump rate:
	\begin{equation}\label{eq:noise-constants}
		m_p(K)=\frac{2c_\alpha}{p-\alpha}K^{p-\alpha}\quad(p>\alpha),
		\qquad
		\nu_\alpha(\{|z|>K\})
		=\frac{2c_\alpha}{\alpha}K^{-\alpha}.
	\end{equation}
	By symmetry, $L$ and $\LK$ agree before $\tauK$. By the local property
	\cite{BichtelerJacod83}*{p.~5, (2.22)} and $\P(\tauK=t)=0$, for every
	fixed $t\in[0,T]$ and predictable field $H$ satisfying
	$\int_{\calZ_T}|H(s,y)|^\alpha\dd s\dd y<\infty$, $\P$-a.s., both
	stochastic integrals are defined and
	\begin{equation}\label{eq:noise-local-property}
		\mathbf1_{\{t\le\tauK\}}\int_{\calZ_t}H(s,y)L(\dd s,\dd y)
		=\mathbf1_{\{t\le\tauK\}}\int_{\calZ_t}H(s,y)\LK(\dd s,\dd y),
		\qquad\P\text{-a.s.}
	\end{equation}
	
	We next approximate the coefficients by globally Lipschitz functions.
	Fix a nonnegative $\varrho\in C_c^\infty((-1,1))$ with
	$\int_\R\varrho(\xi)\dd\xi=1$. For $n\ge1$, set
	\[
	\varrho_{1/n}(\xi):=n\varrho(n\xi),
	\qquad \pi_n(\xi):=\max\{-n,\min\{\xi,n\}\},
	\]
	and define, with convolution on $\R$,
	\begin{equation}\label{eq:approx-coefficients}
		b_n:=(b\circ \pi_n)*\varrho_{1/n},
		\qquad \sigma_n:=(\sigma\circ \pi_n)*\varrho_{1/n}.
	\end{equation}
	Continuity of $b$ and $\sigma$ gives $b_n\to b$ and $\sigma_n\to\sigma$
	locally uniformly. By \eqref{eq:linear-growth},
	\begin{equation}\label{eq:approx-growth}
		|b_n(\xi)|+|\sigma_n(\xi)|
		\le\Cgrow(2+|\xi|)\le2\Cgrow(1+|\xi|),
		\qquad \xi\in\R,\quad n\ge1.
	\end{equation}
	Since $b\circ\pi_n$ and $\sigma\circ\pi_n$ are bounded, their mollifications are globally
	Lipschitz, with a common Lipschitz constant
	\[
	\Lambda_n:=\Cgrow(1+n)n\|\varrho'\|_{\L^1(\R)}.
	\]
	Thus, the growth bound is independent of $n$, although $\Lambda_n$ may grow with $n$.

	For $n\ge1$ and $K\ge1$, we consider the truncated equation in mild form:
	for each fixed $(t,x)\in[0,T]\times D$,
	\begin{equation}\label{eq:mildK}
		\begin{aligned}
			u_n^K(t,x)
			&=P_tu_0(x)
			+\int_{\calZ_t}\rho_{t-s}(x,y)b_n\bigl(u_n^K(s,y)\bigr)\dd s\dd y\\
			&\quad+\int_{\calZ_t}\rho_{t-s}(x,y)
			\sigma_n\bigl(u_n^K(s,y)\bigr)\LK(\dd s,\dd y),
			\qquad\P\text{-a.s.}
		\end{aligned}
	\end{equation}
	
	For each fixed $n$ and $K$, the global Lipschitz continuity of
	$b_n$ and $\sigma_n$, together with $m_2(K)<\infty$, gives by
	standard Picard iteration a unique predictable mild solution
	of \eqref{eq:mildK} in the class
	\[
	\sup_{t\in[0,T],\,x\in D}\E|u_n^K(t,x)|^2<\infty.
	\]
	The estimates below are uniform in $n$ and independent of the Lipschitz constants $\Lambda_n$.

	\begin{lemma}\label{lem1}
		Let $\alpha<q\le2$ and $K\ge1$. Any family of predictable mild solutions
		$u_n^K$ of \eqref{eq:mildK} satisfies
		\begin{equation}\label{eq:uniform-moment-bound}
			\sup_{n\ge1}\ \sup_{(t,x)\in[0,T]\times D}
			\E\bigl[|u_n^K(t,x)|^q\bigr]\le\Cbound{q}{K},
		\end{equation}
		where $\Cbound{q}{K}$ depends only on
		$q,T,\Cgrow,\|u_0\|_{\L^\infty(D)}$ and $m_q(K)$, and is independent of
		$n$ and $\Lambda_n$. An explicit choice is given in
		Appendix~\ref{app:constants}, \eqref{eq:C1-explicit}. The constant depends on $\alpha$ and $K$ only through $m_q(K)$, which tends to infinity as $K\to\infty$ or $q\downarrow\alpha$.
	\end{lemma}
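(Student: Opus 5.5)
We bound the $q$-th moment of each of the three terms in \eqref{eq:mildK} separately and then close the estimate with a Gronwall-type argument for the weakly singular kernel $(t-s)^{-(q-1)/2}$. Set
\[
M(t):=\sup_{s\le t,\,x\in D}\E|u_n^K(s,x)|^q.
\]
For each fixed $n$, $M(t)$ is finite because the Picard solution satisfies $\sup\E|u_n^K|^2<\infty$ and $q\le 2$.

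The initial term satisfies $|P_tu_0(x)|\le\|u_0\|_{\L^\infty(D)}$, since $\rho_t(x,\cdot)\ge0$ and $\int_D\rho_t(x,y)\dd y\le1$.

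For the drift term, we use \eqref{eq:approx-growth}, the sub-probability property of $\rho$, and Jensen's inequality with respect to the measure $\rho_{t-s}(x,y)\dd s\dd y$, whose total mass is at most $t\le T$. This gives
\[
\E\Bigl|\int_{\calZ_t}\rho_{t-s}(x,y)\,b_n(u_n^K(s,y))\dd s\dd y\Bigr|^q
\le T^{q-1}(2\Cgrow)^q\,2^{q-1}\int_0^t\bigl(1+M(s)\bigr)\dd s .
\]

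For the stochastic term, $\LK$ is a compensated Poisson integral, so we use the Bichteler--Jacod (BDG-type) inequality for $1<q\le2$:
\[
\E\Bigl|\int H\,\dd\LK\Bigr|^q\le C_q\,\E\int_{\calZ_t}\int_{|z|\le K}|H(s,y)z|^q\,\nu_\alpha(\dd z)\dd s\dd y
= C_q\,m_q(K)\,\E\int_{\calZ_t}|H(s,y)|^q\dd s\dd y .
\]
This step is valid for $q\le2$ because the jump-only martingale satisfies $\E|X|^q\le C\,\E\sum|\Delta X|^q$; for $q\le 2$ this follows from BDG together with $(\sum a_i^2)^{q/2}\le\sum a_i^q$ and the compensator identity. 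Taking $H=\rho_{t-s}(x,y)\sigma_n(u_n^K(s,y))$ and using the growth bound yields
\[
C_q\,m_q(K)\,2^{q-1}(2\Cgrow)^q\int_0^t\int_D\rho_{t-s}(x,y)^q\bigl(1+M(s)\bigr)\dd y\dd s .
\]

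Next we need the kernel estimate $\int_D\rho_r(x,y)^q\dd y\le C\,r^{-(q-1)/2}$. It follows from $\rho_r(x,y)\le p_r(x-y)$, the Gaussian kernel with variance $r$, and a direct computation. Since $q\le2<3$, the exponent $(q-1)/2<1$, so the kernel $(t-s)^{-(q-1)/2}$ is integrable. Collecting the three bounds gives
\[
M(t)\le A+B\int_0^t\bigl(1+(t-s)^{-(q-1)/2}\bigr)M(s)\dd s ,
\]
where $A$ and $B$ depend only on $q,T,\Cgrow,\|u_0\|_{\L^\infty(D)}$ and $m_q(K)$. Note that the sup over $s\le t$ is handled by monotonicity in $t$ of the right-hand side.

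The main obstacle is closing this singular Gronwall inequality with constants free of $n$. Two routes are available. One is to invoke the fractional Gronwall lemma (Henry), which gives $M(T)\le A\,E_\beta(\cdots)$ with $\beta=(3-q)/2$. The other, which yields explicit constants in the spirit of Appendix~\ref{app:constants}, is to use a weighted norm $\sup_t e^{-\gamma t}M(t)$. In that case $\int_0^t e^{-\gamma(t-s)}(t-s)^{-(q-1)/2}\dd s\le\Gamma(\beta)\gamma^{-\beta}$, and we choose $\gamma$ large enough that $B(\gamma^{-1}+\Gamma(\beta)\gamma^{-\beta})\le1/2$. This gives $M(T)\le2Ae^{\gamma T}$.

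Finiteness of $M$ for each fixed $n$ is needed to absorb the term, and that is precisely where the a priori $\sup\E|u_n^K|^2<\infty$ from the Picard construction enters. Since the Lipschitz constant $\Lambda_n$ never appears, the resulting constant $\Cbound{q}{K}$ is uniform in $n$ and depends on $\alpha$ and $K$ only through $m_q(K)$.
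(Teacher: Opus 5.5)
Your proposal is correct and is essentially the paper's argument. Both use the same three-term split with Jensen for the drift, the compensated-integral $q$-moment bound of Chong--Dalang--Humeau producing the factor $m_q(K)$, and the kernel bound $\int_D\rho_r(x,y)^q\dd y\le(2\pi r)^{-(q-1)/2}$. Both then close a weakly singular Gronwall inequality by Mittag--Leffler iteration; your weighted-norm closing is an equally valid alternative. Your running supremum $M(t)$ also conveniently avoids the measurability issue for $t\mapsto\sup_x\E|u_n^K(t,x)|^q$, which the paper handles through continuity in $x$.

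One small point: the lemma is stated for \emph{any} predictable mild solutions, so you should obtain finiteness of $M(T)$ for fixed $n$ from the boundedness of $b_n$ and $\sigma_n$, as the paper does. That argument applies to every mild solution, whereas your appeal to the Picard class covers only solutions already known to lie in it.
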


	\begin{proof}
		The moment argument is standard; see Balan \cite{Balan14}*{Sections~5.2 and~6.1}
		and Chong--Dalang--Humeau \cite{CDH}*{Proposition~2.7} for related estimates.
		We give the calculation to track the dependence of $\Cbound{q}{K}$ and,
		in particular, its independence of $n$.
		
		Using the notation and martingale constant $B_q$ of Appendix~\ref{app:constants},
		fix $n$ and $K$, and put
		\[
		F_n(t):=1+\sup_{x\in D}\E|u_n^K(t,x)|^q,
		\qquad \beta_q:=\frac{q-1}{2}=1-\eta_q.
		\]
		For fixed $n$ and $K$, boundedness of the coefficients and continuity
		of the heat kernel give mean-square continuity of
		$x\mapsto u_n^K(t,x)$ for every $t>0$. Since $q\le2$, the function
		$x\mapsto\E|u_n^K(t,x)|^q$ is continuous. Its supremum over $D$
		therefore equals its supremum over $D\cap\bbQ$, which makes
		$F_n$ measurable on $(0,T]$. At $t=0$, it is a deterministic value.
		
		For fixed $n$, boundedness of $b_n$ and $\sigma_n$ gives
		$\sup_{t\le T}F_n(t)<\infty$.
		Using the sub-Markov property of $P_t$, the bound
		$\sup_x\int_D\rho_t(x,y)^q\dd y\le h_qt^{-\beta_q}$,
		and the compensated-integral estimate of \cite{CDH}*{Lemma~A.2(2)},
		in the form \eqref{eq:compensated-maximal} with constant $B_q$,
		Jensen's inequality and
		\eqref{eq:approx-growth} yield
		\[
		\begin{aligned}
			F_n(t)\le a_q
			&+3^{q-1}2^{q-1}(2\Cgrow)^q
			\bigg[T^{q-1}\int_0^t F_n(s)\dd s\\
			&\hspace{35mm}
			+B_qm_q(K)h_q\int_0^t(t-s)^{-\beta_q}F_n(s)\dd s\bigg].
		\end{aligned}
		\]
		Since $1\le T^{\beta_q}(t-s)^{-\beta_q}$ for $0\le s<t\le T$, this implies
		\[
		F_n(t)\le a_q+d_{q,K}\int_0^t(t-s)^{\eta_q-1}F_n(s)\dd s,
		\]
		with $a_q$ and $d_{q,K}$ as in \eqref{eq:C1-explicit}.
		Iterating the convolution inequality and using the beta integral yields
		\[
		F_n(t)\le a_q\sum_{j=0}^\infty
		\frac{\bigl[d_{q,K}\Gamma(\eta_q)t^{\eta_q}\bigr]^j}
		{\Gamma(1+j\eta_q)}
		=a_qE_{\eta_q}\bigl(d_{q,K}\Gamma(\eta_q)t^{\eta_q}\bigr).
		\]
		Taking the supremum over $t\le T$ and $n$ proves
		\eqref{eq:uniform-moment-bound}. Every constant in this last bound depends
		only on $q,T,\Cgrow,\|u_0\|_{\L^\infty(D)}$ and $m_q(K)$; the Lipschitz
		constants $\Lambda_n$ do not enter the estimate.
	\end{proof}
	
	By Lemma~\ref{lem1} with $q=2$, $u_n^K(t,\cdot)\in \L^2(D)$,
	$\P$-a.s., for each fixed $t\in[0,T]$. For $k\ge1$, write
	$f_{n,k}^K(t):=\langle u_n^K(t,\cdot),e_k\rangle$.
	The same moment bound permits stochastic Fubini
	\cite{CDH}*{Theorem~A.3(1), with $p=2$}; hence, by \eqref{eq:heat-kernel},
	for each fixed $t\in[0,T]$ and $k\ge1$,
	\begin{equation}\label{eq:mode}
		f_{n,k}^K(t)=e^{-\lambda_kt}\langle u_0,e_k\rangle
		+\int_0^t e^{-\lambda_k(t-s)}B_{n,k}^K(s)\dd s+X_{n,k}^K(t),
		\qquad\P\text{-a.s.}
	\end{equation}
	where
	\[
	\begin{aligned}
		B_{n,k}^K(s)&:=\langle b_n(u_n^K(s,\cdot)),e_k\rangle,\\
		M_{n,k}^K(t)&:=\int_{\calZ_t}e_k(y)
		\sigma_n\bigl(u_n^K(s,y)\bigr)\LK(\dd s,\dd y),
	\end{aligned}
	\qquad
	X_{n,k}^K(t):=\int_0^t e^{-\lambda_k(t-s)}\dd M_{n,k}^K(s).
	\]
	We use the c\`adl\`ag version of the square-integrable martingale $M_{n,k}^K$.
	As in the calculation of \cite{CDH}*{(2.10)}, $X_{n,k}^K$ has a c\`adl\`ag
	version, while the initial and drift terms are continuous. We henceforth
	use the resulting c\`adl\`ag modification of $f_{n,k}^K$, keeping the same notation.
	
	Since $e^{-\lambda_k(t-s)}=1-\lambda_k\int_s^t e^{-\lambda_k(t-r)}\dd r$,
	stochastic Fubini gives, with probability one, for all $0\le s\le t\le T$,
	\begin{equation}\label{eq:ibp}
		\begin{aligned}
			X_{n,k}^K(t)-e^{-\lambda_k(t-s)}X_{n,k}^K(s)
			&=M_{n,k}^K(t)-M_{n,k}^K(s)\\
			&\quad-\lambda_k\int_s^t e^{-\lambda_k(t-r)}
			\bigl(M_{n,k}^K(r)-M_{n,k}^K(s)\bigr)\dd r.
		\end{aligned}
	\end{equation}
	Consequently, on the same event, for all $0\le s\le t_1\le T$,
	\begin{equation}\label{eq:sup2}
		\sup_{s\le t\le t_1}
		\bigl|X_{n,k}^K(t)-e^{-\lambda_k(t-s)}X_{n,k}^K(s)\bigr|
		\le2\sup_{s\le t\le t_1}\bigl|M_{n,k}^K(t)-M_{n,k}^K(s)\bigr|.
	\end{equation}
	
	To keep track of the constants in the estimates below, for $\alpha<p\le2$ put
	\[
	J_p^K:=2^{p-1}(2\Cgrow)^p\bigl(1+\Cbound{p}{K}\bigr),
	\qquad D_p^K:=2^{p/2}J_p^K.
	\]
	By \eqref{eq:approx-growth} and Lemma~\ref{lem1},
	\[
	\sup_{n\ge1,\,t\le T,\,y\in D}
	\E\bigl|\sigma_n(u_n^K(t,y))\bigr|^p\le J_p^K,
	\qquad
	\sup_{n,k\ge1,\,t\le T}\E\bigl|B_{n,k}^K(t)\bigr|^p\le D_p^K,
	\]
	where the second bound uses Jensen's inequality, $\operatorname{Leb}(D)=1$ and
	$|e_k|\le\sqrt2$. In particular, for $\alpha<q<q'\le2$, uniformly in
	$n,k\ge1$ and $s\in[0,T]$,
	\begin{equation}\label{eq:src}
		\begin{gathered}
			\E\bigl|B_{n,k}^K(s)\bigr|^q\le D_q^K,\qquad
			\E\int_{\calZ_T}\bigl|\sigma_n(u_n^K(s,y))\bigr|^q\dd s\dd y
			\le TJ_q^K,\\
			\E\int_{\calZ_T}|u_n^K(s,y)|^{q'}\dd s\dd y
			\le T\Cbound{q'}{K}.
		\end{gathered}
	\end{equation}
	The constants $J_p^K$ and $D_p^K$ depend only on
	$p,T,\Cgrow,\|u_0\|_{\L^\infty(D)}$ and $m_p(K)$, and are independent of
	$n,k$ and $\Lambda_n$.
	
	\begin{lemma}\label{lem2}
		Let $\alpha<q\le2$ and $K\ge1$.
		\begin{enumerate}[label=(\alph*),leftmargin=2em]
			\item For every $0\le\eta<1/q$,
			\[
			\sup_{n\ge1}\E\int_0^T\|u_n^K(t,\cdot)\|_{\H_\eta}^q\dd t
			\le C_{1,1}^K(q,\eta).
			\]
			\item For every $r>1/q$, the process $t\mapsto u_n^K(t,\cdot)$ has an
			$\H_{-r}$-valued c\`adl\`ag modification, denoted by $u_n^K(t)$, such that
			\[
			\sup_{n\ge1}\E\sup_{t\le T}\|u_n^K(t)\|_{\H_{-r}}^q
			\le C_{1,2}^K(q,r).
			\]
		\end{enumerate}
		Explicit choices are given in
		Appendix~\ref{app:constants}, \eqref{eq:space-sup-constants}.
		Both constants depend only on $q,T,\Cgrow,\|u_0\|_{\L^\infty(D)}$ and
		$m_q(K)$, together with $\eta$ for $C_{1,1}^K$ and $r$ for $C_{1,2}^K$.
		In particular, they are independent of $n,k$ and $\Lambda_n$.
		They depend on $\alpha,c_\alpha,K$ only through
		$m_q(K)=2c_\alpha K^{q-\alpha}/(q-\alpha)$. The constants $C_{1,1}^K(q,\eta)$ and $C_{1,2}^K(q,r)$ tend to infinity as $\eta q\uparrow1$ and $rq\downarrow1$, respectively.
	\end{lemma}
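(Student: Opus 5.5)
For (a), I would work from the mild equation \eqref{eq:mildK} and estimate the three terms separately in $\L^q(\Omega\times[0,T];\H_\eta)$.

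\emph{Initial term.} For $u_0\in\L^\infty(D)\subset\L^2(D)$ we have
\[
\|P_tu_0\|_{\H_\eta}^q\le (e^{-1}\eta/t)^{\eta q/2}\|u_0\|_{\L^2(D)}^q ,
\]
which is integrable on $[0,T]$ because $\eta q<2$.

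\emph{Drift term.} Minkowski's inequality and $\|P_{t-s}f\|_{\H_\eta}\le C(t-s)^{-\eta/2}\|f\|_{\L^2(D)}$ apply. Since $q\le2$, I would control $\E\|b_n(u_n^K(s,\cdot))\|_{\L^2(D)}^q\le(\E\|b_n(u_n^K(s))\|_{\L^2}^2)^{q/2}$ by Lemma~\ref{lem1} with $q=2$ (using $m_2(K)<\infty$). Alternatively, I would work in $\H_\eta$ via $\L^q$-based bounds. The singularity $(t-s)^{-\eta/2}$ is integrable.

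\emph{Stochastic term.} I would expand in modes, $\langle X,e_k\rangle=X_{n,k}^K(t)$, so that $\|X(t)\|_{\H_\eta}^2=\sum_k\lambda_k^\eta|X_{n,k}^K(t)|^2$. Since $q\le2$, Jensen does not directly help. Instead I would use the elementary inequality $(\sum a_k)^{q/2}\le\sum a_k^{q/2}$, which gives
\[
\E\|X(t)\|_{\H_\eta}^q\le\sum_k\lambda_k^{\eta q/2}\E|X_{n,k}^K(t)|^q .
\]
The compensated-integral bound \eqref{eq:compensated-maximal} with constant $B_q$ and $m_q(K)$ then yields
\[
\E|X_{n,k}^K(t)|^q\le B_qm_q(K)\,2^{q/2}\int_0^te^{-q\lambda_k(t-s)}\E\!\int_D|\sigma_n(u_n^K(s,y))|^q\dd y\dd s .
\]
Integrating in $t$ gives at most $B_qm_q(K)2^{q/2}(q\lambda_k)^{-1}TJ_q^K$ by \eqref{eq:src}. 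The series $\sum_k\lambda_k^{\eta q/2-1}\sim\sum_k k^{\eta q-2}$ converges iff $\eta q<1$, which is exactly the stated range and explains the blow-up as $\eta q\uparrow1$. All constants involve only $J_q^K$, $\Cbound{q}{K}$ and $m_q(K)$, so none depend on $n$ or $\Lambda_n$.

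For (b), I would write
\[
\|v\|_{\H_{-r}}^q=\Bigl(\sum_k\lambda_k^{-r}|f_k|^2\Bigr)^{q/2}\le\sum_k\lambda_k^{-rq/2}|f_k|^q ,
\]
so that
\[
\E\sup_t\|u_n^K(t)\|_{\H_{-r}}^q\le\sum_k\lambda_k^{-rq/2}\,\E\sup_t|f_{n,k}^K(t)|^q .
\]
Each mode is estimated through \eqref{eq:mode}. The initial term is bounded by $\|u_0\|_{\L^2}$, and the drift term by $T^{q-1}\int_0^T\E|B^K_{n,k}|^q\le T^qD_q^K$. The stochastic term, via \eqref{eq:sup2} with $s=0$, is at most $2\sup_t|M_{n,k}^K(t)|$, and the maximal inequality \eqref{eq:compensated-maximal} gives
\[
\E\sup_t|M^K_{n,k}|^q\le B_qm_q(K)2^{q/2}TJ_q^K .
\]
All three bounds are uniform in $k$. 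Hence the series converges iff $\sum_k k^{-rq}<\infty$, i.e.\ $rq>1$, giving $C_{1,2}^K(q,r)$ with the stated blow-up as $rq\downarrow1$.

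For the càdlàg modification, each $f_{n,k}^K$ is already càdlàg. I would define $u_n^K(t):=\sum_kf_{n,k}^K(t)e_k$. The bound above shows that the tails satisfy $\E\sup_t\|\sum_{k>m}f_{n,k}^Ke_k\|_{\H_{-r}}^q\to0$. Thus the partial sums, which are càdlàg in $\H_{-r}$, converge uniformly in $t$ in $\H_{-r}$ along a subsequence a.s. A uniform limit of càdlàg functions is càdlàg. Agreement with $u_n^K(t,\cdot)$ at fixed $t$ follows because the coefficients agree a.s.

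The main obstacle I expect is justifying the moment bound \eqref{eq:compensated-maximal} for $q\le2$ with the exponentially weighted integrand in (a), i.e.\ treating $X_{n,k}^K(t)$ at fixed $t$ as a stochastic integral of the deterministic-in-$t$ kernel $e^{-\lambda_k(t-s)}e_k(y)\sigma_n(\cdot)$ so that the $\lambda_k^{-1}$ gain appears. I would handle this by applying the lemma at fixed $t$ to the predictable integrand $s\mapsto e^{-\lambda_k(t-s)}e_k(y)\sigma_n(u_n^K(s,y))\mathbf1_{s\le t}$, which is legitimate via stochastic Fubini as in \eqref{eq:mode}.
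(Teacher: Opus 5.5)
Your part (b), and your treatment of the stochastic convolution in (a), are the paper's proof. Both use the mode expansion, $(\sum_k a_k)^{q/2}\le\sum_k a_k^{q/2}$, and the compensated bound at fixed $t$ applied to the integrand $e^{-\lambda_k(t-s)}e_k(y)\sigma_n(u_n^K(s,y))$. That integrand is already supplied by \eqref{eq:mode}, so the ``main obstacle'' you flag is handled exactly as you propose. The supremum then follows from \eqref{eq:sup2} with the maximal bound, and the c\`adl\`ag modification from the tail sums.

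The real difference is in the initial and drift terms of (a). You use the smoothing estimate $\|P_\tau f\|_{\H_\eta}\le C\tau^{-\eta/2}\|f\|_{\L^2(D)}$ together with Minkowski. The paper instead stays mode by mode for all three terms. It uses $\int_0^Te^{-q\lambda_kt}\dd t\le(q\lambda_k)^{-1}$ for $u_0$, and for the drift it uses H\"older with the exponential weight,
\[
\E\Bigl|\int_0^te^{-\lambda_k(t-s)}B_{n,k}^K(s)\dd s\Bigr|^q
\le\lambda_k^{1-q}\int_0^te^{-\lambda_k(t-s)}\E|B_{n,k}^K(s)|^q\dd s
\le D_q^K\lambda_k^{-q}.
\]
Thus every term carries a factor $\lambda_k^{-1}$, and one spectral sum produces $C_{1,1}^K(q,\eta)$. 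Your bound on $P_tu_0$ is sharper, since it only needs $\eta q<2$; in both routes the restriction $\eta q<1$ comes from the noise term. The paper's bound is cruder but uniform across the three terms.

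The one defect is your drift step. Controlling $\E\|b_n(u_n^K(s,\cdot))\|_{\L^2(D)}^q$ through Lemma~\ref{lem1} with exponent $2$ brings in $J_2^K$, hence $m_2(K)=2c_\alpha K^{2-\alpha}/(2-\alpha)$, which is not a function of $m_q(K)$. This contradicts the clause of the statement saying the constants depend on $\alpha,c_\alpha,K$ only through $m_q(K)$. It also contradicts your own closing sentence that only $J_q^K$, $\Cbound{q}{K}$ and $m_q(K)$ enter.

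The bound you get is still finite and uniform in $n$ and $\Lambda_n$, which is all that is used later, so this is a bookkeeping gap rather than a failure. To prove the lemma as stated, replace that step by the mode-by-mode H\"older estimate displayed above. Your unspecified ``$\L^q$-based'' alternative would also work once made precise, for instance by combining $\|P_\tau f\|_{\L^2(D)}\le C\tau^{-(1/q-1/2)/2}\|f\|_{\L^q(D)}$ with the $\H_\eta$ smoothing and with $\E\|b_n(u_n^K(s,\cdot))\|_{\L^q(D)}^q\le J_q^K$.
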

	
	\begin{proof}
		\emph{(a)} By Jensen's inequality and the bounds defining $D_q^K$,
		\[
		\E\left|\int_0^t e^{-\lambda_k(t-s)}B_{n,k}^K(s)\dd s\right|^q
		\le D_q^K\lambda_k^{-q}.
		\]
		The compensated-integral bound \eqref{eq:compensated-maximal},
		based on \cite{CDH}*{Lemma~A.2(2)}, also gives
		\[
		\E|X_{n,k}^K(t)|^q
		\le B_qm_q(K)D_q^K\int_0^t e^{-q\lambda_k(t-s)}\dd s
		\le\frac{B_qm_q(K)D_q^K}{q\lambda_k}.
		\]
		Since $|\langle u_0,e_k\rangle|\le\|u_0\|_{\L^2(D)}\le\|u_0\|_{\L^\infty(D)}$,
		\eqref{eq:mode} and $\lambda_k^{-q}\le\lambda_1^{1-q}\lambda_k^{-1}$ yield
		\[
		\E\int_0^T|f_{n,k}^K(t)|^q\dd t
		\le\frac{3^{q-1}}{\lambda_k}
		\left[\frac{\|u_0\|_{\L^\infty(D)}^q}{q}
		+TD_q^K\lambda_1^{1-q}+\frac{TB_qm_q(K)D_q^K}{q}\right].
		\]
		Using $(\sum_k a_k)^{q/2}\le\sum_k a_k^{q/2}$ for $a_k\ge0$ and Tonelli,
		we sum this bound with weights $\lambda_k^{\eta q/2}$.
		The identity
		$\sum_{k\ge1}\lambda_k^{\eta q/2-1}
		=\lambda_1^{\eta q/2-1}\zeta(2-\eta q)$ proves (a).
		
		\emph{(b)} By \eqref{eq:sup2} with $s=0$ and the maximal
		compensated-integral bound \eqref{eq:compensated-maximal},
		\[
		\E\sup_{t\le T}|X_{n,k}^K(t)|^q
		\le2^q\E\sup_{t\le T}|M_{n,k}^K(t)|^q
		\le2^qB_qm_q(K)TD_q^K.
		\]
		For the drift term, H\"older's inequality gives
		\[
		\E\sup_{t\le T}
		\left|\int_0^t e^{-\lambda_k(t-s)}B_{n,k}^K(s)\dd s\right|^q
		\le T^{q-1}\E\int_0^T|B_{n,k}^K(s)|^q\dd s
		\le T^qD_q^K.
		\]
		Consequently,
		\[
		\E\sup_{t\le T}|f_{n,k}^K(t)|^q
		\le3^{q-1}\left[\|u_0\|_{\L^\infty(D)}^q+T^qD_q^K
		+2^qB_qm_q(K)TD_q^K\right].
		\]
		Summing with weights $\lambda_k^{-rq/2}$ gives $C_{1,2}^K(q,r)$, since
		$\sum_{k\ge1}\lambda_k^{-rq/2}=\lambda_1^{-rq/2}\zeta(rq)<\infty$.
		The same estimate with the tail sum shows that the c\`adl\`ag partial sums
		$\sum_{k\le m}f_{n,k}^K(t)e_k$ are Cauchy uniformly in time in probability
		in $\H_{-r}$. A subsequence converges uniformly in time almost surely,
		and its limit is c\`adl\`ag. At each fixed time, this limit agrees almost
		surely with $u_n^K(t,\cdot)$ by its Fourier expansion in $\L^2(D)$.
		This constructs the modification in (b); Fatou's lemma gives its stated bound.
	\end{proof}
	
	For $0<\vartheta<1$, $1\le q<\infty$ and $r\in\R$, the fractional Sobolev
	space $W^{\vartheta,q}(0,T;\H_{-r})$ consists of the functions
	$v\in \L^q(0,T;\H_{-r})$ for which the following norm is finite:
	\[
	\|v\|_{W^{\vartheta,q}(0,T;\H_{-r})}^q
	:=\int_0^T\|v(t)\|_{\H_{-r}}^q\dd t
	+\iint_{(0,T)^2}
	\frac{\|v(t)-v(s)\|_{\H_{-r}}^q}{|t-s|^{1+\vartheta q}}\dd s\dd t.
	\]
	
	\begin{lemma}\label{lem:time-increments}
		Let $\alpha<q<q'\le2$, $K\ge1$ and $r>1/q$, and set
		\[
		\gamma:=\min\left\{q-1,\ 1-\frac{q}{q'},\ \frac{rq-1}{2}\right\}>0.
		\]
		For every stopping time $\tau\le T$ and $0<\delta\le1$,
		\begin{equation}\label{eq:stopped-increments}
			\sup_{n\ge1}\E\bigl\|u_n^K((\tau+\delta)\wedge T)-u_n^K(\tau)
			\bigr\|_{\H_{-r}}^q
			\le C_{2,1}^K(q,q',r)\delta^\gamma.
		\end{equation}
		Moreover, for $0<\vartheta<\gamma/q$,
		\begin{equation}\label{eq:fractional-time-bound}
			\sup_{n\ge1}\E\|u_n^K\|_{W^{\vartheta,q}(0,T;\H_{-r})}^q
			\le C_{2,2}^K(q,q',r,\vartheta).
		\end{equation}
		Explicit constants are given in Appendix~\ref{app:constants},
		\eqref{eq:time-constants}. They depend only on
		$q,q',r,T,\Cgrow,\|u_0\|_{\L^\infty(D)}$ and the truncated
		moments $m_q(K)$ and $m_{q'}(K)$, and on $\vartheta$ in the case of
		$C_{2,2}^K$; in particular, they do not depend on $n$, $\tau$ or
		$\delta$. The constant $C_{2,2}^K$ tends to infinity as
		$\vartheta q\uparrow\gamma$.
	\end{lemma}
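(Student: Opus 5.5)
We work mode by mode and sum with the weights $\lambda_k^{-r}$, as in the proof of Lemma~\ref{lem2}(b). Write $\sigma':=(\tau+\delta)\wedge T$. By the mode equation \eqref{eq:mode} and its c\`adl\`ag version, applied at the stopping times $\tau\le\sigma'$ (valid through the pathwise identity \eqref{eq:ibp}),
\[
f_{n,k}^K(\sigma')-f_{n,k}^K(\tau)=\bigl(e^{-\lambda_k(\sigma'-\tau)}-1\bigr)f_{n,k}^K(\tau)+\int_\tau^{\sigma'}e^{-\lambda_k(\sigma'-s)}B_{n,k}^K(s)\dd s+\bigl[X_{n,k}^K(\sigma')-e^{-\lambda_k(\sigma'-\tau)}X_{n,k}^K(\tau)\bigr].
\]
We bound the $q$-th moment of each of the three terms separately.

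\emph{The restart term.} We use $|1-e^{-x}|\le x^{\kappa}$ for $\kappa\in[0,1]$, so that this term is at most $(\lambda_k\delta)^{\kappa}|f_{n,k}^K(\tau)|$. Summing over $k$ with $\lambda_k^{-r}$ and using $(\sum a_k)^{q/2}\le\sum a_k^{q/2}$, we need $\sum_k\lambda_k^{(\kappa-r)q/2}\E\sup_t|f^K_{n,k}(t)|^q<\infty$. The mode sup bound from the proof of Lemma~\ref{lem2}(b) is uniform in $k$, so we need $(r-\kappa)q>1$. The choice $\kappa q=(rq-1)/2$ does this and gives the factor $\delta^{(rq-1)/2}$.

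\emph{The drift term.} H\"older's inequality gives
\[
\Bigl|\int_\tau^{\sigma'}e^{-\lambda_k(\sigma'-s)}B_{n,k}^K(s)\dd s\Bigr|^q\le\delta^{q-1}\int_0^T|B_{n,k}^K|^q\dd s.
\]
With \eqref{eq:src} this is bounded by $\delta^{q-1}TD_q^K$ uniformly in $k$, and summability in $k$ follows from $rq>1$. This term contributes the exponent $q-1$.

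\emph{The stochastic term (main obstacle).} By \eqref{eq:sup2} with $s=\tau$ (pathwise, hence valid at stopping times), this term is bounded by $2\sup_{\tau\le t\le\tau+\delta}|M_{n,k}^K(t)-M_{n,k}^K(\tau)|$. The maximal compensated-integral bound \eqref{eq:compensated-maximal} applied to the restarted integrand $\mathbf1_{(\tau,\tau+\delta]}(s)e_k\sigma_n(u_n^K)$ gives
\[
2^qB_qm_q(K)\,\E\int_\tau^{\tau+\delta}\!\!\int_D|\sigma_n(u_n^K(s,y))|^q\dd y\dd s.
\]
The difficulty is that the moment bound of Lemma~\ref{lem1} is pointwise in $(s,y)$ and cannot be integrated over a random interval directly. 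We therefore apply H\"older in $(\omega,s,y)$ with exponents $q'/q$ and $q'/(q'-q)$:
\[
\E\int_\tau^{\tau+\delta}\!\!\int_D|\sigma_n|^q\le\Bigl(\E\int_{\calZ_T}|\sigma_n(u_n^K)|^{q'}\Bigr)^{q/q'}\bigl(\E\,\delta\bigr)^{1-q/q'}\le (TJ_{q'}^K)^{q/q'}\delta^{1-q/q'},
\]
using the bound on $\E|\sigma_n(u_n^K)|^{q'}$ from Lemma~\ref{lem1}. This explains the exponent $1-q/q'$ and the dependence on $m_{q'}(K)$. Summing in $k$ again uses $rq>1$.

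\emph{Collecting the bounds.} The three contributions carry the exponents $(rq-1)/2$, $q-1$ and $1-q/q'$. Since $\delta\le1$, each is at most $\delta^\gamma$, and combining them gives \eqref{eq:stopped-increments}. All constants come from Lemmas~\ref{lem1} and~\ref{lem2} and \eqref{eq:src}, so they are independent of $n$.

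\emph{The fractional bound.} For \eqref{eq:fractional-time-bound}, apply \eqref{eq:stopped-increments} with deterministic $\tau=s$ and $\delta=t-s$ for $s<t$ (for $|t-s|>1$ use instead the sup bound of Lemma~\ref{lem2}(b)). Then, by Tonelli,
\[
\E\iint\frac{\|u(t)-u(s)\|_{\H_{-r}}^q}{|t-s|^{1+\vartheta q}}\le C\iint|t-s|^{\gamma-1-\vartheta q}\dd s\dd t,
\]
which is finite precisely when $\vartheta q<\gamma$ and blows up as $\vartheta q\uparrow\gamma$. The $\L^q(0,T;\H_{-r})$ part of the norm is bounded by $T\,C_{1,2}^K(q,r)$.
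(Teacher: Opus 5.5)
The gap is in the restart term; the other steps are fine. Your drift term, your martingale term (stopped maximal bound, then H\"older on $\Omega\times(0,T)\times D$ with exponents $q'/q$ and $q'/(q'-q)$) and your fractional norm follow the paper's argument.

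\textbf{The restart term.} After $|1-e^{-\lambda_k(\sigma'-\tau)}|\le(\lambda_k\delta)^\kappa$, the $k$-th summand in $\|\cdot\|_{\H_{-r}}^q\le\sum_k\lambda_k^{-rq/2}|\langle\cdot,e_k\rangle|^q$ is $\lambda_k^{-rq/2}(\lambda_k\delta)^{\kappa q}|f_{n,k}^K(\tau)|^q=\delta^{\kappa q}\lambda_k^{(2\kappa-r)q/2}|f_{n,k}^K(\tau)|^q$. You wrote $\lambda_k^{(\kappa-r)q/2}$, losing a factor $2$ in front of $\kappa$. You only use the $k$-uniform bound on $\E\sup_t|f_{n,k}^K(t)|^q$ from Lemma~\ref{lem2}(b). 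No decay in $k$ should be expected at a general stopping time, where the state may carry a Dirac mass. Summability therefore needs $(r-2\kappa)q>1$, i.e.\ $\kappa q<(rq-1)/2$ strictly. At your choice $\kappa q=(rq-1)/2$ the series is $\sum_k\lambda_k^{-1/2}$, which diverges like the harmonic series. So a single power $\kappa$ proves \eqref{eq:stopped-increments} only when $\gamma<(rq-1)/2$ (take $\kappa q=\gamma$). It misses the endpoint case $\gamma=(rq-1)/2$, which occurs, for example, whenever $rq$ is close to $1$.

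\textbf{The fix.} Let the cutoff depend on the mode, as the paper does: keep $|1-e^{-x}|\le\min(1,x)$ and split $\sum_k\lambda_k^{-rq/2}\min(1,\lambda_k\delta)^q$ at $\lambda_k=\delta^{-1}$. The high modes contribute $\sum_{\lambda_k>\delta^{-1}}\lambda_k^{-rq/2}\lesssim\delta^{(rq-1)/2}$. The low modes contribute $\delta^q\sum_{\lambda_k\le\delta^{-1}}\lambda_k^{q-rq/2}$, which is $\lesssim\delta^{(rq-1)/2}$ if $rq<2q+1$, $\lesssim\delta^q(1+|\log\delta|)$ if $rq=2q+1$, and $\lesssim\delta^q$ if $rq>2q+1$. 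Since $\gamma\le(rq-1)/2$ and $\gamma<q$, each of these is at most a constant depending only on $q,r$ times $\delta^\gamma$ for $0<\delta\le1$. With this replacement the rest of your argument goes through.

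Your fractional bound would actually survive the weaker estimate, by picking $\gamma'\in(\vartheta q,\gamma)$. The first claim with the stated $\gamma$, however, needs the splitting. One harmless slip: the martingale bound should carry the factor $|e_k|^q\le2^{q/2}$.
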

	
	\begin{proof}
		Use $S_{q,r},V_q^K,\frakS_{q,r,\gamma}$ and $N_{q,q'}^K$
		from Appendix~\ref{app:constants}, and set $\tau':=(\tau+\delta)\wedge T$.
		Equations \eqref{eq:mode}--\eqref{eq:sup2}
		give
		\[
		\begin{aligned}
			|f_{n,k}^K(\tau')-f_{n,k}^K(\tau)|
			&\le\min(1,\lambda_k\delta)|f_{n,k}^K(\tau)|
			+\int_\tau^{\tau'}|B_{n,k}^K(s)|\dd s\\
			&\quad+2\sup_{\tau\le t\le\tau'}|M_{n,k}^K(t)-M_{n,k}^K(\tau)|.
		\end{aligned}
		\]
		The proof of Lemma~\ref{lem2}(b) bounds
		$\E\sup_{t\le T}|f_{n,k}^K(t)|^q$ by $V_q^K$.
		H\"older's inequality bounds the $q$-th moment of the drift term by
		$TD_q^K\delta^{q-1}$. For the martingale term, apply
		\eqref{eq:compensated-stopped} with
		$H(s,y)=e_k(y)\sigma_n(u_n^K(s,y))$.
		This is the stopped maximal form of the compensated-integral estimate
		in \cite{CDH}*{Lemma~A.2(2)}, justified in Appendix~\ref{app:constants}.
		Together with H\"older's inequality on $\Omega\times(0,T)\times D$, it yields
		\[
		\begin{aligned}
			\E\sup_{\tau\le t\le\tau'}|M_{n,k}^K(t)-M_{n,k}^K(\tau)|^q
			&\le2^{q/2}B_qm_q(K)
			\E\int_{\calZ_T}\mathbf1_{\{\tau<s\le\tau'\}}
			|\sigma_n(u_n^K(s,y))|^q\dd s\dd y\\
			&\le N_{q,q'}^K\delta^{1-q/q'}.
		\end{aligned}
		\]
		Indeed, $\E\int_{\calZ_T}\mathbf1_{\{\tau<s\le\tau'\}}\dd s\dd y
		\le\delta$, and the $q'$-moment integral is at most $TJ_{q'}^K$.
		Summing these bounds over $k\ge1$ with weights $\lambda_k^{-rq/2}$ gives
		\[
		\begin{aligned}
			\E\|u_n^K(\tau')-u_n^K(\tau)\|_{\H_{-r}}^q
			\le3^{q-1}\bigg[&V_q^K\sum_{k\ge1}
			\lambda_k^{-rq/2}\min(1,\lambda_k\delta)^q\\
			&+\bigl(TD_q^K\delta^{q-1}
			+2^qN_{q,q'}^K\delta^{1-q/q'}\bigr)S_{q,r}\bigg].
		\end{aligned}
		\]
		Splitting the spectral sum at $\lambda_k=\delta^{-1}$ shows that it is bounded
		by a constant depending only on $q,r$ times
		\[
		\begin{cases}
			\delta^{(rq-1)/2},&1<rq<2q+1,\\
			\delta^q(1+|\log \delta|),&rq=2q+1,\\
			\delta^q,&rq>2q+1.
		\end{cases}
		\]
		Since $\gamma\le(rq-1)/2$ and $\gamma<q$, this proves
		$\frakS_{q,r,\gamma}<\infty$ and \eqref{eq:stopped-increments}.
		The term $2^qC_{1,2}^K(q,r)$ in \eqref{eq:time-constants} also ensures
		$\E\|u_n^K(t)-u_n^K(s)\|_{\H_{-r}}^q
		\le C_{2,1}^K(q,q',r)|t-s|^\gamma$ when $|t-s|>1$.
		Finally, integrating the deterministic-time increment bound and using
		Lemma~\ref{lem2}(b), together with
		\[
		\iint_{(0,T)^2}|t-s|^{\gamma-1-\vartheta q}\dd s\dd t
		=\frac{2T^{1+\gamma-\vartheta q}}
		{(\gamma-\vartheta q)(1+\gamma-\vartheta q)},
		\]
		proves \eqref{eq:fractional-time-bound}.
	\end{proof}
	
	\begin{lemma}\label{lem:truncated-tightness}
		Let $K\ge1$, $\alpha<q<2$ and $r>1/q$. The laws of $u_n^K$, $n\ge1$,
		are tight in $\Path{\H_{-r}}$ with the Skorokhod $\mathrm{J}_1$ topology and in
		$\L^q((0,T)\times D)$ with the norm topology.
	\end{lemma}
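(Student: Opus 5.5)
For tightness in $\Path{\H_{-r}}$ I will use Aldous's criterion in the form valid for a separable Hilbert space (Jakubowski/Métivier). It needs two things: (i) tightness of the one-dimensional marginals $u_n^K(t)$, uniformly in $t$ or at least for each $t$ in a dense set, in a compact sense; (ii) the Aldous condition $\lim_{\delta\downarrow0}\limsup_n\sup_{\tau}\P(\|u_n^K((\tau+\delta)\wedge T)-u_n^K(\tau)\|_{\H_{-r}}>\epsilon)=0$, where the supremum runs over stopping times $\tau\le T$.

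Condition (ii) follows immediately from \eqref{eq:stopped-increments}. Fix $q'\in(q,2]$, say $q'=2$. Markov's inequality gives the bound $C_{2,1}^K(q,q',r)\delta^\gamma\epsilon^{-q}$, uniformly in $n$ and $\tau$.

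For (i), the point is that $\H_{-r}$-boundedness alone is not compactness, so I will use an intermediate exponent. Since $r>1/q$, choose $r'$ with $1/q<r'<r$. The embedding $\H_{-r'}\hookrightarrow\H_{-r}$ is compact, because its diagonal multipliers $\lambda_k^{(r'-r)/2}$ tend to $0$. Lemma~\ref{lem2}(b) with $r'$ gives $\sup_n\E\sup_{t\le T}\|u_n^K(t)\|_{\H_{-r'}}^q\le C_{1,2}^K(q,r')$. By Markov, $\sup_t\|u_n^K(t)\|_{\H_{-r'}}\le R$ with probability at least $1-C/R^q$, uniformly in $n$. Balls of $\H_{-r'}$ are compact in $\H_{-r}$, which yields the compact containment condition for all $t$ simultaneously. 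This is stronger than (i), so Aldous's criterion gives tightness in $\Path{\H_{-r}}$.

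For tightness in $\L^q((0,T)\times D)=\L^q(0,T;\L^2(D))$-type spaces, I need care: $\L^q(\calZ_T)$ is $\L^q(0,T;\L^q(D))$, not $\L^2$ in space. I will apply the Aubin--Lions--Simon compactness lemma in its fractional form (Flandoli--Gatarek): for Banach spaces $B_0\Subset B\hookrightarrow B_1$, the set $\L^q(0,T;B_0)\cap W^{\vartheta,q}(0,T;B_1)$ embeds compactly into $\L^q(0,T;B)$. I take $B_0=\H_\eta$ with some $0<\eta<1/q$, $B=\L^q(D)$ and $B_1=\H_{-r}$. The bounds are supplied by earlier results: Lemma~\ref{lem2}(a) bounds $\E\int_0^T\|u_n^K\|_{\H_\eta}^q$, and \eqref{eq:fractional-time-bound} with $0<\vartheta<\gamma/q$ bounds the $W^{\vartheta,q}(0,T;\H_{-r})$ norm, both uniformly in $n$. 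Markov's inequality then puts $u_n^K$ in a ball of the intersection space with high probability, and that ball is compact in $\L^q(0,T;\L^q(D))$.

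The remaining point, which I expect to be the main obstacle, is the compact embedding $\H_\eta\Subset\L^q(D)$ for $q<2$. I will first note that $\L^2(D)\hookrightarrow\L^q(D)$ continuously, since $D$ is bounded and $q<2$. Then $\H_\eta\Subset\H_0=\L^2(D)$ compactly whenever $\eta>0$, again by the decay of the diagonal multipliers. Composing a compact embedding with a continuous one gives compactness into $\L^q(D)$, and also gives the continuous inclusion $\L^q(D)\hookrightarrow\H_{-r}$ needed for the middle space. That last inclusion holds because $r>1/q$: we have $\|e_k\|_\infty\le\sqrt2$, so $\L^1\hookrightarrow\H_{-r}$ already for $r>1/2$, and $1/q>1/2$. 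Finally, the field $u_n^K$ and the state path agree a.e. in $t$ as elements of $\H_{-r}$, so the two representations define the same element and the two tightness statements are consistent.
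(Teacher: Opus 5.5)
Your proposal is correct and follows essentially the paper's route: compact containment from Lemma~\ref{lem2}(b) at an intermediate exponent together with the Aldous condition from \eqref{eq:stopped-increments} gives tightness in $\Path{\H_{-r}}$, and a fractional Aubin--Lions--Simon lemma fed by Lemma~\ref{lem2}(a) and \eqref{eq:fractional-time-bound} gives tightness in $\L^q((0,T)\times D)$. Your choice of $\L^q(D)$ as the middle space, where the paper takes $\L^2(D)$ and then embeds $\L^q(0,T;\L^2(D))$ into $\L^q((0,T)\times D)$, works equally well. The one point the paper treats more carefully is the endpoint $T$: it applies a $[0,\infty)$ version of Aldous's criterion and then proves that every subsequential limit is a.s.\ continuous at $T$, whereas you invoke a version stated directly on $[0,T]$ with increments capped at $T$; this is legitimate provided the cited form ranges over stopping times arbitrarily close to $T$, as your condition does.
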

	\begin{proof}
		Choose
		\[
		q':=\frac{q+2}{2},\qquad r_0:=\frac{r+1/q}{2},\qquad
		\eta:=\frac1{2q},\qquad\vartheta:=\frac{\gamma}{2q},
		\]
		where $\gamma$ is given by Lemma~\ref{lem:time-increments} for $q,q',r$.
		These choices satisfy $q<q'\le2$, $1/q<r_0<r$, $0<\eta<1/q$
		and $0<\vartheta<\gamma/q$.
		
		\emph{Path space.} The embedding $\H_{-r_0}\hookrightarrow \H_{-r}$ is compact.
		By Lemma~\ref{lem2}(b) and Markov's inequality,
		\begin{equation}\label{eq:compact-containment}
			\sup_{n\ge1}\P\left(\sup_{t\in[0,T]}
			\|u_n^K(t)\|_{\H_{-r_0}}>R\right)
			\le C_{1,2}^K(q,r_0)R^{-q}.
		\end{equation}
		Here the modifications in the two Sobolev spaces agree at all times
		almost surely, by equality at rational times and at $T$, and their
		c\`adl\`ag paths. Since bounded balls of $\H_{-r_0}$ have compact
		closure in $\H_{-r}$, this gives compact containment.
		
		To prove tightness, we use Aldous's criterion \cite{Aldous}*{Theorem~1}
		in the metric-space form of \cite{LalleyTang20}*{Theorem~2.2}. Since
		the latter is stated for paths on $[0,\infty)$, we first extend the
		paths after $T$ by keeping them constant. Some care is needed when we
		come back to $[0,T]$. On $[0,\infty)$, the time changes in the
		$\mathrm{J}_1$ topology may move the point $T$, while on $[0,T]$ they
		must keep $T$ fixed. So, if the limit path jumps at $T$, convergence
		on $[0,\infty)$ does not give convergence of the restrictions to
		$[0,T]$. For this reason, we first show that every subsequential
		limit is almost surely continuous at $T$.
		
		Set $\widehat u_n^K(t):=u_n^K(t\wedge T)$ for all $t\ge0$, and keep the
		filtration equal to $\Fil_T$ after time $T$. Since $\widehat u_n^K(t)=u_n^K(t\wedge T)$, the bound
		\eqref{eq:compact-containment} also holds for $\widehat u_n^K$,
		with the supremum taken over $t\ge0$. Let $\tau$ be a bounded stopping time and
		$0<h\le1$. Then $\tau\wedge T$ is a stopping time bounded by $T$, and
		$\widehat u_n^K(\tau+h)-\widehat u_n^K(\tau)
		=u_n^K(((\tau\wedge T)+h)\wedge T)-u_n^K(\tau\wedge T)$. So
		\eqref{eq:stopped-increments} gives
		\[
		\sup_{n\ge1}\E
		\|\widehat u_n^K(\tau+h)-\widehat u_n^K(\tau)\|_{\H_{-r}}^q
		\le C_{2,1}^K(q,q',r)h^\gamma.
		\]
		This bound, \eqref{eq:compact-containment} and Markov's inequality
		give the two conditions of \cite{LalleyTang20}*{Theorem~2.2}, so the
		laws of $\widehat u_n^K$ are tight in $\bbD([0,\infty);\H_{-r})$ with
		the $\mathrm{J}_1$ topology.
		
		By Prokhorov's theorem \cite{EK}*{Chapter~3, Theorem~2.2}, every
		subsequence of $(\widehat u_n^K)$ has a further subsequence that
		converges in distribution in $\bbD([0,\infty);\H_{-r})$. Let
		$\widehat u^K$ be the limit of such a subsequence, still indexed by
		$n$, defined on some probability space with expectation $\E_*$.
		The paths that are constant on $[T,\infty)$ form a closed set in the
		$\mathrm{J}_1$ topology, so $\widehat u^K$ is also constant on
		$[T,\infty)$, almost surely. In particular, $T+1$ is an almost sure
		continuity time of $\widehat u^K$. Choose deterministic times
		$\delta_j\downarrow0$ with $\delta_j<\min\{1,T\}$ such that each
		$T-\delta_j$ is also an almost sure continuity time of
		$\widehat u^K$. This is possible because a c\`adl\`ag process can jump
		with positive probability at only countably many fixed times.
		Since $\widehat u_n^K(T+1)=u_n^K(T)$, \eqref{eq:stopped-increments}
		with $\tau=T-\delta_j$ and $\delta=\delta_j$ gives
		\begin{equation}\label{eq:increment-at-T}
			\E\|\widehat u_n^K(T+1)-\widehat u_n^K(T-\delta_j)\|_{\H_{-r}}^q
			\le C_{2,1}^K(q,q',r)\delta_j^\gamma.
		\end{equation}
		Since $T+1$ and $T-\delta_j$ are continuity times of the limit, the
		difference $\widehat u_n^K(T+1)-\widehat u_n^K(T-\delta_j)$ converges
		in law to $\widehat u^K(T+1)-\widehat u^K(T-\delta_j)$. The function
		$x\mapsto\|x\|_{\H_{-r}}^q$ is continuous and nonnegative, so the
		expectation of its value at the limit is at most the $\liminf$ of the
		expectations; this follows from \cite{EK}*{Chapter~3, Theorem~3.1(e)},
		applied to the open sets $\{x:\|x\|_{\H_{-r}}^q>a\}$, $a\ge0$, and
		Fatou's lemma. By
		\eqref{eq:increment-at-T},
		\[
		\E_*\|\widehat u^K(T+1)-\widehat u^K(T-\delta_j)\|_{\H_{-r}}^q
		\le C_{2,1}^K(q,q',r)\delta_j^\gamma.
		\]
		Since $\widehat u^K(T+1)=\widehat u^K(T)$ and
		$\widehat u^K(T-\delta_j)\to\widehat u^K(T-)$ as $j\to\infty$,
		Fatou's lemma gives
		$\E_*\|\widehat u^K(T)-\widehat u^K(T-)\|_{\H_{-r}}^q=0$. Thus
		$\widehat u^K$ is almost surely continuous at $T$.
		The restriction map from $\bbD([0,\infty);\H_{-r})$ to
		$\Path{\H_{-r}}$ is continuous at every path that is continuous at
		$T$; this follows from \cite{EK}*{Chapter~3, Proposition~5.2}. The restriction of
		$\widehat u_n^K$ to $[0,T]$ is $u_n^K$. By the continuous mapping
		theorem, $u_n^K$ converges in law in $\Path{\H_{-r}}$ along the same
		subsequence. Hence every subsequence of $(u_n^K)$ has a further
		subsequence that converges in law in $\Path{\H_{-r}}$. Since
		$\Path{\H_{-r}}$ is Polish, Prokhorov's theorem \cite{EK}*{Chapter~3, Theorem~2.2} shows that the laws of
		$u_n^K$ are tight in $\Path{\H_{-r}}$.

		\emph{Function space.} Since
		$\H_\eta\hookrightarrow \L^2(D)\hookrightarrow \H_{-r}$, with the first
		embedding compact, the sets
		\[
		\calB_R:=\left\{v:
		\|v\|_{\L^q(0,T;\H_\eta)}\le R,\quad
		\|v\|_{W^{\vartheta,q}(0,T;\H_{-r})}\le R\right\}
		\]
		are relatively compact in $\L^q(0,T;\L^2(D))$ by
		\cite{Simon}*{Corollary~5, p.~86}, applied with
		$X=\H_\eta$, $B=\L^2(D)$ and $Y=\H_{-r}$.
		Both time-integrability exponents in that corollary are chosen equal to
		$q$, and its fractional order is $\vartheta>0$.
		Since $\operatorname{Leb}(D)=1$ and $q<2$, the embedding
		$\L^q(0,T;\L^2(D))\hookrightarrow \L^q((0,T)\times D)$ is continuous.
		Thus the closure of $\calB_R$ in $\L^q((0,T)\times D)$ is compact.
		Lemmas~\ref{lem2}(a) and~\ref{lem:time-increments} give
		\[
		\sup_{n\ge1}\P(u_n^K\notin\calB_R)
		\le\bigl[C_{1,1}^K(q,\eta)+C_{2,2}^K(q,q',r,\vartheta)\bigr]R^{-q},
		\]
		which proves tightness in $\L^q((0,T)\times D)$.
	\end{proof}
	
	\subsection{Full-noise approximations}
	\label{subsec:full-noise-approximations}
	
	We now remove the cutoff and construct the full-noise approximations.
	For each $n\ge1$ we define the predictable full-noise solution $u_n$ by
	\begin{equation}\label{eq:full-approximation}
		u_n(t,x):=u_n^K(t,x)\quad\text{on }\{t\le\tauK\},
		\qquad\P\text{-a.s.},
	\end{equation}
	for every fixed $(t,x)\in[0,T]\times D$ and integer $K\ge1$.
	
	We use the standard construction by truncation and pasting from
	\cite{Balan14}*{Section~6} and
	\cite{CDH}*{Proposition~2.1 and the proof of Proposition~2.3}.
	This family satisfies the consistency property
	$u_n^{K'}(t,x)=u_n^K(t,x)$, $\P$-a.s.\ on $\{t<\tau_{K'}\}$,
	for $1\le K'<K$ and every fixed $(t,x)$, so that
	\eqref{eq:full-approximation} is well defined.
	For each fixed $n$, the resulting $u_n$ is unique up to modification among
	predictable mild solutions $v$ with coefficients $b_n,\sigma_n$, initial
	condition $u_0$ and noise $L$, satisfying
	\[
	\sup_{(t,x)\in[0,T]\times D}
	\E\bigl[|v(t,x)|^q\ind{t\le\tauK}\bigr]<\infty,
	\qquad \alpha<q\le2,\quad K\ge1.
	\]
	Moreover, \eqref{eq:uniform-moment-bound} gives the uniform estimate
	\[
	\sup_{n\ge1}\ \sup_{(t,x)\in[0,T]\times D}
	\E\bigl[|u_n(t,x)|^q\ind{t\le\tauK}\bigr]
	\le\Cbound{q}{K},\qquad \alpha<q\le2,\quad K\ge1.
	\]
	
	We work with the c\`adl\`ag $\H_{-r}$-valued modification of $u_n$, for $r>1/2$,
	given by the argument of \cite{CDH}*{Theorem~2.5 and Remark~2.4},
	after rescaling the interval and heat operator.
	Indeed, $\sigma_n$ is bounded, while $P_tu_0$ and the convolution with
	the bounded drift $b_n(u_n)$ have continuous $\L^2(D)$-valued paths.
	
	The following proposition is inspired in part by the approach of
	Chong--Delerue \cite{ChongDelerue20}, who consider jointly the random field
	and its c\`adl\`ag modification in a negative Sobolev space.
	
	\begin{proposition}\label{prop:full-tightness}
		Let $\alpha<q<2$ and $r>1/q$. The laws of the pairs $(u_n,u_n)$,
		$n\ge1$, are tight in
		\[
		\Path{\H_{-r}}\times \L^q((0,T)\times D),
		\]
		with the Skorokhod $\mathrm{J}_1$ topology in the first factor and the norm
		topology in the second. In the pair $(u_n,u_n)$, the first coordinate is the state process $t\mapsto u_n(t)$, viewed as an element of $\Path{\H_{-r}}$, and the second coordinate is the predictable field $(t,x)\mapsto u_n(t,x)$, viewed as an element of $\L^q((0,T)\times D)$.
		
	\end{proposition}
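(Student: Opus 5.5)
Tightness of a pair follows from tightness of each marginal: if $C_1\subset\Path{\H_{-r}}$ and $C_2\subset\L^q((0,T)\times D)$ are compact with small exceedance probabilities, then $C_1\times C_2$ is compact in the product. So it suffices to prove that the laws of $u_n$ are tight in $\Path{\H_{-r}}$ and in $\L^q((0,T)\times D)$ separately. The idea is to transfer Lemma~\ref{lem:truncated-tightness} from $u_n^K$ to $u_n$, using the fact that the two agree before $\tauK$ and that $\P(\tauK\le T)$ is small uniformly in $n$ for large $K$.

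\emph{Step 1 (coupling).} On $\{\tauK>T\}=\{\tauK=\infty\}$, we have $u_n(t,x)=u_n^K(t,x)$ a.s.\ for each fixed $(t,x)$, by \eqref{eq:full-approximation}. By Fubini, this gives $u_n=u_n^K$ $(\P\otimes\operatorname{Leb})$-a.e.\ on $\{\tauK=\infty\}\times\calZ_T$. Hence the two fields coincide as elements of $\L^q((0,T)\times D)$, a.s.\ on that event. For the state processes, equality of the pairings $\langle\cdot,e_k\rangle$ at each fixed time holds a.s.\ on $\{\tauK=\infty\}$. Taking rational times and $T$, and using that both processes are c\`adl\`ag in $\H_{-r}$, gives $u_n(t)=u_n^K(t)$ for all $t\in[0,T]$, a.s.\ on $\{\tauK=\infty\}$. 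The one point to check is that the c\`adl\`ag modifications used for $u_n$ (from \cite{CDH}) and for $u_n^K$ (Lemma~\ref{lem2}(b)) are both determined by their Fourier coefficients at fixed times. This holds because each equals the $\L^2(D)$ field at fixed times.

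\emph{Step 2 (transfer of tightness).} Fix $\varepsilon>0$. Since $\P(\tauK\le T)=1-\exp(-T\nu_\alpha(\{|z|>K\}))\le T\frac{2c_\alpha}{\alpha}K^{-\alpha}$ by \eqref{eq:noise-constants}, independently of $n$, choose $K$ with $\P(\tauK\le T)<\varepsilon/2$. Lemma~\ref{lem:truncated-tightness} requires $q<2$ and $r>1/q$, which are exactly our hypotheses. It yields a compact set $C\subset\Path{\H_{-r}}$ with $\sup_n\P(u_n^K\notin C)<\varepsilon/2$. Then
\[
\P(u_n\notin C)\le\P(\tauK\le T)+\P(u_n^K\notin C)<\varepsilon
\]
for every $n$. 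The same argument gives a compact set in $\L^q((0,T)\times D)$. Combining the two marginal compacts as described above proves the proposition.

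\emph{Main obstacle.} The delicate point is Step 1 for the path component: we need almost sure equality of whole paths, not just equality at fixed times, and the $\H_{-r}$-valued version of $u_n$ is built by a different route. I would handle this through countably many coordinates $\langle\cdot,e_k\rangle$ at rational times. Each coordinate equals $\int_D u_n(t,x)e_k(x)\dd x$, where the field identity holds $\operatorname{Leb}$-a.e.\ in $x$ almost surely by Fubini. Right-continuity of both paths then extends the equality to all $t\in[0,T]$.
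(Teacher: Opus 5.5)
Your proposal is correct and is essentially the paper's argument: identify $u_n$ with $u_n^K$ on $\{\tauK=\infty\}$, both as paths in $\Path{\H_{-r}}$ and as elements of $\L^q((0,T)\times D)$, choose $K$ so that $\P(\tauK\le T)$ is small uniformly in $n$, and take the product of the compact sets from Lemma~\ref{lem:truncated-tightness}. The only differences are bookkeeping (the paper counts $\P(\tauK\le T)$ once in a single union bound for the product set) and that the paper states explicitly what your Step~1 gives implicitly: since the events $\{\tauK=\infty\}$ exhaust $\Omega$ up to a null set, $(u_n,u_n)$ does take values in the product space.
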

	\begin{proof}
		By \eqref{eq:full-approximation}, Fubini and the c\`adl\`ag modifications,
		$u_n=u_n^K$ in $\Path{\H_{-r}}$ and in $\L^q((0,T)\times D)$
		on $\{\tauK=\infty\}$, $\P$-a.s.
		These events increase to a set of probability one, so $(u_n,u_n)$
		takes values in $\Path{\H_{-r}}\times \L^q((0,T)\times D)$.
		
		Fix $\varepsilon>0$ and choose an integer $K\ge1$ such that
		$\P(\tauK\le T)<\varepsilon/2$.
		By Lemma~\ref{lem:truncated-tightness}, there are compact sets
		$\calA\subset\Path{\H_{-r}}$ and
		$\calB\subset \L^q((0,T)\times D)$ such that
		\[
		\sup_{n\ge1}\P(u_n^K\notin\calA)<\frac{\varepsilon}{4},
		\qquad
		\sup_{n\ge1}\P(u_n^K\notin\calB)<\frac{\varepsilon}{4}.
		\]
		Consequently,
		\[
		\sup_{n\ge1}\P\bigl((u_n,u_n)\notin\calA\times\calB\bigr)
		\le \P(\tauK\le T)
		+\sup_{n\ge1}\P(u_n^K\notin\calA)
		+\sup_{n\ge1}\P(u_n^K\notin\calB)
		<\varepsilon.
		\]
		The product $\calA\times\calB$ is compact, proving the claim.
	\end{proof}
	
	\begin{lemma}\label{lem:full-integrability-probability}
		Let $\alpha<q'\le2$. For every $R>0$ and integer $K\ge1$,
		\[
		\sup_{n\ge1}\P\left(\int_{\calZ_T}|u_n(s,y)|^{q'}\dd s\dd y>R\right)
		\le \P(\tauK\le T)+\frac{C_{3,1}^K(q')}{R},
		\qquad C_{3,1}^K(q'):=T\Cbound{q'}{K}.
		\]
		The constant $C_{3,1}^K(q')$ has the dependencies of
		$\Cbound{q'}{K}$ and is independent of $n$ and $\Lambda_n$;
		it is also listed in Appendix~\ref{app:constants}.
		Consequently,
		\[
		\lim_{R\to\infty}\sup_{n\ge1}
		\P\left(\int_{\calZ_T}|u_n(s,y)|^{q'}\dd s\dd y>R\right)=0,
		\]
		and $u_n\in \L^{q'}((0,T)\times D)$, $\P$-a.s., for every $n$.
	\end{lemma}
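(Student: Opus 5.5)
The plan is to split the event according to whether the large-jump stopping time has occurred before $T$, and then use Markov's inequality on the complement together with the uniform moment bound. Fix $R>0$ and an integer $K\ge1$. Writing $I_n:=\int_{\calZ_T}|u_n(s,y)|^{q'}\dd s\dd y$, I will bound
\[
\P(I_n>R)\le \P(\tauK\le T)+\P\bigl(I_n>R,\ \tauK>T\bigr).
\]
Because $\tauK$ takes values in $[0,T]\cup\{\infty\}$, on $\{\tauK>T\}$ we have $\mathbf1_{\{s\le\tauK\}}=1$ for all $s\le T$. Hence $I_n\mathbf1_{\{\tauK>T\}}\le\int_{\calZ_T}|u_n(s,y)|^{q'}\mathbf1_{\{s\le\tauK\}}\dd s\dd y$.

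For the second term, Markov's inequality and Tonelli's theorem give
\[
\P\bigl(I_n>R,\ \tauK>T\bigr)\le\frac1R\int_{\calZ_T}\E\bigl[|u_n(s,y)|^{q'}\mathbf1_{\{s\le\tauK\}}\bigr]\dd s\dd y.
\]
Joint measurability of the predictable field $u_n$ justifies Tonelli here. The integrand is at most $\Cbound{q'}{K}$ by the uniform estimate stated after \eqref{eq:full-approximation}, which itself comes from \eqref{eq:uniform-moment-bound}. Since $\operatorname{Leb}(\calZ_T)=T$, the right-hand side is at most $T\Cbound{q'}{K}/R=C_{3,1}^K(q')/R$, uniformly in $n$. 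If one prefers to avoid the pasted uniform estimate, the same bound follows directly: by \eqref{eq:full-approximation}, $u_n=u_n^K$ for $\operatorname{Leb}$-a.e.\ $(s,y)$ on $\{\tauK=\infty\}$ (Fubini applied to a.s.\ equality at fixed points), and then \eqref{eq:src} applies.

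For the consequences, take $\sup_n$ and let $R\to\infty$ to get $\limsup_{R}\sup_n\P(I_n>R)\le\P(\tauK\le T)$ for every $K$. Since $\tauK=\infty$ for all large $K$ a.s., we have $\P(\tauK\le T)\to0$, which gives the limit statement. Finally, $\P(I_n=\infty)\le\P(I_n>R)$ for all $R$, so $I_n<\infty$ a.s. for each $n$.

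The only delicate point is the measure-theoretic one: the identification $u_n=u_n^K$ holds only at fixed $(t,x)$ almost surely. Converting this into a statement about the integral on $\{\tauK>T\}$ requires Tonelli on $\Omega\times\calZ_T$, which in turn needs the predictable, hence jointly measurable, version of $u_n$. Working with the indicator $\mathbf1_{\{s\le\tauK\}}$ inside the expectation avoids this issue altogether.
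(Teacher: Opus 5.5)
Your proposal is correct and is essentially the paper's proof: you split on $\{\tau_K\le T\}$ versus $\{\tau_K=\infty\}$, apply Markov's inequality with the $K$-truncated moment bound of Lemma~\ref{lem1}, and then let $R\to\infty$ and $K\to\infty$. The only minor difference is that you put $\mathbf 1_{\{s\le\tau_K\}}$ inside the expectation and use the stopped moment estimate, whereas the paper uses the Leb-a.e.\ identification $u_n=u_n^K$ on $\{\tau_K=\infty\}$, which you also give as an alternative.
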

	\begin{proof}
		On $\{\tauK=\infty\}$, the fields $u_n$ and $u_n^K$ agree
		Leb-a.e.\ on $\calZ_T$, $\P$-a.s.
		Splitting according to this event, then applying Markov's inequality
		and Lemma~\ref{lem1} with exponent $q'$, gives the bound in the lemma.
		For any $\varepsilon>0$, choose $K$ so that
		$\P(\tauK\le T)<\varepsilon/2$, then choose
		$R>2T\Cbound{q'}{K}/\varepsilon$.
		This proves the limit and hence the almost sure finiteness of each integral.
	\end{proof}
	
	\subsection{Joint convergence on a new probability space}
	\label{subsec:joint-convergence}
	
	Fix $r>1/2$ and choose $q$ with $\max\{\alpha,1/r\}<q<2$, so that
	Proposition~\ref{prop:full-tightness} applies. Set
	\[
	\calD:=\Path{\H_{-r}},\qquad
	\calL:=\L^q((0,T)\times D).
	\]
	Both spaces are Polish with the topologies of
	Proposition~\ref{prop:full-tightness}. By that proposition and
	Prokhorov's theorem \cite{EK}*{Chapter~3, Theorem~2.2}, there is a subsequence, which we still denote by
	$n$, along which the laws of $(u_n,u_n)$ converge weakly on
	$\calD\times\calL$. Since $\calD\times\calL$ is
	separable, Skorokhod's representation theorem
	\cite{EK}*{Chapter~3, Theorems~1.8 and~3.1(a)--(b)} then gives a probability
	space $(\overline\Omega,\overline\Fil,\overline\P)$ and random
	elements $(\widetilde u_n,\widetilde v_n)$ and $(\widetilde u,\widetilde v)$
	such that
	\begin{equation}\label{eq:skor}
		\begin{gathered}
			(\widetilde u_n,\widetilde v_n)\overset{d}{=}(u_n,u_n),\qquad n\ge1,\\
			\widetilde u_n\longrightarrow\widetilde u\ \text{in }\calD
			\quad\text{and}\quad
			\widetilde v_n\longrightarrow\widetilde v\ \text{in }\calL,
			\qquad\overline\P\text{-a.s.}
		\end{gathered}
	\end{equation}
	
	We now check that the limit field $\widetilde v$ agrees with the limit
	state $\widetilde u$ when tested against functions in $\calE$.
	We first explain why this agreement is preserved by convergence in
	$\calD\times\calL$.
	Let $\calG$ be the set of pairs $(f,g)$ in this product satisfying
	\[
	\langle f(t),e_k\rangle=\int_D g(t,y)e_k(y)\dd y
	\quad\text{for a.e. }t\in(0,T),\quad k\ge1.
	\]
	Here $f(t)$ is an element of $\H_{-r}$, whereas $g$ is a space--time
	function. In other words, $\calG$ consists of the pairs
	$(f,g)\in\calD\times\calL$ in which $g$ is a density of the
	$\H_{-r}$-valued path $f$, in the sense of
	Definition~\ref{def:mild-solution}, for almost every time instead of
	every fixed time. 
	
	For each $k$, the equality holds for all $t$ outside a set
	$\Xi_k\subset(0,T)$ of Lebesgue measure zero. Since there are
	countably many $k$, the union $\Xi:=\bigcup_k\Xi_k$ also has Lebesgue
	measure zero, and for $t\notin\Xi$ the equality holds for all $k$ at
	the same time. By linearity, it then holds for every
	$\varphi\in\calE$, since each $\varphi\in\calE$ is a finite
	linear combination of the $e_k$.
	
	We now show that $\calG$ is closed. Let $(f_j,g_j)\in\calG$
	with $f_j\to f$ in $\calD$ and $g_j\to g$ in $\calL$.
	Convergence in the $\mathrm{J}_1$ topology implies that
	$\sup_j\sup_{t\le T}\|f_j(t)\|_{\H_{-r}}<\infty$ and that
	$f_j(t)\to f(t)$ in $\H_{-r}$ at every time $t$ where $f$ is continuous,
	that is, for all but countably many $t$. For fixed $k$, the bound
	$|\langle h,e_k\rangle|\le\|h\|_{\H_{-r}}\|e_k\|_{\H_r}$ and dominated
	convergence therefore give
	\[
	\int_0^T|\langle f_j(t)-f(t),e_k\rangle|\dd t\longrightarrow0.
	\]
	For the fields, H\"older's inequality and $\operatorname{Leb}(D)=1$ give
	\[
	\int_0^T\left|\int_D(g_j(t,y)-g(t,y))e_k(y)\dd y\right|\dd t
	\le\sqrt2\,T^{1-1/q}\|g_j-g\|_{\calL}\longrightarrow0.
	\]
	The two pairings agree for each $(f_j,g_j)$ for almost every $t$, so
	their limits in $\L^1(0,T)$ also agree, and the pairings for
	$(f,g)$ coincide for almost every $t$. Thus $(f,g)\in\calG$, and
	$\calG$ is closed. In particular, $\calG$ is a Borel set, so
	events of the form $\{(X,Y)\in\calG\}$ are measurable.
	
	For the original solutions, the pairings of the state and of the field
	with each $e_k$ agree almost surely at each fixed time. By Fubini's
	theorem, they agree for almost every $t$, almost surely, and since
	there are countably many $e_k$, we obtain
	$\P((u_n,u_n)\in\calG)=1$ for every $n$. Since $\calG$ is
	a Borel set and $(\widetilde u_n,\widetilde v_n)$ has the same law as
	$(u_n,u_n)$, it follows that
	$\overline\P((\widetilde u_n,\widetilde v_n)\in\calG)=1$.
	There are countably many $n$, so the intersection of these events with
	the event in \eqref{eq:skor} still has probability one. On this
	intersection, the pair $(\widetilde u,\widetilde v)$ belongs to
	$\calG$ because $\calG$ is closed, that is,
	\[
	\langle\widetilde u(t),\varphi\rangle
	=\int_D\widetilde v(t,y)\varphi(y)\dd y
	\quad\text{for a.e. }t\in(0,T)\text{ and all }\varphi\in\calE.
	\]
	
	The elements of $\calL$ are functions defined up to equality
	Leb-a.e.\ in space--time. We choose versions of $\widetilde v_n$ and
	$\widetilde v$ that are measurable in $(\omega,t,y)$, and we denote
	them by $\widetilde u_n(t,y)$ and $\widetilde u(t,y)$. We keep the
	notation $\widetilde u_n(t)$ and $\widetilde u(t)$ for the
	$\H_{-r}$-valued state processes. So far, the state and the field agree
	for almost every time; the identification at each fixed time is proved
	at the end of Section~\ref{subsec:stable-noise-reconstruction}.
	
	We also need convergence of the fields at almost every space--time point.
	To obtain it, choose a further subsequence, independent of $\omega$ and
	again indexed by $n$, such that
	\[
	\overline\P\bigl(
	\|\widetilde v_n-\widetilde v\|_{\calL}^q>2^{-n}\bigr)<2^{-n}.
	\]
	This is possible because the norms tend to zero almost surely, hence
	in probability. The sum of these probabilities is finite.
	The Borel--Cantelli lemma therefore implies that, almost surely,
	$\|\widetilde v_n-\widetilde v\|_{\calL}^q\le2^{-n}$ for all
	sufficiently large $n$. For each such $\omega$, Tonelli's theorem gives
	\[
	\int_{\calZ_T}\sum_{n\ge1}
	|\widetilde u_n(t,y)-\widetilde u(t,y)|^q\dd t\dd y
	=\sum_{n\ge1}\|\widetilde v_n-\widetilde v\|_{\calL}^q<\infty.
	\]
	The series inside the integral is therefore finite for almost every
	$(t,y)$, so its terms tend to zero. We have proved
	\[
	\widetilde u_n(t,y)\longrightarrow\widetilde u(t,y)
	\quad\text{for Leb-a.e. }(t,y)\in\calZ_T,
	\qquad\overline\P\text{-a.s.}
	\]
	For $\alpha<q'\le2$, define
	\[
	\Theta_{q'}(R):=\sup_{n\ge1}\P\left(
	\int_{\calZ_T}|u_n(s,y)|^{q'}\dd s\dd y>R\right),
	\qquad R>0.
	\]
	Lemma~\ref{lem:full-integrability-probability} gives
	$\Theta_{q'}(R)\to0$ as $R\to\infty$.
	The map $g\mapsto\int_{\calZ_T}|g|^{q'}$, with values in
	$[0,\infty]$, is Borel on $\calL$ (in fact, lower semicontinuous).
	Hence, equality in law in \eqref{eq:skor} gives the same tail bound for
	the fields $\widetilde u_n$. Fatou's lemma, applied first in space--time and then
	to the indicators of the tail events, yields
	\begin{equation}\label{eq:limit-tail}
		\overline\P\left(\int_{\calZ_T}
		|\widetilde u(s,y)|^{q'}\dd s\dd y>R\right)
		\le\Theta_{q'}(R).
	\end{equation}
	Indeed, if the limit integral exceeds $R$, the approximating integrals
	eventually exceed $R$ as well.
	Taking $q'=2$ gives
	\begin{equation}\label{eq:limit-L2}
		\int_{\calZ_T}|\widetilde u(s,y)|^2\dd s\dd y<\infty,
		\qquad\overline\P\text{-a.s.}
	\end{equation}

	\begin{lemma}\label{lem:nonlinear-convergence}
		Let $\alpha<q'\le2$ and $0<\beta<q'$. Let
		$\Psi_n,\Psi:D\times\R\to\R$ be measurable and continuous in their
		second variable. Suppose that, for every $a>0$,
		\[
		\sup_{y\in D}\sup_{|\xi|\le a}|\Psi_n(y,\xi)-\Psi(y,\xi)|\longrightarrow0,
		\qquad
		|\Psi_n(y,\xi)|+|\Psi(y,\xi)|\le C_\Psi(1+|\xi|^\beta),
		\]
		where $C_\Psi\ge0$ is independent of $n,y,\xi$. Then
		\[
		\int_{\calZ_T}
		|\Psi_n(y,\widetilde u_n(s,y))-\Psi(y,\widetilde u(s,y))|\dd s\dd y
		\longrightarrow0\quad\text{in }\overline\P\text{-probability}.
		\]
		In particular, taking $q'=2$, for every fixed $\varphi\in\calE$,
		\begin{align*}
			&\sup_{t\le T}\left|\int_{\calZ_t}
			\bigl(|\sigma_n(\widetilde u_n(s,y))\varphi(y)|^\alpha
			-|\sigma(\widetilde u(s,y))\varphi(y)|^\alpha\bigr)\dd s\dd y\right|
			\longrightarrow0,\\
			&\sup_{t\le T}\left|\int_{\calZ_t}
			\bigl(b_n(\widetilde u_n(s,y))-b(\widetilde u(s,y))\bigr)
			\varphi(y)\dd s\dd y\right|\longrightarrow0
			\quad\text{in }\overline\P\text{-probability}.
		\end{align*}
	\end{lemma}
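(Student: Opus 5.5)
We prove the convergence in probability by a truncation argument that uses the almost everywhere convergence $\widetilde u_n\to\widetilde u$ on $\calZ_T$ (established above, $\overline\P$-a.s.) together with the uniform tail bound $\Theta_{q'}(R)\to0$ and its limit version \eqref{eq:limit-tail}. Fix $\varepsilon>0$ and $a>0$. On $\calZ_T$ we split the integrand according to whether $|\widetilde u_n(s,y)|\le a$ and $|\widetilde u(s,y)|\le a$, or not. On the bounded part we write
\[
|\Psi_n(y,\widetilde u_n)-\Psi(y,\widetilde u)|
\le\sup_{y\in D}\sup_{|\xi|\le a}|\Psi_n(y,\xi)-\Psi(y,\xi)|
+|\Psi(y,\widetilde u_n)-\Psi(y,\widetilde u)|.
\]
The first term tends to zero deterministically, since $\operatorname{Leb}(\calZ_T)=T$. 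The second term tends to zero pointwise a.e.\ by continuity of $\Psi(y,\cdot)$, and it is bounded by $2C_\Psi(1+a^\beta)$. Dominated convergence therefore gives $\overline\P$-a.s.\ convergence of its integral over $\calZ_T$.

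On the complementary set, where $|\widetilde u_n|>a$ or $|\widetilde u|>a$, the growth bound controls the integrand by $C_\Psi\bigl(2+|\widetilde u_n|^\beta+|\widetilde u|^\beta\bigr)$. Since $\beta<q'$, we have $1+|\xi|^\beta\le 2a^{\beta-q'}|\xi|^{q'}$ for $|\xi|>a\ge1$. We also need the cross terms, where one field is large and the other is not. If $|\widetilde u|\le a<|\widetilde u_n|$, then $|\widetilde u|^\beta\le|\widetilde u_n|^\beta$, and symmetrically in the other case. Hence the integral over the complementary set is at most
\[
4C_\Psi a^{\beta-q'}\Bigl(\int_{\calZ_T}|\widetilde u_n|^{q'}+\int_{\calZ_T}|\widetilde u|^{q'}\Bigr).
\]
The tail bound transfers to $\widetilde u_n$ by equality in law, as noted before \eqref{eq:limit-tail}, and holds for $\widetilde u$ by \eqref{eq:limit-tail}. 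So we first choose $R$ with $\Theta_{q'}(R)<\varepsilon$ and then $a$ large enough that $8C_\Psi a^{\beta-q'}R<\varepsilon$. With probability at least $1-2\varepsilon$, uniformly in $n$, the large-value contribution is then below $\varepsilon$. Combined with the bounded part, this gives $\limsup_n\overline\P(\int|\cdots|>3\varepsilon)\le2\varepsilon$, which is the claimed convergence in probability. The only delicate step is this uniform-in-$n$ control of large values, and the strict gap $\beta<q'$ is exactly what makes it work.

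For the two consequences we take $q'=2$ and check the hypotheses. For the drift, set $\Psi_n(y,\xi)=b_n(\xi)\varphi(y)$ and $\Psi(y,\xi)=b(\xi)\varphi(y)$. Local uniform convergence $b_n\to b$ holds by construction \eqref{eq:approx-coefficients}, and \eqref{eq:approx-growth} gives the growth bound with $\beta=1<2$. For the noise, set $\Psi_n(y,\xi)=|\sigma_n(\xi)\varphi(y)|^\alpha$ and $\Psi(y,\xi)=|\sigma(\xi)\varphi(y)|^\alpha$. Local uniform convergence follows from that of $\sigma_n$, because $x\mapsto|x|^\alpha$ is uniformly continuous on bounded sets. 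The growth bound $C(1+|\xi|^\alpha)$ has $\beta=\alpha<2$. Finally, the supremum over $t\le T$ of $|\int_{\calZ_t}(\cdots)|$ is bounded by the integral of the absolute difference over $\calZ_T$, so both displayed statements follow from the main claim.
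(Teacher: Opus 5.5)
Your proof is correct and is essentially the paper's argument. Both use the almost-everywhere convergence of $\widetilde u_n$, the uniform tail bound $\Theta_{q'}(R)$ on the event where both $q'$-integrals are at most $R$, the gap $\beta<q'$ to make the large-value contribution a negative power of $a$ times $R$, and dominated convergence on the truncated part. The only difference is where you truncate: the paper truncates the values $F_n=\Psi_n(\cdot,\widetilde u_n)$ and $F=\Psi(\cdot,\widetilde u)$ through $\pi_a$, using $|F_n|\mathbf{1}_{\{|F_n|>a\}}\le(2C_\Psi)^{q'/\beta}a^{1-q'/\beta}|\widetilde u_n|^{q'}$, whereas you split $\calZ_T$ by the size of $\widetilde u_n,\widetilde u$ and apply the local uniform convergence of $\Psi_n$ directly on the bounded region.
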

	
	\begin{proof}
		Write $F_n(s,y)=\Psi_n(y,\widetilde u_n(s,y))$ and
		$F(s,y)=\Psi(y,\widetilde u(s,y))$. Since
		$\widetilde u_n\to\widetilde u$ almost everywhere, $\Psi_n\to\Psi$
		uniformly on bounded sets of $\xi$, and $\Psi$ is continuous in $\xi$,
		we have $F_n\to F$ almost everywhere, almost surely. If $C_\Psi=0$
		there is nothing to prove, so let $C_\Psi>0$.
		
		Let $\pi_a(z)=\max\{-a,\min\{z,a\}\}$ be the function that cuts $z$ at
		level $a$, as in Section~\ref{subsec:fixed-cutoff} with $a$ in place of $n$, and let
		\[
		E_{n,R}:=\left\{\int_{\calZ_T}|\widetilde u_n|^{q'}\dd s\dd y\le R,
		\quad\int_{\calZ_T}|\widetilde u|^{q'}\dd s\dd y\le R\right\}.
		\]
		By \eqref{eq:limit-tail} and equality in law,
		$\overline\P(E_{n,R}^c)\le2\Theta_{q'}(R)$.
		
		Take $a\ge2C_\Psi$. If $|F_n|>a$, the growth bound shows that
		$|\widetilde u_n|\ge1$, so $|F_n|\le2C_\Psi|\widetilde u_n|^\beta$ and
		$|\widetilde u_n|^\beta>a/(2C_\Psi)$. Since $\beta<q'$, this gives
		\[
		|F_n|\ind{|F_n|>a}
		\le(2C_\Psi)^{q'/\beta}a^{1-q'/\beta}|\widetilde u_n|^{q'}.
		\]
		The same bound holds for $F$, with $\widetilde u$ in place of
		$\widetilde u_n$. On $E_{n,R}$ we therefore get
		\[
		\int_{\calZ_T}|F_n-F|\dd s\dd y
		\le\int_{\calZ_T}|\pi_a(F_n)-\pi_a(F)|\dd s\dd y
		+2C_{3,2}\,a^{1-q'/\beta}R,
		\]
		where $C_{3,2}:=(2C_\Psi)^{q'/\beta}$. For fixed $a$, the first
		integral on the right goes to zero almost surely by dominated
		convergence.
		
		Now let $\varepsilon>0$. First choose $R$ so that
		$2\Theta_{q'}(R)<\varepsilon$. Then choose $a\ge2C_\Psi$ so that the
		last term is less than $\varepsilon$. Finally, take $n$ large enough
		so that the first integral is larger than $\varepsilon$ with
		probability less than $\varepsilon$. For such $n$,
		\[
		\overline\P\left(\int_{\calZ_T}|F_n-F|\dd s\dd y>2\varepsilon\right)
		<2\varepsilon,
		\]
		which proves the convergence in probability.
		
		It remains to prove the two limits in the last part of the lemma,
		for $\sigma$ and for $b$. We take $q'=2$. For the limit with
		$\sigma$, take $\Psi_n(y,\xi)=|\sigma_n(\xi)\varphi(y)|^\alpha$ and
		$\beta=\alpha$; for the limit with $b$, take
		$\Psi_n(y,\xi)=b_n(\xi)\varphi(y)$ and $\beta=1$. In both cases,
		$\Psi$ is obtained by replacing $\sigma_n$ and $b_n$ by $\sigma$ and
		$b$. All these coefficients are bounded by $2\Cgrow(1+|\xi|)$, so we
		can take $C_\Psi=2^{2\alpha}\Cgrow^\alpha\|\varphi\|_{\L^\infty(D)}^\alpha$
		for $\sigma$ and $C_\Psi=4\Cgrow\|\varphi\|_{\L^\infty(D)}$ for $b$. Finally,
		in each case the supremum over $t\le T$ is at most the integral of
		the absolute difference over $\calZ_T$.
	\end{proof}
	
	\section{Martingale problem and noise reconstruction}\label{sec:main-proof}
	
	The joint limit does not retain the original driving noise. In this section, we first show that the limit $\widetilde u$ satisfies the exponential martingale identities of
	Proposition~\ref{prop:limit-martingale}, which characterize the stable noise term. We then reconstruct a driving noise on an enlarged stochastic basis.
	
	\subsection{Exponential martingales for the approximations}
	\label{subsec:approximation-martingales}
	
	The full stable noise has no finite second moment, so we use its
	characteristic exponent to identify the noise term in the limit.
	We follow the characteristic-function martingale approach of Mytnik
	\cite{Mytnik02}*{Proposition~4.1, (4.2)}, adapted here to symmetric
	stable noise. Multiplying by the integrating factor $e^{|\theta|^\alpha A_n^\varphi}$
	turns the identity given by It\^o's formula into the exponential local
	martingale $Z_n^{\varphi,\theta}$ in \eqref{eq:exponential-martingale}.
	Both quantities in its exponent, $M_n^\varphi$ and $A_n^\varphi$ below, are functions of the solution $u_n$. After stopping, the resulting
	martingales are bounded. So, by equality in law, these martingale identities also hold for the
	processes $\widetilde u_n$ of \eqref{eq:skor}, and they pass to the limit
	$\widetilde u$. We then use them to reconstruct the driving noise.
	
	On the original stochastic basis, fix $n\ge1$ and
	$\varphi\in\calE$, and set, for $t\in[0,T]$,
	\begin{equation}\label{eq:martingale-coordinates}
		\begin{aligned}
			M_n^\varphi(t)&:=\langle u_n(t),\varphi\rangle-\langle u_0,\varphi\rangle
			-\frac12\int_0^t\langle u_n(s),\varphi''\rangle\dd s
			-\int_{\calZ_t}b_n(u_n(s,y))\varphi(y)\dd s\dd y,\\
			A_n^\varphi(t)&:=\int_{\calZ_t}
			|\sigma_n(u_n(s,y))\varphi(y)|^\alpha\dd s\dd y.
		\end{aligned}
	\end{equation}
	Here $u_n(t)$ is the state process and $u_n(s,y)$ is its predictable
	field. The process $A_n^\varphi$ is adapted, continuous and nondecreasing.
	Both $M_n^\varphi$ and $A_n^\varphi$ are well defined because the state
	paths are c\`adl\`ag and, for fixed $n$, $b_n$ and $\sigma_n$ are bounded.
	For $\theta\in\R$, define the complex-valued process
	\begin{equation}\label{eq:exponential-martingale}
		Z_n^{\varphi,\theta}(t)
		:=\exp\bigl(i\theta M_n^\varphi(t)+|\theta|^\alpha A_n^\varphi(t)\bigr),
		\qquad t\in[0,T].
	\end{equation}
	The factor $|\theta|^\alpha$ comes from  \eqref{charact-L}.
	
	\begin{lemma}\label{lem:approximation-martingale}
		For every $n\ge1$, $\varphi\in\calE$ and $\theta\in\R$,
		$Z_n^{\varphi,\theta}$ is a complex-valued $(\Fil_t)$-local martingale
		starting at $1$. For $R>0$, define
		\begin{equation}\label{eq:martingale-clock-stopping}
			\tau_{n,R}^\varphi
			:=\inf\{t\in[0,T]:A_n^\varphi(t)\ge R\}\wedge T,
			\qquad \inf\varnothing=\infty.
		\end{equation}
		Then $Z_n^{\varphi,\theta}(t\wedge\tau_{n,R}^\varphi)$ is a
		bounded complex-valued $(\Fil_t)$-martingale, and
		\[
		\sup_{t\le T}|Z_n^{\varphi,\theta}(t\wedge\tau_{n,R}^\varphi)|
		\le e^{|\theta|^\alpha R},\qquad\P\text{-a.s.}
		\]
		The bound depends only on $\alpha,\theta,R$, and is independent of $n$.
	\end{lemma}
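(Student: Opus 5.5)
The plan is to identify $M_n^\varphi$ as a stochastic integral against $L$, apply It\^o's formula to $e^{i\theta M_n^\varphi}$, and check that the compensator of the jump part is exactly $-|\theta|^\alpha\,\dd A_n^\varphi$; the factor $e^{|\theta|^\alpha A_n^\varphi}$ then cancels it.

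\emph{Identifying $M_n^\varphi$.} The mild solution $u_n$ coincides with $u_n^K$ on $\{t\le\tauK\}$. For the truncated equation, take the spectral modes \eqref{eq:mode}. The integration by parts \eqref{eq:ibp} (equivalently, the relation $\frac{\dd}{\dd t}f_{n,k}^K=-\lambda_kf_{n,k}^K+B_{n,k}^K+\dot M_{n,k}^K$) shows that, for $\varphi\in\calE$ and $\varphi''=-2\lambda_k\varphi$ componentwise, almost surely for all $t\le T$,
\[
M_n^\varphi(t\wedge\tauK)=\int_{\calZ_{t\wedge\tauK}}\varphi(y)\sigma_n(u_n(s,y))\LK(\dd s,\dd y)=\int_{\calZ_{t\wedge\tauK}}\varphi(y)\sigma_n(u_n(s,y))L(\dd s,\dd y).
\]
The last equality uses \eqref{eq:noise-local-property}, upgraded to all $t$ by right-continuity. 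Letting $K\to\infty$, since $\tauK=\infty$ eventually, gives
\[
M_n^\varphi(t)=\int_{\calZ_t}H(s,y)\,L(\dd s,\dd y),\qquad H:=\varphi\,\sigma_n(u_n),
\]
with $H$ predictable and bounded for fixed $n$. By \eqref{eq:Levy-Ito}, $M_n^\varphi$ is a purely discontinuous semimartingale with jumps $H(s,y)z$ at the atoms of $N$. It is the sum of a compensated small-jump integral and a large-jump integral, and it has no drift.

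\emph{It\^o's formula.} Apply It\^o's formula for semimartingales driven by a Poisson random measure to $G(t):=e^{i\theta M_n^\varphi(t)}$:
\[
G(t)=1+\text{(local martingale)}+\int_0^t\!\!\int_D\!\int_{\R\setminus\{0\}}G(s-)\bigl(e^{i\theta H(s,y)z}-1-i\theta H(s,y)z\ind{|z|\le1}\bigr)\nu_\alpha(\dd z)\dd y\dd s.
\]
By symmetry of $\nu_\alpha$, the inner $z$-integral equals $\int(\cos(\theta Hz)-1)\nu_\alpha(\dd z)=-|\theta H|^\alpha$, using the normalization of $c_\alpha$ behind \eqref{charact-L}. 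This is absolutely convergent, and the truncation term vanishes by symmetry. Hence
\[
G(t)=1+\text{loc.\ mart.}-|\theta|^\alpha\int_0^tG(s-)\,\dd A_n^\varphi(s).
\]
Since $A_n^\varphi$ is continuous and of finite variation, the product rule gives $\dd\bigl(Ge^{|\theta|^\alpha A_n^\varphi}\bigr)=e^{|\theta|^\alpha A_n^\varphi}\,\dd(\text{loc.\ mart.})$. So $Z_n^{\varphi,\theta}$ is a local martingale starting at $1$.

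The main technical point is justifying the large-jump part. The integrand $e^{i\theta Hz}-1$ is bounded and $\nu_\alpha(|z|>1)<\infty$, so $\int_{|z|>1}(\cdots)(N-\m)$ is a genuine local martingale. The small-jump part is handled by the square-integrable bound $|e^{ix}-1-ix|\le x^2/2$ together with $m_2(1)<\infty$.

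\emph{Bounding the stopped process.} The process $A_n^\varphi$ is continuous and $A_n^\varphi(0)=0$. Hence $A_n^\varphi(t\wedge\tau_{n,R}^\varphi)\le R$ for all $t$, and $|Z_n^{\varphi,\theta}(t\wedge\tau_{n,R}^\varphi)|=e^{|\theta|^\alpha A_n^\varphi(t\wedge\tau_{n,R}^\varphi)}\le e^{|\theta|^\alpha R}$. A stopped local martingale is again a local martingale, and a bounded local martingale is a true martingale by dominated convergence along a localizing sequence. This gives the martingale property with a bound depending only on $\alpha,\theta,R$.
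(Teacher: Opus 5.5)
Your proof is correct and follows the paper's argument essentially step for step. You identify $M_n^\varphi$ with $\int_{\calZ_t}\varphi\,\sigma_n(u_n)\,\dd L$ through the truncated solutions and apply It\^o's formula to $e^{i\theta M_n^\varphi}$. Symmetry and the normalization of $c_\alpha$ give the compensator $-|\theta|^\alpha e^{i\theta M_n^\varphi(s-)}\dd A_n^\varphi(s)$, which you cancel with the continuous finite-variation factor $e^{|\theta|^\alpha A_n^\varphi}$, and you then stop at $\tau_{n,R}^\varphi$. The paper differs only in bookkeeping: it also compensates the large jumps, using $\alpha>1$ and $\int_{|z|>1}z\,\nu_\alpha(\dd z)=0$, instead of keeping your truncation $\ind{|z|\le1}$.

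One small slip: the equalities in your stopped display fail on $\{\tauK\le t\}$. There $M_n^\varphi$ and $L$ contain the jump of size larger than $K$ at $\tauK$, while $\LK$ does not. State the identity for $t<\tauK$ or on $\{\tauK=\infty\}$, as the paper does; that is all your limit $K\to\infty$ actually uses.
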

	\begin{proof}
		On $\{\tauK=\infty\}$, the defining relation
		\eqref{eq:full-approximation} gives $u_n(t,y)=u_n^K(t,y)$.
		Their weak formulations therefore agree on this
		event, by \eqref{eq:mode} and the local property
		\eqref{eq:noise-local-property}, giving
		\begin{equation}\label{eq:martingale-stochastic-integral}
			M_n^\varphi(t)=\int_{\calZ_t}
			\sigma_n(u_n(s,y))\varphi(y)L(\dd s,\dd y),
			\qquad\forall t\in[0,T].
		\end{equation}
		Then, \eqref{eq:martingale-stochastic-integral} holds for all
		$t\in[0,T]$ on the event $\bigcup_{K\ge1}\{\tauK=\infty\}$,
		which has probability one.
		
		Put $H_n^\varphi(s,y):=\sigma_n(u_n(s,y))\varphi(y)$.
		For fixed $n$, this is a bounded predictable field.
		The integral over $|z|\le1$ is square-integrable. For $|z|>1$, we have
		$\int_{|z|>1}|z|\nu_\alpha(\dd z)<\infty$ and
		$\int_{|z|>1}z\nu_\alpha(\dd z)=0$.
		Thus replacing $N$ by $\Nt$ in the large-jump integral does not change
		its value. We can therefore write $M_n^\varphi$ as the integral of
		$zH_n^\varphi$ against $\Nt$ over all jump sizes; it is a local martingale.
		Set
		\[
		R_\theta(a):=e^{i\theta a}-1-i\theta a,
		\qquad \calQ_t:=\calZ_t\times(\R\setminus\{0\}).
		\]
		Applying It\^o's formula to the real and imaginary parts of
		$e^{i\theta M_n^\varphi}$ (\cite{JS}*{Theorem~I.4.57}), and writing the jump
		sum as an integral against $N=\Nt+\m$, gives
		\[
		\begin{aligned}
			e^{i\theta M_n^\varphi(t)}={}&1+\calM_n^\varphi(t)\\
			&+\int_{\calZ_t}e^{i\theta M_n^\varphi(s-)}
			\left[\int_{\R\setminus\{0\}}
			R_\theta(zH_n^\varphi(s,y))\nu_\alpha(\dd z)\right]\dd s\dd y,
		\end{aligned}
		\]
		where
		\[
		\calM_n^\varphi(t):=\int_{\calQ_t}e^{i\theta M_n^\varphi(s-)}
		\bigl(e^{i\theta zH_n^\varphi(s,y)}-1\bigr)
		\Nt(\dd s,\dd y,\dd z).
		\]
		This is a square-integrable complex-valued martingale. Indeed,
		$H_n^\varphi$ is bounded for fixed $n$, and the integrand has modulus at most
		$|\theta|\|H_n^\varphi\|_{\infty}|z|$ for $|z|\le1$, and at most $2$ for
		$|z|>1$. These bounds are square-integrable against the intensity
		on $\calQ_T$, since $\alpha<2$.
		For every $h\in\R$, symmetry and the normalization of $L$ give
		\[
		\int_{\R\setminus\{0\}}R_\theta(zh)\nu_\alpha(\dd z)
		=-|\theta h|^\alpha.
		\]
		The integral is absolutely convergent, since its integrand is of
		order $z^2$ near zero and at most of order $1+|z|$ at infinity,
		with $1<\alpha<2$. Its imaginary part vanishes by symmetry, and
		its real part is the stable characteristic exponent. Consequently,
		\begin{equation}\label{eq:ito-complex-martingale}
			e^{i\theta M_n^\varphi(t)}=1+\calM_n^\varphi(t)
			-|\theta|^\alpha\int_0^t e^{i\theta M_n^\varphi(s-)}\dd A_n^\varphi(s).
		\end{equation}
		By \eqref{eq:ito-complex-martingale} and the product rule, it follows that
		\[
		\dd Z_n^{\varphi,\theta}(t)
		=e^{|\theta|^\alpha A_n^\varphi(t)}\dd\calM_n^\varphi(t).
		\]
		Since $M_n^\varphi(0)=A_n^\varphi(0)=0$, $Z_n^{\varphi,\theta}$ has the stochastic-integral
		representation
		\[
		Z_n^{\varphi,\theta}(t)
		=1+\int_0^t e^{|\theta|^\alpha A_n^\varphi(s)}\dd\calM_n^\varphi(s).
		\]
		The integrand is predictable and locally bounded, because $A_n^\varphi$ is
		adapted and continuous. Hence, this stochastic integral is a
		complex-valued local martingale, and $Z_n^{\varphi,\theta}(0)=1$.
		
		Since $A_n^\varphi$ is adapted and continuous,
		$\tau_{n,R}^\varphi$ is a stopping time and
		$A_n^\varphi(t\wedge\tau_{n,R}^\varphi)\le R$.
		Thus
		\[
		|Z_n^{\varphi,\theta}(t\wedge\tau_{n,R}^\varphi)|
		=\exp\bigl(|\theta|^\alpha
		A_n^\varphi(t\wedge\tau_{n,R}^\varphi)\bigr)
		\le e^{|\theta|^\alpha R}.
		\]
		The stopped real and imaginary parts are bounded local martingales,
		hence martingales. This proves the claim.
	\end{proof}
	
	\subsection{The martingale problem for the limit}
	\label{subsec:limit-martingale-problem}
	
	We now establish the exponential martingale identity for the limit
	state process $\widetilde u$ obtained in \eqref{eq:skor}. For each pair $(\widetilde u_n,\widetilde v_n)$ and for the limit pair
	$(\widetilde u,\widetilde v)$, choose the predictable representative from
	Lemma~\ref{lem:predictable-representative}.
	
	We apply the same construction to each original pair $(u_n,u_n)$:
	Lemma~\ref{lem:predictable-representative} applies because
	$\P((u_n,u_n)\in\calG)=1$ and
	$u_n\in\L^2(\calZ_T)$, $\P$-a.s., by
	Lemma~\ref{lem:full-integrability-probability} with $q'=2$.
	The passage from a c\`adl\`ag state process to a predictable density is also discussed in \cite{Hughes}*{Remark~5.2}. In Appendix~\ref{app:predictable-representatives}, we construct a
	predictable weak modification from the sine Fej\'er means of the
	left-limit state; for Fej\'er means, see
	\cite{Katznelson}*{Chapter~I, Sections~2 and~3}. It is predictable for
	the natural filtration of the state and agrees with the field
	Leb-a.e.\ in space--time, almost surely. Hence, all preceding integrals
	and convergence statements remain unchanged. We keep the notation
	$\widetilde u_n(t,y)$ and $\widetilde u(t,y)$ for these representatives,
	and $\widetilde u_n(t)$ and $\widetilde u(t)$ for the unchanged
	state processes.
	
	Let $(\overline\Fil_t)$ be the completion and right-continuous
	augmentation of
	\[
	\overline\Fil_t^0:=\sigma\bigl(\widetilde u(s):0\le s\le t\bigr).
	\]
	Define the limit processes by
	\begin{equation}\label{eq:limit-martingale-coordinates}
		\begin{aligned}
			\widetilde M^\varphi(t)
			:={}&\langle\widetilde u(t),\varphi\rangle-\langle u_0,\varphi\rangle
			-\frac12\int_0^t\langle\widetilde u(s),\varphi''\rangle\dd s
			-\int_{\calZ_t}b(\widetilde u(s,y))\varphi(y)\dd s\dd y,\\
			\widetilde A^\varphi(t)
			:={}&\int_{\calZ_t}|\sigma(\widetilde u(s,y))\varphi(y)|^\alpha
			\dd s\dd y.
		\end{aligned}
	\end{equation}
	These processes are adapted to $(\overline\Fil_t)$ by the choice of
	representative above.
	
	\begin{proposition}\label{prop:limit-martingale}
		We have $\widetilde u(0)=u_0$ and
		$\widetilde A^\varphi(T)<\infty$, $\overline\P$-a.s.
		For every $\varphi\in\calE$ and $\theta\in\R$, the process
		\begin{equation}\label{eq:limit-exponential-martingale}
			\widetilde Z^{\varphi,\theta}(t)
			:=\exp\bigl(i\theta\widetilde M^\varphi(t)
			+|\theta|^\alpha\widetilde A^\varphi(t)\bigr)
		\end{equation}
		is a complex-valued $(\overline\Fil_t)$-local martingale starting at $1$.
	\end{proposition}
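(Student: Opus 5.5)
The plan is to prove the martingale property by passing to the limit in the bounded stopped martingales of Lemma~\ref{lem:approximation-martingale}, transported to $(\overline\Omega,\overline\Fil,\overline\P)$ by equality in law. First, $\widetilde u(0)=u_0$ holds because $\widetilde u_n(0)\overset{d}{=}u_n(0)=u_0$ and $\mathrm J_1$ convergence gives convergence at time $0$. Next, $\widetilde A^\varphi(T)<\infty$ follows from \eqref{eq:limit-L2}, the linear growth \eqref{eq:linear-growth} and $\alpha<2$. Define $\widetilde M_n^\varphi,\widetilde A_n^\varphi,\widetilde Z_n^{\varphi,\theta}$ by \eqref{eq:martingale-coordinates}--\eqref{eq:exponential-martingale}, with $(\widetilde u_n,\widetilde u_n(\cdot,\cdot))$ in place of $(u_n,u_n)$. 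These are measurable functionals of the pair. Because the predictable representative in Lemma~\ref{lem:predictable-representative} is a measurable function of the state path, the joint law of the processes together with the state path coincides with that on the original basis.

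The martingale property for $\widetilde Z^{\varphi,\theta}$ is to be tested through the identity
\[
\overline\E\Bigl[\bigl(\widetilde Z^{\varphi,\theta}(t\wedge\widetilde\tau_R)-\widetilde Z^{\varphi,\theta}(s\wedge\widetilde\tau_R)\bigr)\,\Phi\bigl(\widetilde u(s_1),\dots,\widetilde u(s_m)\bigr)\Bigr]=0 .
\]
Here $\Phi$ is bounded and continuous on $\H_{-r}^m$. The times $s_1,\dots,s_m\le s<t$ are taken from the co-countable set of almost sure continuity times of $\widetilde u$; right-continuity and a monotone class argument then extend the identity to all $s<t$ and to $\overline\Fil_s$, including its augmentation. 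For the approximations, the analogous identity with $\widetilde\tau_{n,R}$ and $\widetilde u_n$ holds by Lemma~\ref{lem:approximation-martingale}. This uses the fact that $\sigma(u_n(r):r\le s)\subset\Fil_s$ and that the expectation is a law functional.

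To pass to the limit, use Lemma~\ref{lem:nonlinear-convergence}: $\widetilde A_n^\varphi\to\widetilde A^\varphi$ uniformly on $[0,T]$ in probability, and the $b$-drift integrals converge uniformly in probability. The pairings $\langle\widetilde u_n(t),\varphi\rangle$ converge at continuity times of $\widetilde u$. The time integral of $\langle\widetilde u_n,\varphi''\rangle$ converges uniformly by dominated convergence, since $\mathrm J_1$ convergence gives uniform bounds and pointwise convergence off a countable set. Hence $\widetilde M_n^\varphi(t)\to\widetilde M^\varphi(t)$ in probability at each continuity time $t$.

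The main obstacle is the stopping time. The hitting time $\tau_{n,R}$ of the continuous clock need not converge to the hitting time $\widetilde\tau_R$, and stopped values also need continuity of $\widetilde M^\varphi$ at the random time. I would first handle this by choosing levels $R$ at which $\widetilde A^\varphi$ does not stay flat at level $R$. This holds for all but countably many $R$, and for such $R$ uniform convergence of continuous nondecreasing clocks gives $\tau_{n,R}\to\widetilde\tau_R$ in probability. I would then still need convergence of $\widetilde M_n^\varphi(\tau_{n,R}\wedge t)$; to avoid evaluating at random times, I would replace hard stopping by the smooth cutoff $g(\widetilde A^\varphi(t))$, with $g$ supported in $[0,R)$. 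This requires redoing Lemma~\ref{lem:approximation-martingale}'s stochastic-integral representation for the product: $g(A)Z$ is a local martingale plus $\int Z\,g'(A)\,dA$. Alternatively, I would condition only on bounded times: since $|\widetilde Z_n|$ stays bounded before $\tau_{n,R}$, I can use $\mathbf 1_{\{\widetilde A_n(t)<R\}}$-type localization at deterministic continuity times, pass to the limit by bounded convergence, and finally let $R\to\infty$ along good levels using $\widetilde A^\varphi(T)<\infty$.
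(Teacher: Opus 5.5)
Your smooth-cutoff route is essentially the paper's proof: the paper passes to the limit in the bounded martingales $f(\widetilde A_n^\varphi)\widetilde E_n^{\varphi,\theta}-f(0)-\int_0^{\cdot}\widetilde E_n^{\varphi,\theta}(s-)\bigl[f'-|\theta|^\alpha f\bigr](\widetilde A_n^\varphi(s))\dd\widetilde A_n^\varphi(s)$, which is your $g(A)Z-\int Z\,g'(A)\dd A$ with $f(a)=g(a)e^{|\theta|^\alpha a}$. It tests this identity at times outside the countable set of fixed jump times and then removes the cutoff by stopping at the hitting times of $\widetilde A^\varphi$, so no choice of good levels is needed. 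Drop your last alternative: localizing with $\mathbf 1_{\{\widetilde A_n(t)<R\}}$ at deterministic times does not by itself give a martingale identity, because on the complementary event the stopped value still sits at the random time $\tau_{n,R}$, with modulus as large as $e^{|\theta|^\alpha R}$.
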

	
	\begin{proof}
		Evaluation at $0$ is continuous in the $\mathrm{J}_1$ topology. Thus
		\eqref{eq:skor} and $u_n(0)=u_0$ give $\widetilde u(0)=u_0$.
		The growth bound, $\alpha<2$ and \eqref{eq:limit-L2} imply
		$\widetilde A^\varphi(T)<\infty$ almost surely and integrability of
		the drift term in \eqref{eq:limit-martingale-coordinates}.
		
		Fix $\varphi\in\calE$ and $\theta\in\R$.
		Let $\widetilde M_n^\varphi$ and $\widetilde A_n^\varphi$ be the
		expressions in \eqref{eq:martingale-coordinates} with $u_n$ replaced
		by $\widetilde u_n$, retaining $b_n,\sigma_n$. Define
		\[
		\widetilde E_n^{\varphi,\theta}(t)
		:=e^{i\theta\widetilde M_n^\varphi(t)},\qquad
		\widetilde E^{\varphi,\theta}(t)
		:=e^{i\theta\widetilde M^\varphi(t)}.
		\]
		For each $n$, we use the natural filtration of $\widetilde u_n$, since
		equality in law transfers martingale identities only in this filtration.
		
		\smallskip
		\noindent\textit{1. Bounded martingales.}
		Let $f\in C_c^1([0,\infty);\R)$ and define
		\begin{equation}\label{eq:smooth-cutoff-martingale}
			\begin{aligned}
				\widetilde Y_n^{f,\varphi,\theta}(t)
				:={}&f(\widetilde A_n^\varphi(t))
				\widetilde E_n^{\varphi,\theta}(t)-f(0)\\
				&-\int_0^t\widetilde E_n^{\varphi,\theta}(s-)
				\bigl[f'(\widetilde A_n^\varphi(s))
				-|\theta|^\alpha f(\widetilde A_n^\varphi(s))\bigr]
				\dd\widetilde A_n^\varphi(s).
			\end{aligned}
		\end{equation}
		On the original probability space, let $Y_n^{f,\varphi,\theta}$ be
		this expression with $M_n^\varphi,A_n^\varphi$ in place of
		$\widetilde M_n^\varphi,\widetilde A_n^\varphi$.
		The product rule and \eqref{eq:ito-complex-martingale} show that
		\[
		\dd Y_n^{f,\varphi,\theta}(t)
		=f(A_n^\varphi(t))\dd\calM_n^\varphi(t),
		\qquad Y_n^{f,\varphi,\theta}(0)=0.
		\]
		Since $f(A_n^\varphi)$ is bounded and predictable,
		$Y_n^{f,\varphi,\theta}$ is a complex-valued local martingale.
		Both expressions have the deterministic bound
		\begin{equation}\label{eq:cutoff-martingale-bound}
			\sup_{t\le T}|\widetilde Y_n^{f,\varphi,\theta}(t)|
			\le 2\|f\|_{\infty}+
			\int_0^\infty|f'(a)-|\theta|^\alpha f(a)|\dd a
			=:\calC_{4,1}(f,\alpha,\theta).
		\end{equation}
		Indeed, $|\widetilde E_n^{\varphi,\theta}|=1$, and continuity and
		monotonicity of $\widetilde A_n^\varphi$ give
		\[
		\begin{aligned}
			&\int_0^t
			|f'(\widetilde A_n^\varphi(s))
			-|\theta|^\alpha f(\widetilde A_n^\varphi(s))|
			\dd\widetilde A_n^\varphi(s)\\
			&\qquad=\int_0^{\widetilde A_n^\varphi(t)}
			|f'(a)-|\theta|^\alpha f(a)|\dd a.
		\end{aligned}
		\]
		The same bound holds on the original space. Thus,
		$Y_n^{f,\varphi,\theta}$ is a bounded martingale for the natural
		filtration of $u_n$. The representative chosen above makes it a
		measurable function of the state path up to the current time. Equality
		in law in \eqref{eq:skor} therefore transfers the martingale identities:
		$\widetilde Y_n^{f,\varphi,\theta}$ is a martingale for the natural
		filtration of $\widetilde u_n$.
		The constant in \eqref{eq:cutoff-martingale-bound} depends only on
		$f,\alpha,\theta$, not on $n,T,\varphi$ or $\omega$. It is also listed in Appendix~\ref{app:constants}.
		
		\smallskip
		\noindent\textit{2. Convergence of these martingales.}
		The functions $\widetilde A_n^\varphi$ and $\widetilde A^\varphi$
		are absolutely continuous. Their densities are
		\[
		\begin{aligned}
			\dot{\widetilde A}_n^\varphi(s)
			&:=\int_D|\sigma_n(\widetilde u_n(s,y))\varphi(y)|^\alpha\dd y,\\
			\dot{\widetilde A}^\varphi(s)
			&:=\int_D|\sigma(\widetilde u(s,y))\varphi(y)|^\alpha\dd y.
		\end{aligned}
		\]
		Lemma~\ref{lem:nonlinear-convergence}, with $q'=2$, gives
		\begin{equation}\label{eq:clock-density-convergence}
			\begin{aligned}
				&\int_0^T|\dot{\widetilde A}_n^\varphi(s)
				-\dot{\widetilde A}^\varphi(s)|\dd s\longrightarrow0,\\
				&\int_{\calZ_T}
				|b_n(\widetilde u_n(s,y))-b(\widetilde u(s,y))|
				|\varphi(y)|\dd s\dd y\longrightarrow0
			\end{aligned}
		\end{equation}
		in probability. Choose a further deterministic subsequence along which
		both convergences hold almost surely. In particular,
		$\widetilde A_n^\varphi\to\widetilde A^\varphi$ uniformly on $[0,T]$.
		
		Let $\calJ$ be the set of deterministic times $t\in(0,T)$ at which
		$\overline\P(\Delta\widetilde u(t)\ne0)>0$. This set is at most
		countable for a c\`adl\`ag process. At each $t\in[0,T]\setminus \calJ$,
		$\mathrm{J}_1$ convergence gives $\widetilde u_n(t)\to\widetilde u(t)$ almost
		surely; at $0$ and $T$ this follows because the time changes fix both
		endpoints. Also,
		\[
		\int_0^T|\langle\widetilde u_n(s)-\widetilde u(s),\varphi''\rangle|
		\dd s\longrightarrow0
		\qquad\overline\P\text{-a.s.}
		\]
		To see this, the integrand converges at every continuity time of the
		limit path, hence for almost every $s$. A $\mathrm{J}_1$-convergent sequence
		satisfies $\sup_n\sup_{s\le T}\|\widetilde u_n(s)\|_{\H_{-r}}<\infty$
		pathwise, so dominated convergence applies. Together with
		\eqref{eq:clock-density-convergence}, this proves
		$\widetilde M_n^\varphi(t)\to\widetilde M^\varphi(t)$ and
		$\widetilde E_n^{\varphi,\theta}(t)\to
		\widetilde E^{\varphi,\theta}(t)$ almost surely at each $t\notin \calJ$.
		The same argument gives convergence for almost every $t$, almost surely.
		
		Define $\widetilde Y^{f,\varphi,\theta}$ by
		\eqref{eq:smooth-cutoff-martingale} with
		$\widetilde A^\varphi,\widetilde E^{\varphi,\theta}$ in place of
		$\widetilde A_n^\varphi,\widetilde E_n^{\varphi,\theta}$, and set
		$\psi^{f,\theta}(a):=f'(a)-|\theta|^\alpha f(a)$.
		For the integral term,
		\[
		\begin{aligned}
			&\sup_{t\le T}\Bigg|
			\int_0^t\widetilde E_n^{\varphi,\theta}(s-)
			\psi^{f,\theta}(\widetilde A_n^\varphi(s))
			\dot{\widetilde A}_n^\varphi(s)\dd s\\
			&\hspace{30mm}-\int_0^t\widetilde E^{\varphi,\theta}(s-)
			\psi^{f,\theta}(\widetilde A^\varphi(s))
			\dot{\widetilde A}^\varphi(s)\dd s\Bigg|\\
			&\quad\le\|\psi^{f,\theta}\|_{\infty} \int_0^T
			|\dot{\widetilde A}_n^\varphi(s)
			-\dot{\widetilde A}^\varphi(s)|\dd s\\
			&\qquad+\int_0^T\Big|
			\widetilde E_n^{\varphi,\theta}(s-)
			\psi^{f,\theta}(\widetilde A_n^\varphi(s))
			-\widetilde E^{\varphi,\theta}(s-)
			\psi^{f,\theta}(\widetilde A^\varphi(s))\Big|
			\dot{\widetilde A}^\varphi(s)\dd s
			\longrightarrow0
		\end{aligned}
		\]
		almost surely. The first term tends to zero by
		\eqref{eq:clock-density-convergence}. For the second, left and right
		values differ only at countably many times for each path; its integrand
		converges almost everywhere and is bounded by
		$2\|\psi^{f,\theta}\|_{\infty}\dot{\widetilde A}^\varphi$, which is
		integrable. Hence
		$\widetilde Y_n^{f,\varphi,\theta}(t)\to
		\widetilde Y^{f,\varphi,\theta}(t)$ almost surely at each $t\notin \calJ$.
		The process $\widetilde Y^{f,\varphi,\theta}$ is c\`adl\`ag and has
		the same bound \eqref{eq:cutoff-martingale-bound}.
		
		\smallskip
		\noindent\textit{3. Passing the martingale identity to the limit.}
		Take $s<t$ outside $\calJ$, times $s_1,\ldots,s_j\le s$ outside $\calJ$,
		indices $k_1,\ldots,k_j$, and a bounded continuous function
		$h:\R^j\to\R$. Set
		\[
		\Phi_n:=h\bigl(\langle\widetilde u_n(s_1),e_{k_1}\rangle,
		\ldots,\langle\widetilde u_n(s_j),e_{k_j}\rangle\bigr),
		\]
		and define $\Phi$ by replacing $\widetilde u_n$ by $\widetilde u$.
		The martingale property gives
		\[
		\overline{\E}\bigl[\Phi_n
		(\widetilde Y_n^{f,\varphi,\theta}(t)
		-\widetilde Y_n^{f,\varphi,\theta}(s))\bigr]=0.
		\]
		All the variables converge almost surely and are uniformly bounded,
		so
		\[
		\overline{\E}\bigl[\Phi
		(\widetilde Y^{f,\varphi,\theta}(t)
		-\widetilde Y^{f,\varphi,\theta}(s))\bigr]=0.
		\]
		For $s\notin\calJ$, the variables $\Phi$ generate $\overline\Fil_s^0$.
		Indeed, each state $\widetilde u(v)$ is determined by its coefficients
		$\langle\widetilde u(v),e_k\rangle$, $k\ge1$. The time $s$ itself is
		allowed among $s_1,\ldots,s_j$. For $v<s$, we can choose times
		$v_\ell\downarrow v$ outside the countable set $\calJ$, and
		$\widetilde u(v_\ell)\to\widetilde u(v)$ by right-continuity. A monotone class argument therefore gives the same
		identity for every bounded $\overline\Fil_s^0$-measurable variable.
		For arbitrary $s<t$, approach them from the right by times outside
		$\calJ$, keeping $t=T$ if needed. Right-continuity of
		$\widetilde Y^{f,\varphi,\theta}$ and bounded convergence give the
		martingale identity for all $s<t$.
		
		This property also holds for $(\overline\Fil_t)$.
		Indeed, completion does not change conditional expectations. If a
		bounded variable is measurable in the right-continuous augmentation
		at time $s$, it is measurable in every completed $\sigma$-field at times
		$s_\ell\downarrow s$. Apply the identity at $s_\ell<t$ and let
		$\ell\to\infty$. Thus, $\widetilde Y^{f,\varphi,\theta}$ is a bounded
		$(\overline\Fil_t)$-martingale.
		
		\smallskip
		\noindent\textit{4. Removing the smooth cutoff.}
		Restrict to positive integer levels $j$. Choose
		$f_j\in C_c^1([0,\infty);\R)$ equal to $1$ on $[0,j]$, and put
		\[
		\widetilde\tau_j^\varphi
		:=\inf\{t\in[0,T]:\widetilde A^\varphi(t)\ge j\}\wedge T,
		\qquad \inf\varnothing=\infty.
		\]
		These stopping times increase with $j$. By continuity and monotonicity,
		$\widetilde A^\varphi(s)\le j$ for $s\le\widetilde\tau_j^\varphi$.
		Thus $f_j(\widetilde A^\varphi)=1$ and
		$f_j'(\widetilde A^\varphi)=0$ up to and including this time, giving
		\[
		\begin{aligned}
			\widetilde Y^{f_j,\varphi,\theta}
			(t\wedge\widetilde\tau_j^\varphi)
			={}&\widetilde E^{\varphi,\theta}
			(t\wedge\widetilde\tau_j^\varphi)-1\\
			&+|\theta|^\alpha\int_0^{t\wedge\widetilde\tau_j^\varphi}
			\widetilde E^{\varphi,\theta}(s-)\dd\widetilde A^\varphi(s).
		\end{aligned}
		\]
		For integers $i<j$, both cutoff functions equal $1$, with derivative
		zero, throughout the range of $\widetilde A^\varphi$ up to
		$\widetilde\tau_i^\varphi$. The preceding integral identity therefore
		gives
		\[
		\widetilde Y^{f_i,\varphi,\theta}
		(t\wedge\widetilde\tau_i^\varphi)
		=\widetilde Y^{f_j,\varphi,\theta}
		(t\wedge\widetilde\tau_i^\varphi),
		\qquad\forall t\in[0,T],\quad i<j.
		\]
		Since the levels are countable, these consistent identities hold on one
		event of probability one. On this event, define the pasted process by
		\[
		\widetilde W^{\varphi,\theta}(t)
		:=\widetilde Y^{f_j,\varphi,\theta}(t)
		\quad\text{on }\{t\le\widetilde\tau_j^\varphi\}.
		\]
		The identity $\widetilde Y^{f_i,\varphi,\theta}(t\wedge\widetilde\tau_i^\varphi) =\widetilde Y^{f_j,\varphi,\theta}(t\wedge\widetilde\tau_i^\varphi)$ also holds at $t=\widetilde\tau_i^\varphi$. Therefore, the definition does not depend on the choice of $j$ with $t\le\widetilde\tau_j^\varphi$.
		
		Since $\widetilde A^\varphi(T)<\infty$ almost surely,
		$\widetilde\tau_j^\varphi=T$ for all sufficiently large $j$, pathwise;
		hence the definition covers $[0,T]$ almost surely.
		For each $t$, the pasted value is the eventual value of the adapted
		sequence $\widetilde Y^{f_j,\varphi,\theta}(t)$, so it is adapted.
		Its paths are c\`adl\`ag because this sequence of processes eventually
		agrees on the whole interval, for each such $\omega$.
		We set the process to zero on the remaining null event.
		
		Moreover,
		\[
		\widetilde W^{\varphi,\theta}(t\wedge\widetilde\tau_j^\varphi)
		=\widetilde Y^{f_j,\varphi,\theta}
		(t\wedge\widetilde\tau_j^\varphi).
		\]
		The right side is a martingale by optional stopping. Thus
		$\widetilde W^{\varphi,\theta}$ is a complex-valued local martingale.
		Since $\widetilde W^{\varphi,\theta}=\widetilde Y^{f_j,\varphi,\theta}$
		on $[0,\widetilde\tau_j^\varphi]$ and $\widetilde\tau_j^\varphi=T$ for
		large $j$, the formula for
		$\widetilde Y^{f_j,\varphi,\theta}(t\wedge\widetilde\tau_j^\varphi)$
		obtained above gives
		\begin{equation}\label{eq:limit-additive-identity}
			\widetilde W^{\varphi,\theta}(t)
			=\widetilde E^{\varphi,\theta}(t)-1
			+|\theta|^\alpha\int_0^t
			\widetilde E^{\varphi,\theta}(s-)\dd\widetilde A^\varphi(s).
		\end{equation}
		The same product rule as after \eqref{eq:ito-complex-martingale}
		therefore yields
		\[
		\widetilde Z^{\varphi,\theta}(t)
		=1+\int_0^t e^{|\theta|^\alpha\widetilde A^\varphi(s)}
		\dd\widetilde W^{\varphi,\theta}(s).
		\]
		Its integrand is predictable and locally bounded. This proves the
		claimed local martingale property.
	\end{proof}
	
	\begin{corollary}\label{cor:limit-coordinate-martingale}
		For every $\varphi\in\calE$, the process
		$\widetilde M^\varphi$ defined in
		\eqref{eq:limit-martingale-coordinates} is a real-valued
		c\`adl\`ag $(\overline\Fil_t)$-local martingale starting at zero.
	\end{corollary}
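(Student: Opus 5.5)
To show that $\widetilde M^\varphi$ is a local martingale, we localize with the clock stopping times $\widetilde\tau_j^\varphi$ from step~4 of the proof of Proposition~\ref{prop:limit-martingale} and extract $\widetilde M^\varphi$ from the characteristic functions by differentiating in $\theta$ at $\theta=0$.

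First, $\widetilde M^\varphi$ is c\`adl\`ag, adapted and starts at zero. Adaptedness is stated after \eqref{eq:limit-martingale-coordinates}, and $\widetilde M^\varphi(0)=0$ follows from $\widetilde u(0)=u_0$. The path regularity comes from the c\`adl\`ag state, the continuous time integral of $\langle\widetilde u(s),\varphi''\rangle$, and the absolutely continuous drift integral, whose integrand lies in $\L^1(\calZ_T)$ by \eqref{eq:limit-L2} and linear growth.

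Next, fix $j$ and set $\tau_j:=\widetilde\tau_j^\varphi$. By \eqref{eq:limit-additive-identity}, for each $\theta$ the stopped process
\[
\widetilde E^{\varphi,\theta}(t\wedge\tau_j)-1+|\theta|^\alpha\int_0^{t\wedge\tau_j}\widetilde E^{\varphi,\theta}(s-)\dd\widetilde A^\varphi(s)
\]
is a martingale bounded by $\calC_{4,1}(f_j,\alpha,\theta)$. The integral term has modulus at most $|\theta|^\alpha j$. For a bounded $\overline\Fil_s$-measurable $\Phi$ and $s<t$, the martingale identity gives
\[
\overline\E\bigl[\Phi\bigl(e^{i\theta \widetilde M^\varphi(t\wedge\tau_j)}-e^{i\theta\widetilde M^\varphi(s\wedge\tau_j)}\bigr)\bigr]
=O(|\theta|^\alpha).
\]
Here the $O(|\theta|^\alpha)$ bound is $2j|\theta|^\alpha\|\Phi\|_\infty$, uniformly in $\omega$.

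The difficulty is that $\widetilde M^\varphi(t\wedge\tau_j)$ need not be integrable, because it can have stable-type tails; we are not given it. So we localize a second time, with $\sigma_m:=\inf\{t:|\widetilde M^\varphi(t)|\ge m\}$. The stopped process $\widetilde M^\varphi(\cdot\wedge\sigma_m)$ is still unbounded, because of the overshoot at a jump. To handle this, we instead show that the increment $X:=\widetilde M^\varphi(t\wedge\tau_j)-\widetilde M^\varphi(s\wedge\tau_j)$ satisfies $\overline\E[X\Phi]=0$ whenever $X$ is integrable. The plan is to derive the conditional characteristic function bound
\[
\bigl|\overline\E\bigl[e^{i\theta X}-1\,\big|\,\overline\Fil_s\bigr]\bigr|\le C_j|\theta|^\alpha .
\]
This follows from the identity \eqref{eq:limit-exponential-martingale} applied conditionally: multiply by $\widetilde Z^{\varphi,\theta}$ stopped at $\tau_j$, which is bounded by $e^{|\theta|^\alpha j}$, and use $e^{-|\theta|^\alpha(\widetilde A^\varphi(t\wedge\tau_j)-\widetilde A^\varphi(s\wedge\tau_j))}\in[e^{-|\theta|^\alpha j},1]$. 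Since $\alpha>1$, the bound $|\theta|^\alpha=o(|\theta|)$ together with symmetry of the real part yields the standard conclusion: a conditional law whose characteristic function satisfies $1-\psi(\theta)=O(|\theta|^\alpha)$ has a finite first moment, and its mean is zero. This is the tail estimate $P(|X|>x)\lesssim x^{-\alpha}$ obtained from the truncation inequality, and it gives $\overline\E[|X|\mid\overline\Fil_s]<\infty$. Then
\[
\overline\E[X\Phi]=\lim_{\theta\to0}\theta^{-1}\,\mathrm{Im}\,\overline\E\bigl[\Phi(e^{i\theta X}-1)\bigr]=0
\]
by dominated convergence, using $|\sin(\theta X)|\le|\theta X|$.

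Integrability of the stopped process is handled in the same way, with $s=0$. The tail bound for $\widetilde M^\varphi(\tau_j\wedge t)$ comes from the same characteristic function estimate and holds uniformly in $t$. This gives uniform integrability of $\{\widetilde M^\varphi(t\wedge\tau_j)\}_{t\le T}$, since the $x^{-\alpha}$ tails are uniform in $t$. Hence $\widetilde M^\varphi(\cdot\wedge\tau_j)$ is a true martingale for each $j$. Finally, $\tau_j=T$ for large $j$ because $\widetilde A^\varphi(T)<\infty$, so $\widetilde M^\varphi$ is a local martingale.

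The main obstacle is the conditional tail step: turning $|\theta|^\alpha$-control of the conditional characteristic function into conditional integrability and a zero conditional mean. The plan is to carry this out with regular conditional distributions of $X$ given $\overline\Fil_s$, restricted to a countable dense set of $\theta$, together with the inequality $P(|X|>2/u)\le u^{-1}\int_{-u}^u(1-\mathrm{Re}\,\psi(\theta))\dd\theta$.
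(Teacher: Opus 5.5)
Your proof is correct and is essentially the paper's argument: localize at the clock times $\widetilde\tau_j^\varphi$, use the bounded stopped additive martingale to get $\bigl|1-\overline\E e^{i\theta\widetilde M^\varphi(t\wedge\widetilde\tau_j^\varphi)}\bigr|\le\min\{2,j|\theta|^\alpha\}$, deduce integrability from $\alpha>1$, and recover the martingale identity by taking imaginary parts, dividing by $\theta$ and letting $\theta\downarrow0$. The differences are minor. The paper obtains integrability from $|x|=(2/\pi)\int_0^\infty(1-\cos(\theta x))\theta^{-2}\dd\theta$ and Tonelli instead of the truncation inequality, and it never uses conditional laws. Once your $s=0$ bound gives integrability at each time, testing the stopped identity against bounded $\overline\Fil_s$-measurable $\Phi$ already suffices, so the regular-conditional-distribution step can be dropped.
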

	\begin{proof}
		Fix $\varphi\in\calE$. For $R>0$, stop at
		\[
		\widetilde\tau_R^\varphi
		:=\inf\{t\in[0,T]:\widetilde A^\varphi(t)\ge R\}\wedge T,
		\qquad R>0,\quad\inf\varnothing:=\infty.
		\]
		By the proof of Proposition~\ref{prop:limit-martingale}, the process
		$\widetilde W^{\varphi,\theta}$ in \eqref{eq:limit-additive-identity},
		stopped at $\widetilde\tau_R^\varphi$, is a bounded martingale. Taking
		expectations gives, for every $t\in[0,T]$ and $\theta\in\R$,
		\[
		\left|1-\overline\E
		e^{i\theta\widetilde M^\varphi(t\wedge\widetilde\tau_R^\varphi)}\right|
		\le\min\{2,R|\theta|^\alpha\}.
		\]
		The identity
		$|x|=(2/\pi)\int_0^\infty(1-\cos(\theta x))\theta^{-2}\dd\theta$
		and Tonelli's theorem imply
		$\overline\E|\widetilde M^\varphi(t\wedge
		\widetilde\tau_R^\varphi)|<\infty$, since $\alpha>1$.
		Take imaginary parts of the stopped additive identity, divide by
		$\theta>0$, and let $\theta\downarrow0$. The sine term converges
		in $\L^1(\overline\Omega)$ by
		$|\sin(\theta x)|/\theta\le|x|$, and the time-integral
		term is bounded by $R\theta^{\alpha-1}\to0$. Passing to the limit $\theta\downarrow0$ in the martingale identity at two deterministic times shows that
		$\widetilde M^\varphi(\cdot\wedge\widetilde\tau_R^\varphi)$ is a
		martingale. Since $\widetilde A^\varphi(T)<\infty$ almost surely,
		this proves the local martingale claim.
	\end{proof}

	\subsection{Weighted stochastic integration}
	\label{subsec:weighted-stochastic-integration}
	
	We follow a strategy similar to that of Mytnik
	\cite{Mytnik02}*{Section~5, pp.~190--194, Lemmas~5.4--5.6}:
	starting from the linear family $\widetilde M^\varphi$ in
	\eqref{eq:limit-martingale-coordinates}, we construct a stochastic
	integral for predictable space--time fields. We then recover the noise as in \cite{Mytnik02}*{Lemma~5.7, pp.~194--195}, which adapts \cite{KonnoShiga88}*{Lemma~2.4}: we divide by $\sigma(\widetilde u)$ where it is nonzero, and use an independent noise where it vanishes. In our setting, the integrands are taken in a weighted $\L^\alpha$
	space with stochastic weight $|\sigma(\widetilde u)|^\alpha$ (see
	\eqref{eq:mytnik-weighted-integrability} below), and the noise to be
	recovered is symmetric $\alpha$-stable.
	
	Our goal is to construct an extension of
	$\overline{\bbB}=(\overline\Omega,\overline\Fil,(\overline\Fil_t),\overline\P)$,
	satisfying the usual conditions and carrying a Poisson random measure
	$\calN$ with intensity $\m$. The associated symmetric stable
	noise $\widetilde L$, defined by \eqref{eq:Levy-Ito}, should satisfy,
	for every $\varphi\in\calE$,
	\begin{equation}\label{eq:mytnik-reconstruction-goal}
		\widetilde M^\varphi(t)
		=\int_{\calZ_t}\sigma(\widetilde u(s,y))\varphi(y)
		\widetilde L(\dd s,\dd y),
		\qquad\forall t\in[0,T],\quad\widetilde\P\text{-a.s.}
	\end{equation}
	The extension will preserve the joint law of the state and its
	predictable field. This identity gives the equation in
	Definition~\ref{def:weak-solution}(ii); the fixed-time identification
	in condition~(i) is a separate step.
	
	Following \cite{Mytnik02}*{Lemma~5.7}, first take a product extension
	$\widetilde{\bbB}=(\widetilde\Omega,\widetilde\Fil,(\widetilde\Fil_t),\widetilde\P)$
	carrying a copy $\widehat L$ of the original symmetric stable noise.
	Its values on deterministic space--time sets are independent of
	$\overline\Fil$. Let $(\widehat\Fil_t)$ be its usual natural
	filtration, and let $(\widetilde\Fil_t)$ be the usual augmentation
	of $(\overline\Fil_t\vee\widehat\Fil_t)$.
	The noise $\widehat L$ has the same law as $L$ and is a L\'evy basis
	with respect to $(\widehat\Fil_t)$, and also with respect to
	$(\widetilde\Fil_t)$ by independence.
	
	We keep the notation $\widetilde u$, $\widetilde M^\varphi$ and
	$\widetilde A^\varphi$ for these processes on the extended space.
	
	We show that enlarging the filtration preserves the local
	martingale properties of $\widetilde M^\varphi$,
	$\widetilde W^{\varphi,\theta}$ and
	$\widetilde Z^{\varphi,\theta}$.
	Fix $\varphi\in\calE$, $\theta\in\R$ and $j\ge1$,
	and let $X$ denote any one of these processes stopped at
	$\widetilde\tau_j^\varphi$. For $0\le s<t\le T$,
	$B\in\overline\Fil_s$, and $C\in\widehat\Fil_s$, independence gives
	\[
	\widetilde\E\bigl[(X(t)-X(s))\mathbf1_B\mathbf1_C\bigr]
	=
	\widetilde\E\bigl[(X(t)-X(s))\mathbf1_B\bigr]\,
	\widetilde\P(C)=0.
	\]
	A monotone class argument extends this identity to
	$\overline\Fil_s\vee\widehat\Fil_s$, and adding null sets does not
	change it.
	
	For $A\in\widetilde\Fil_s$, apply the identity at $s_\ell\downarrow s$,
	with $s<s_\ell<t$. Right-continuity and uniform integrability give
	$X(s_\ell)\to X(s)$ in $\L^1(\widetilde\Omega)$, so the identity
	also holds at $s$. The original stopping times therefore show that
	the processes remain local martingales in $(\widetilde\Fil_t)$,
	which we use below.
	
	We work with the predictable representative from
	Lemma~\ref{lem:predictable-representative}, so
	$\widetilde u\in\calP(\widetilde{\bbB})$ on the extended basis.
	We construct an $\L^0(\widetilde\Omega)$-valued random measure $\scrM$ from
	$\widetilde M^\varphi$, beginning with elementary integrands.
	
	\Needspace{9\baselineskip}
	Let $\calS_{\widetilde M}$ consist of the fields
	\begin{equation}\label{eq:mytnik-elementary-fields}
		F(s,y)=\sum_{j=1}^{m}
		\xi_j\mathbf1_{(a_j,b_j]}(s)\varphi_j(y),
	\end{equation}
	where $m\ge1$, $0\le a_j<b_j\le T$,
	$\varphi_j\in\calE$, and $\xi_j$ is a bounded real
	$\widetilde\Fil_{a_j}$-measurable random variable.
	For such a field, define
	\begin{equation}\label{eq:mytnik-elementary-integral}
		\calI_{\widetilde M}(F)(t)
		:=\sum_{j=1}^{m}\xi_j
		\bigl(\widetilde M^{\varphi_j}(t\wedge b_j)
		-\widetilde M^{\varphi_j}(t\wedge a_j)\bigr),
		\qquad t\in[0,T].
	\end{equation}
	The definition of $\widetilde M^\varphi$ is linear in $\varphi$.
	Consequently, refining the time intervals and expanding the
	$\varphi_j$ in the common basis $e_k$ shows that
	\eqref{eq:mytnik-elementary-integral} does not depend on the
	representation of the elementary field.
	
	By Corollary~\ref{cor:limit-coordinate-martingale},
	$\calI_{\widetilde M}(F)$ is a c\`adl\`ag local martingale for every $F\in\calS_{\widetilde M}$: each summand
	in \eqref{eq:mytnik-elementary-integral} is a bounded predictable
	multiple of an increment of $\widetilde M^{\varphi_j}$.
	
	Define the nonnegative random measure
	\[
	\mu_{\widetilde u}(\dd s,\dd y)
	:=|\sigma(\widetilde u(s,y))|^\alpha\dd s\dd y
	\quad\text{on }\calZ_T.
	\]
	By linear growth and \eqref{eq:limit-L2},
	$\mu_{\widetilde u}(\calZ_T)<\infty$,
	$\widetilde\P$-a.s. By definition,
	$\widetilde A^\varphi(t)=\int_{\calZ_t}|\varphi(y)|^\alpha
	\mu_{\widetilde u}(\dd s,\dd y)$.
	We extend the elementary integral to predictable fields $F$ satisfying
	\begin{equation}\label{eq:mytnik-weighted-integrability}
		\|F\|_{\L^\alpha(\calZ_T,\mu_{\widetilde u})}^\alpha
		=\int_{\calZ_T}|F(s,y)|^\alpha
		\mu_{\widetilde u}(\dd s,\dd y)<\infty,
		\qquad\widetilde\P\text{-a.s.}
	\end{equation}
	Here the $\L^\alpha$ space is understood for each $\omega$.
	In particular, every bounded predictable field satisfies this condition.
	
	\begin{lemma}[Weighted elementary approximation]
		\label{lem:weighted-density}
		Let $F\in\calP(\widetilde{\bbB})$ satisfy
		\eqref{eq:mytnik-weighted-integrability}. There exist
		$F_\ell\in\calS_{\widetilde M}$ such that
		\[
		\|F_\ell-F\|_{\L^\alpha(\calZ_T,\mu_{\widetilde u})}
		\longrightarrow0,
		\qquad \widetilde\P\text{-a.s.}
		\]
		In particular, convergence holds in probability.
	\end{lemma}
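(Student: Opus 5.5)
The plan is a monotone-class density argument carried out pathwise, with the random weight controlled by truncation. The key observation is that $\mu_{\widetilde u}$ has a density $w:=|\sigma(\widetilde u)|^\alpha$ with respect to Lebesgue measure, and $w$ is a predictable field with $\int_{\calZ_T}w\,\dd s\dd y<\infty$ almost surely. We therefore reduce to bounded fields and then to elementary fields.

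\emph{Step 1: reduction to bounded fields.} Put $G_m:=\pi_m(F)$, using the cutoff of Section~\ref{subsec:fixed-cutoff}. Then $G_m$ is predictable and bounded, and $|G_m-F|^\alpha\le|F|^\alpha$. Since $\int|F|^\alpha\dd\mu_{\widetilde u}<\infty$ almost surely, dominated convergence for each $\omega$ gives $\|G_m-F\|_{\L^\alpha(\mu_{\widetilde u})}\to0$ almost surely. It therefore suffices to approximate bounded predictable fields, say with $|G|\le c$.

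\emph{Step 2: bounded fields.} Let $\calH$ be the class of bounded $\calP(\widetilde\bbB)$-measurable $G$ for which some sequence in $\calS_{\widetilde M}$ converges to $G$ in $\widetilde\P$-probability in the metric $d(G,H):=\widetilde\E\bigl[\|G-H\|_{\L^\alpha(\mu_{\widetilde u})}\wedge1\bigr]$. Since $d$ is a metric, $\calH$ is a closed vector space. Fields of the form $\xi\mathbf1_{(a,b]}(s)\mathbf1_B(y)$, with $\xi$ bounded and $\widetilde\Fil_a$-measurable and $B$ an interval, generate $\calP_0\otimes\calB(D)$ together with the sets at time $0$. Time $0$ is $\mu_{\widetilde u}$-null because the weight is absolutely continuous in $s$. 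These generators form a multiplicative class.

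To show that such a field lies in $\calH$, approximate $\mathbf1_B$ by sine Fej\'er means or partial sums $\varphi_k\in\calE$ with $\|\varphi_k\|_\infty\le C$ and $\varphi_k\to\mathbf1_B$ Leb-a.e. Bounded convergence in $\L^\alpha(w\,\dd s\dd y)$, which is finite almost surely, gives $\xi\mathbf1_{(a,b]}\varphi_k\to\xi\mathbf1_{(a,b]}\mathbf1_B$ almost surely. Since $\calE$ need not contain uniformly bounded approximants of $\mathbf1_B$ other than Fej\'er means, we use the Fej\'er means as in Appendix~\ref{app:predictable-representatives}.

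For monotone closure, let $0\le G_k\uparrow G$ be bounded with $G_k\in\calH$. Dominated convergence for each $\omega$ gives $\|G_k-G\|\to0$ almost surely, hence in $d$, so $G\in\calH$ by closedness. The functional monotone class theorem then shows that $\calH$ contains all bounded predictable fields.

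\emph{Step 3: from probability to almost sure convergence.} Combining Steps 1 and 2 with a diagonal argument yields $F_\ell\in\calS_{\widetilde M}$ with $d(F_\ell,F)<2^{-\ell}$. By Borel--Cantelli, $\|F_\ell-F\|_{\L^\alpha(\mu_{\widetilde u})}\to0$ almost surely along the full sequence, because $\sum_\ell\widetilde\P(\|F_\ell-F\|\wedge1>\ell^{-1})\le\sum_\ell \ell\,2^{-\ell}<\infty$.

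The main obstacle is that the weight is random, so density cannot be obtained from a fixed Hilbert-space argument. The difficulty is handled by working with convergence in probability throughout, which is metrizable and closed under bounded monotone limits thanks to the almost sure finiteness of $\mu_{\widetilde u}(\calZ_T)$. A secondary point is that the time-$0$ sets and the generators with $\widetilde\Fil_a$-measurable coefficients must generate exactly $\calP(\widetilde\bbB)$, which is standard.
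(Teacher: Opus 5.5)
Your proof is correct, but it takes a different route from the paper's.

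The paper constructs the approximants explicitly. For $|F|\le M$ it sets $F_n=Q_nS_nF$. Here $S_n$ is the spatial sine Fej\'er mean, and $Q_n$ places on $(jh_n,(j+1)h_n]$ the time average of $F$ over the \emph{preceding} interval $((j-1)h_n,jh_n]$; this lag is what makes the coefficients $\widetilde\Fil_{jh_n}$-measurable. The paper then proves $F_n\to F$ in the fixed, unweighted space $\L^2(\widetilde\P\otimes\operatorname{Leb})$ and extracts a deterministic subsequence converging $\widetilde\P\otimes\operatorname{Leb}$-a.e. Absolute continuity turns this into $\mu_{\widetilde u}(\omega)$-a.e.\ convergence, and pathwise dominated convergence with $|F_n|\le M$ finishes the bounded case. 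Truncation and Borel--Cantelli, as in your Steps 1 and 3, handle general $F$.

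Your functional monotone class argument replaces the explicit formula. You check the generators $\xi\mathbf1_{(a,b]}\mathbf1_B$, using Fej\'er means of $\mathbf1_B$ bounded by $1$, and then closure under bounded monotone limits. This spares you from verifying that $Q_n$ preserves predictability and converges in $\L^2$. It also applies verbatim to any a.s.\ finite random weight that is absolutely continuous with respect to Lebesgue measure.

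The paper's route gives one formula for the approximants. This makes it immediate that a field predictable for a smaller filtration is approximated within that filtration, which the paper uses later when conditioning on $\overline\Fil$. Your argument yields the same fact if you run the monotone class argument in the smaller filtration.

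The obstacle you name is not really one. Because the approximants are uniformly bounded, unweighted a.e.\ convergence plus absolute continuity of $\mu_{\widetilde u}$ suffices, so a fixed Hilbert-space argument does work; this is exactly how the paper proceeds.

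One minor point should be made explicit. The monotone class theorem needs the constant $1$ in $\mathcal{H}$. This holds because $1$ and $\mathbf1_{(0,T]\times D}$ differ only on the $\mu_{\widetilde u}$-null slice $\{s=0\}$, so they are at $d$-distance zero.
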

	
	\begin{proof}
		Suppose first that $|F|\le M$. Set $h_n=T2^{-n}$ and define
		\[
		F_n(t,y)
		:=
		\sum_{j=1}^{2^n-1}\sum_{k=1}^{n}
		\xi_{j,k}^{(n)}
		\mathbf1_{(jh_n,(j+1)h_n]}(t)e_k(y),
		\]
		where
		\[
		\xi_{j,k}^{(n)}
		:=
		\left(1-\frac{k}{n+1}\right)\frac1{h_n}
		\int_{(j-1)h_n}^{jh_n}\int_D
		F(s,z)e_k(z)\,\dd z\,\dd s.
		\]
		Each $\xi_{j,k}^{(n)}$ is $\widetilde\Fil_{jh_n}$-measurable
		and satisfies $|\xi_{j,k}^{(n)}|\le\sqrt2 M$.
		Thus $F_n\in\calS_{\widetilde M}$.
		
		To prove convergence, we define the spatial Fej\'er operator as in
		Appendix~\ref{app:predictable-representatives},
		\[
		(S_nH)(t,y)
		:= \fraks_n (H(t, \cdot ) )(y) =
		\sum_{k=1}^n\left(1-\frac{k}{n+1}\right)
		\left(\int_D H(t,z)e_k(z)\,\dd z\right)e_k(y)
		\]
		and the time-averaging operator
		\[
		(Q_nH)(t,y)
		:=
		\sum_{j=1}^{2^n-1}
		\mathbf1_{(jh_n,(j+1)h_n]}(t)
		\frac1{h_n}\int_{(j-1)h_n}^{jh_n}H(s,y)\,\dd s.
		\]
		Then $F_n=Q_nS_nF$. Both operators are contractions in
		$\L^2(\widetilde\Omega\times\calZ_T,
		\widetilde\P\otimes\operatorname{Leb})$, by Parseval's identity
		and Jensen's inequality, respectively.
		
		For $f\in\L^2(D)$, put
		$c_k:=\int_D f(y)e_k(y)\,\dd y$. Parseval's identity gives
		\[
		\|S_nf-f\|_{\L^2(D)}^2
		=
		\sum_{k=1}^n\left(\frac{k}{n+1}\right)^2|c_k|^2
		+
		\sum_{k>n}|c_k|^2
		\longrightarrow0.
		\]
		The expression is bounded by $\|f\|_{\L^2(D)}^2$.
		Applying this to $F(\omega,t,\cdot)$ and using dominated convergence
		therefore gives
		\[
		S_nF\longrightarrow F
		\quad\text{in }
		\L^2(\widetilde\P\otimes\operatorname{Leb}).
		\]
		Likewise, the averages of a continuous function over the preceding
		time interval converge to that function in $\L^2(0,T)$.
		Applying this to finite sums of products $X(\omega)g(t)h(y)$,
		with $X\in\L^2(\widetilde\Omega)$, $g$ continuous, and
		$h\in\L^2(D)$, and then using density and the contraction
		bound gives
		\[
		Q_nF\longrightarrow F
		\quad\text{in }
		\L^2(\widetilde\P\otimes\operatorname{Leb}).
		\]
		Hence
		\[
		\|F_n-F\|_{\L^2(\widetilde\P\otimes\operatorname{Leb})}
		\le
		\|S_nF-F\|_{\L^2(\widetilde\P\otimes\operatorname{Leb})}
		+
		\|Q_nF-F\|_{\L^2(\widetilde\P\otimes\operatorname{Leb})}
		\longrightarrow0.
		\]
		
		The Fej\'er operator preserves the bound $M$: it is convolution
		of the odd periodic extension with a nonnegative kernel of integral
		one. Time averaging also preserves this bound, so $|F_n|\le M$.
		By the $\L^2$ convergence above, we can choose a deterministic
		subsequence such that
		\[
		F_{n_\ell}\longrightarrow F
		\quad \widetilde\P\otimes\operatorname{Leb}\text{-a.e.}
		\]
		By Fubini's theorem and the absolute continuity of
		$\mu_{\widetilde u}(\omega)$ with respect to Lebesgue measure,
		this convergence holds $\mu_{\widetilde u}(\omega)$-a.e.
		for almost every $\omega$. Since
		\[
		|F_{n_\ell}-F|^\alpha\le(2M)^\alpha,
		\qquad
		\mu_{\widetilde u}(\calZ_T)<\infty,
		\quad \widetilde\P\text{-a.s.},
		\]
		dominated convergence applied for each such $\omega$ gives
		\[
		\int_{\calZ_T}|F_{n_\ell}-F|^\alpha
		\,\mu_{\widetilde u}(\dd s,\dd y)
		\longrightarrow0,
		\qquad \widetilde\P\text{-a.s.}
		\]
		
		For general $F$, set $F^{(m)}:=(-m)\vee(F\wedge m)$.
		By \eqref{eq:mytnik-weighted-integrability} and dominated convergence,
		\[
		\int_{\calZ_T}|F^{(m)}-F|^\alpha
		\,\mu_{\widetilde u}(\dd s,\dd y)
		\longrightarrow0,
		\qquad \widetilde\P\text{-a.s.}
		\]
		For each $m$, the bounded case allows us to choose
		$G_m\in\calS_{\widetilde M}$ such that
		\[
		\widetilde\P\left(
		\int_{\calZ_T}|G_m-F^{(m)}|^\alpha
		\,\mu_{\widetilde u}(\dd s,\dd y)>2^{-m}
		\right)<2^{-m}.
		\]
		By Borel--Cantelli, the weighted errors between $G_m$ and $F^{(m)}$
		tend to zero almost surely. The inequality
		\[
		|G_m-F|^\alpha
		\le 2^{\alpha-1}
		\bigl(|G_m-F^{(m)}|^\alpha+|F^{(m)}-F|^\alpha\bigr)
		\]
		then proves the result with $F_\ell=G_\ell$.
	\end{proof}

	Fix a predictable field $F$ satisfying
	\eqref{eq:mytnik-weighted-integrability}. Choose elementary fields
	$F_\ell\in\calS_{\widetilde M}$ as in
	Lemma~\ref{lem:weighted-density}, so that
	\begin{equation}\label{eq:mytnik-weighted-approximation}
		\|F_\ell-F\|_{\L^\alpha(\calZ_T,\mu_{\widetilde u})}
		\longrightarrow0
		\quad\text{in }\widetilde\P\text{-probability}.
	\end{equation}
	Take a deterministic subsequence and relabel it so that
	\eqref{eq:mytnik-weighted-approximation} holds
	$\widetilde\P$-a.s. Then, for almost every $\omega$, $(F_\ell)$
	is Cauchy in $\L^\alpha(\calZ_T,\mu_{\widetilde u})$.
	We show next that the elementary integrals are Cauchy in probability.
	
	For any predictable $H$ satisfying
	\eqref{eq:mytnik-weighted-integrability}, write
	\[
	\widetilde A^H(t):=\int_{\calZ_t}|H(s,y)|^\alpha
	\mu_{\widetilde u}(\dd s,\dd y).
	\]
	This process is continuous, adapted and nondecreasing.
	To prove the Cauchy property, we need to show that the elementary
	integral tends to zero in probability when its integrand tends to
	zero in $\L^\alpha(\calZ_T,\mu_{\widetilde u})$.
	The following exponential identity relates the integral to
	$\widetilde A^H$ and yields the estimate
	\eqref{eq:elementary-integral-continuity} below.
	For an elementary $H$ and $\theta\in\R$, the process
	\begin{equation}\label{eq:elementary-integral-exponential}
		t\longmapsto e^{i\theta\calI_{\widetilde M}(H)(t)}
		+|\theta|^\alpha\int_0^t
		e^{i\theta\calI_{\widetilde M}(H)(s-)}
		\dd\widetilde A^H(s)
	\end{equation}
	is a local martingale starting at $1$.
	First suppose the $\xi_j$ in the elementary representation of $H$
	are finite-valued. Refining the time intervals gives a partition on
	each interval of which $H$ is a finite-valued choice of a test
	function in $\calE$, measurable at the left endpoint.
	
	Fix one of these intervals $(a,b]$ and let $G_\varphi\in\widetilde\Fil_a$
	be the event where $H=\varphi$ on $(a,b]$. On $G_\varphi$ we have
	$\calI_{\widetilde M}(H)(t)-\calI_{\widetilde M}(H)(a)
	=\widetilde M^\varphi(t)-\widetilde M^\varphi(a)$ for $a\le t\le b$.
	Hence the identity for $H$ on $(a,b]$ follows from
	\eqref{eq:limit-additive-identity} after multiplying by
	$\mathbf1_{G_\varphi}\exp\bigl(i\theta[\calI_{\widetilde M}(H)(a)
	-\widetilde M^\varphi(a)]\bigr)$, and since this factor is bounded and
	$\widetilde\Fil_a$-measurable, the martingale property is preserved.
	
	Summing over the finitely many choices and intervals proves
	\eqref{eq:elementary-integral-exponential} in this case.
	For general bounded $\xi_j$, choose finite-valued
	$\xi_{j,\ell}$ with $|\xi_{j,\ell}-\xi_j|\le\ell^{-1}$, retaining
	the same measurability, and replace $\xi_j$ by $\xi_{j,\ell}$ to
	obtain $H_\ell$. By \eqref{eq:mytnik-elementary-integral},
	\[
	\sup_{t\le T}|\calI_{\widetilde M}(H_\ell)(t)
	-\calI_{\widetilde M}(H)(t)|
	\le\frac2\ell\sum_{j=1}^{m}
	\sup_{t\le T}|\widetilde M^{\varphi_j}(t)|
	\longrightarrow0\quad\widetilde\P\text{-a.s.}
	\]
	Also $H_\ell\to H$ uniformly with a common deterministic bound.
	Thus, $\int_{\calZ_T}\bigl||H_\ell|^\alpha-|H|^\alpha\bigr|
	\dd \mu_{\widetilde u}\to0$ almost surely.
	Stop where $\mu_{\widetilde u}(\calZ_t)$ reaches a level $\kappa \in\N$.
	On this interval, the expressions in
	\eqref{eq:elementary-integral-exponential} for $H_\ell$ have a
	common deterministic bound and converge uniformly almost surely.
	For $s<t$ and bounded $\widetilde\Fil_s$-measurable $Z$, let $X_\ell$
	be the process in \eqref{eq:elementary-integral-exponential} for
	$H_\ell$, stopped at this time. The identities
	$\widetilde\E[Z(X_\ell(t)-X_\ell(s))]=0$ pass to the limit by bounded
	convergence. This proves the claim for $H$.
	
	For $R>0$, put
	\[
	\widetilde\tau_R^H
	:=\inf\{t\in[0,T]:\widetilde A^H(t)\ge R\}\wedge T,
	\qquad\inf\varnothing:=\infty.
	\]
	Since $\widetilde A^H$ is continuous and nondecreasing,
	$\widetilde A^H(t)\le R$ for all $t\le\widetilde\tau_R^H$. As in
	Corollary~\ref{cor:limit-coordinate-martingale}, the stopped
	exponential identity shows that
	$\calI_{\widetilde M}(H)(\cdot\wedge\widetilde\tau_R^H)$
	is a martingale and
	\[
	\widetilde\E\left|
	\calI_{\widetilde M}(H)(T\wedge\widetilde\tau_R^H)\right|
	\le\frac2\pi\int_0^\infty
	\frac{\min\{2,R\theta^\alpha\}}{\theta^2}\dd\theta
	=\calC_{5,1}(\alpha)R^{1/\alpha},
	\]
	where $\calC_{5,1}(\alpha)
	=2^{2-1/\alpha}\alpha/[\pi(\alpha-1)]$ is given in
	Appendix~\ref{app:constants}. Doob's maximal inequality gives
	\begin{equation}\label{eq:elementary-integral-continuity}
		\widetilde\P\!\left(
		\sup_{t\le T}|\calI_{\widetilde M}(H)(t)|>\varepsilon\right)
		\le\widetilde\P\bigl(\widetilde A^H(T)\ge R\bigr)
		+\frac{\calC_{5,1}(\alpha)R^{1/\alpha}}{\varepsilon},
		\qquad\varepsilon,R>0.
	\end{equation}
	Here we used that $\widetilde\tau_R^H=T$ on
	$\{\widetilde A^H(T)<R\}$.
	Consequently, if elementary $H_\ell$ satisfy
	$\widetilde A^{H_\ell}(T)\to0$ almost surely, then
	\[
	\sup_{t\le T}|\calI_{\widetilde M}(H_\ell)(t)|
	\longrightarrow0\quad\text{in probability}.
	\]
	Indeed, for fixed $R>0$, the first term on the right-hand side of
	\eqref{eq:elementary-integral-continuity} tends to zero as
	$\ell\to\infty$, and the second term tends to zero as $R\downarrow0$.
	This argument only uses that $\widetilde A^{H_\ell}(T)\to0$ in
	probability.
	
	Apply this estimate to $H=F_\ell-F_j$. Since
	\[
	\widetilde A^{F_\ell-F_j}(T)
	\le2^{\alpha-1}\bigl(
	\widetilde A^{F_\ell-F}(T)+\widetilde A^{F_j-F}(T)\bigr)
	\longrightarrow0\quad\widetilde\P\text{-a.s.},
	\qquad\ell,j\to\infty,
	\]
	linearity gives, for every $\varepsilon>0$,
	\[
	\lim_{\ell,j\to\infty}\widetilde\P\!\left(
	\sup_{t\le T}\left|
	\calI_{\widetilde M}(F_\ell)(t)
	-\calI_{\widetilde M}(F_j)(t)\right|>\varepsilon\right)=0.
	\]
	Thus, the elementary integrals are Cauchy uniformly in time in
	probability. A subsequence converges uniformly in time almost
	surely, and its limit has c\`adl\`ag paths. The Cauchy property then
	implies convergence of the whole sequence uniformly in time in
	probability. We define
	\begin{equation}\label{eq:mytnik-integral-extension}
		\calI_{\widetilde M}(F)(t)
		\overset{\widetilde\P}{:=}
		\lim_{\ell\to\infty}\calI_{\widetilde M}(F_\ell)(t),
		\qquad t\in[0,T],
	\end{equation}
	and use this adapted c\`adl\`ag version.
	If another elementary sequence approximates $F$, its difference
	from $F_\ell$ tends to zero in
	$\L^\alpha(\calZ_T,\mu_{\widetilde u})$ in probability. The same estimate shows that the integral limits are
	indistinguishable. Hence, the definition is independent of the
	approximation, is linear, and agrees with
	\eqref{eq:mytnik-elementary-integral} for elementary fields.
	Choosing elementary approximations also extends the continuity
	property: whenever $\widetilde A^{H_\ell-H}(T)\to0$ in probability,
	$\calI_{\widetilde M}(H_\ell)\to\calI_{\widetilde M}(H)$
	uniformly in time in probability.
	
	We can now define
	\[
	\scrM(A):=\calI_{\widetilde M}(\mathbf1_A)(T),
	\qquad A\in\calP(\widetilde{\bbB}).
	\]
	Indicators satisfy \eqref{eq:mytnik-weighted-integrability} because
	$\mu_{\widetilde u}(\calZ_T)<\infty$ almost surely.
	For disjoint predictable
	sets $A_j$ and $A=\bigcup_{j\ge1}A_j$, dominated convergence gives
	\[
	\widetilde A^{\mathbf1_A-\sum_{j=1}^{\ell}\mathbf1_{A_j}}(T)
	\longrightarrow0,\qquad\widetilde\P\text{-a.s.}
	\]
	Linearity and continuity of the extension imply
	$\scrM(A)=\sum_{j\ge1}\scrM(A_j)$ in probability.
	Also $\scrM(\varnothing)=0$. Restricting in time commutes with the limit:
	\[
	\scrM\bigl(A\cap(\widetilde\Omega\times\calZ_t)\bigr)
	=\calI_{\widetilde M}(\mathbf1_A)(t)
	\quad\text{is }\widetilde\Fil_t\text{-measurable}.
	\]
	For $B\in\widetilde\Fil_t$, \eqref{eq:mytnik-elementary-integral}
	similarly gives, after passage to the limit,
	\[
	\scrM\bigl(A\cap(B\times(t,T]\times D)\bigr)
	=\mathbf1_B\,
	\scrM\bigl(A\cap(\widetilde\Omega\times(t,T]\times D)\bigr).
	\]
	These are the required properties of an $\L^0(\widetilde\Omega)$-valued random measure
	\cite{ChongKlueppelberg}*{Definition~2.1(1), with $p=0$}.
	We henceforth write
	\[
	\int_{\calZ_t}F(s,y)\,\scrM(\dd s,\dd y)
	:=\calI_{\widetilde M}(F)(t).
	\]
	
	For an integrand predictable with respect to the original
	filtration, the construction above can use elementary
	approximations from that filtration. Uniqueness of the limit shows
	that its integral is unchanged by the enlargement and is
	$\overline\Fil$-measurable. We shall use this observation when
	conditioning on $\overline\Fil$ below.

	\subsection{Reconstruction of the stable noise}
	\label{subsec:stable-noise-reconstruction}
	
	For $A\in\calP(\widetilde{\bbB})$, define
	\begin{equation}\label{eq:reconstructed-noise-definition}
		\begin{aligned}
			\widetilde L(A)
			:= {}&\int_{\calZ_T}
			\frac{\mathbf1_A(s,y)\mathbf1_{\{\sigma(\widetilde u(s,y))\ne0\}}}
			{\sigma(\widetilde u(s,y))}\,\scrM(\dd s,\dd y)\\
			&+\int_{\calZ_T}\mathbf1_A(s,y)
			\mathbf1_{\{\sigma(\widetilde u(s,y))=0\}}
			\,\widehat L(\dd s,\dd y).
		\end{aligned}
	\end{equation}
	The dependence of $\mathbf1_A$ on $\omega$ is suppressed, and
	the quotient is zero when $\sigma(\widetilde u)=0$.
	Both integrals are well defined, since
	\[
	\int_{\calZ_T}
	\left|\frac{\mathbf1_A\mathbf1_{\{\sigma(\widetilde u)\ne0\}}}
	{\sigma(\widetilde u)}\right|^\alpha
	\dd \mu_{\widetilde u}
	=\int_{\calZ_T}\mathbf1_A
	\mathbf1_{\{\sigma(\widetilde u)\ne0\}}\dd s\dd y
	\le T,
	\]
	and the integrand against $\widehat L$ satisfies the same bound
	with the complementary indicator.
	For integrals against $\widehat L$, the same exponential-martingale
	and Doob argument gives \eqref{eq:elementary-integral-continuity} with
	$\widetilde A^H(T)$ replaced by $\int_{\calZ_T}|H(s,y)|^\alpha\dd s\dd y$.
	
	This defines an $\L^0(\widetilde\Omega)$-valued random measure in the sense of
	\cite{ChongKlueppelberg}*{Definition~2.1(1), with $p=0$} on the enlarged basis.
	To check countable additivity, let the predictable sets $A_j$ be
	disjoint and set $A=\bigcup_j A_j$. The indicators of
	$A\setminus\bigcup_{j\le m}A_j$ decrease to zero. The two
	corresponding integrability bounds above tend to zero by dominated
	convergence. Continuity of the two stochastic integrals therefore
	gives $\widetilde L(A)=\sum_j\widetilde L(A_j)$ in probability.
	Also $\widetilde L(\varnothing)=0$. The other two properties in that definition also hold for
	$\widetilde L$, because they hold for both integrals: $\widetilde L(A)$
	is $\widetilde\Fil_t$-measurable when $A\subset\widetilde\Omega\times
	[0,t]\times D$, and the indicator of an event in $\widetilde\Fil_t$ can
	be taken out of $\widetilde L(A)$ when $A$ lies after time $t$.
	
	For a deterministic $B\in\calB(\calZ_T)$, write
	$\widetilde L(B):=\widetilde L(\widetilde\Omega\times B)$.
	For predictable $f$ with
	$\int_{\calZ_T}|f|^\alpha\dd s\dd y<\infty$ almost surely, set
	\[
	\widetilde L_t(f):=\int_{\calZ_t}f(s,y)\,
	\widetilde L(\dd s,\dd y).
	\]
	These integrals are well defined and satisfy
	\begin{equation}\label{eq:reconstructed-noise-substitution}
		\begin{aligned}
			\widetilde L_t(f)
			={}&\int_{\calZ_t}
			\frac{f(s,y)\mathbf1_{\{\sigma(\widetilde u(s,y))\ne0\}}}
			{\sigma(\widetilde u(s,y))}\,\scrM(\dd s,\dd y)\\
			&+\int_{\calZ_t}f(s,y)
			\mathbf1_{\{\sigma(\widetilde u(s,y))=0\}}
			\,\widehat L(\dd s,\dd y).
		\end{aligned}
	\end{equation}
	For simple predictable $f$, this follows by linearity from
	\eqref{eq:reconstructed-noise-definition}. For general $f$, choose
	simple predictable $f_\ell$ such that $\int_{\calZ_T}|f_\ell-f|^\alpha
	\dd s\dd y\to0$ in probability. Both integrals in
	\eqref{eq:reconstructed-noise-substitution} are controlled by this
	quantity; for the $\scrM$-integral, the weight
	$|\sigma(\widetilde u)|^\alpha$ cancels the factor
	$1/\sigma(\widetilde u)$. So the estimate
	\eqref{eq:elementary-integral-continuity}, and its analogue for
	$\widehat L$, show that both integrals with $f_\ell$ converge to those
	with $f$, uniformly in time in probability. The same bound holds for
	every simple integrand dominated by $|f_\ell-f|$, so $f_\ell\to f$ in
	the Daniell mean of $\widetilde L$. This proves both integrability and
	\eqref{eq:reconstructed-noise-substitution}. We use the resulting
	adapted c\`adl\`ag version.
	
	We next identify the law of $\widetilde L$ on deterministic
	space--time sets and verify its compatibility with the filtration.
	The representation \eqref{eq:mytnik-reconstruction-goal} will be
	addressed afterwards.
	
	\smallskip
	\noindent\textit{The exponential identity for general integrands.}
	We first extend \eqref{eq:elementary-integral-exponential} to every
	predictable $H$ satisfying \eqref{eq:mytnik-weighted-integrability};
	this is the counterpart of \cite{Mytnik02}*{Lemma~5.5 and equation~(5.6)}.
	Choose elementary $H_\ell$ converging to $H$ in the
	$\L^\alpha(\calZ_T,\mu_{\widetilde u})$ norm in probability. By the construction above, after
	a deterministic subsequence, both
	\[
	\sup_{t\le T}|\calI_{\widetilde M}(H_\ell)(t)
	-\calI_{\widetilde M}(H)(t)|\longrightarrow0,
	\qquad
	\int_{\calZ_T}\bigl||H_\ell|^\alpha-|H|^\alpha\bigr|
	\dd \mu_{\widetilde u}\longrightarrow0
	\]
	hold almost surely. The second convergence follows from
	$\bigl||x|^\alpha-|y|^\alpha\bigr| \le\alpha|x-y|(|x|+|y|)^{\alpha-1}$ and H\"older's inequality. In particular, $\widetilde A^{H_\ell}\to\widetilde A^H$ uniformly on
	$[0,T]$, and the measures $\dd\widetilde A^{H_\ell}$ converge to $\dd\widetilde A^H$ in total variation.
	
	We use the same smooth cutoff as in
	Proposition~\ref{prop:limit-martingale}. For
	$\chi\in C_c^1([0,\infty);\R)$, the product rule and
	\eqref{eq:elementary-integral-exponential} show that
	\[
	\begin{aligned}
		&\chi(\widetilde A^{H_\ell}(t))
		e^{i\theta\calI_{\widetilde M}(H_\ell)(t)}-\chi(0)\\
		&\quad-\int_0^t e^{i\theta\calI_{\widetilde M}(H_\ell)(s-)}
		\bigl[\chi'(\widetilde A^{H_\ell}(s))
		-|\theta|^\alpha\chi(\widetilde A^{H_\ell}(s))\bigr]
		\dd\widetilde A^{H_\ell}(s)
	\end{aligned}
	\]
	is a martingale bounded by
	$\calC_{4,1}(\chi,\alpha,\theta)$ from Appendix~\ref{app:constants},
	independently of $\ell$ and $H_\ell$.
	The two convergences above give uniform convergence of this
	expression to the expression with $H$ in place of $H_\ell$.
	For the integral term, use uniform convergence of the exponential
	factors and cutoff functions, together with convergence in total
	variation of the measures $\dd\widetilde A^{H_\ell}$. Bounded convergence against every bounded $\widetilde\Fil_s$-measurable variable, for $s\le t$, therefore
	passes the martingale identity to the limit. Choose $\chi=1$ on $[0,R]$ and stop at
	$\widetilde\tau_R^H$. The limiting expression is then the stopped
	version of \eqref{eq:elementary-integral-exponential}, minus $1$.
	Letting $R$ increase proves that identity for $H$.
	The product rule now shows that the process
	\begin{equation}\label{eq:extended-exponential-martingale}
		t\longmapsto
		\exp\!\left(i\theta\int_{\calZ_t}H\dd\scrM
		+|\theta|^\alpha\widetilde A^H(t)\right)
	\end{equation}
	is a local martingale starting at $1$.
	If $\widetilde A^H(T)\le B$ for a deterministic finite $B$, its
	modulus is at most $e^{|\theta|^\alpha B}$, so it is a true martingale.
	
	\smallskip
	\noindent\textit{The stable characteristic function.}
	For deterministic $f\in \L^\alpha(\calZ_T)$, put
	\[
	H_f(s,y):=\frac{f(s,y)\mathbf1_{\{\sigma(\widetilde u(s,y))\ne0\}}}
	{\sigma(\widetilde u(s,y))},
	\]
	with the same convention at zero, and define
	\[
	a_f(t):=\int_{\calZ_t}|f(s,y)|^\alpha
	\mathbf1_{\{\sigma(\widetilde u(s,y))\ne0\}}\dd s\dd y,
	\qquad
	b_f(t):=\int_{\calZ_t}|f(s,y)|^\alpha\dd s\dd y.
	\]
	Then $\widetilde A^{H_f}(t)=a_f(t)\le b_f(T)$, and $b_f$ is
	deterministic. By \eqref{eq:extended-exponential-martingale},
	\begin{equation}\label{eq:inverse-exponential-mean}
		\widetilde\E\exp\!\left(
		i\theta\int_{\calZ_t}H_f\dd\scrM
		+|\theta|^\alpha a_f(t)\right)=1.
	\end{equation}
	Conditioning on $\overline\Fil$ fixes the integrand against
	$\widehat L$. Independence of $\widehat L$ gives
	\begin{equation}\label{eq:completion-conditional-characteristic}
		\widetilde\E\!\left[
		\exp\!\left(i\theta\int_{\calZ_t}
		f\mathbf1_{\{\sigma(\widetilde u)=0\}}\dd\widehat L\right)
		\,\middle|\,\overline\Fil\right]
		=e^{-|\theta|^\alpha[b_f(t)-a_f(t)]}.
	\end{equation}
	To justify conditioning on the whole original $\sigma$-field, work
	with $(\overline\Fil\vee\widehat\Fil_v)_{v\le T}$: independence
	ensures that $\widehat L$ is still a stable noise in this filtration.
	The integral against $\widehat L$ in
	\eqref{eq:reconstructed-noise-substitution} is unchanged,
	since its elementary approximations and probability limits agree in
	both filtrations. 
	
	Let $\calK_t$ be the class of functions
	$g:\widetilde\Omega\times\calZ_t\to\R$ that are
	$\overline\Fil\otimes\calB(\calZ_t)$-measurable and satisfy
	$\widetilde\E\int_{\calZ_t}|g(s,y)|^\alpha\dd s\dd y<\infty$.
	Every $g\in\calK_t$ is predictable with respect to
	$(\overline\Fil\vee\widehat\Fil_v)_{v\le T}$, so its integral against
	$\widehat L$ is defined. We show that, for every $g\in\calK_t$,
	\begin{equation}\label{eq:conditional-chf-general}
		\widetilde\E\!\left[\exp\!\left(i\theta\int_{\calZ_t}g\dd\widehat L\right)
		\,\middle|\,\overline\Fil\right]
		=\exp\!\left(-|\theta|^\alpha\int_{\calZ_t}|g(s,y)|^\alpha\dd s\dd y\right).
	\end{equation}
	First let $g=\sum_{i=1}^m\mathbf1_{B_i}h_i$, where
	$B_1,\dots,B_m\in\overline\Fil$ are disjoint and each $h_i$ is a finite
	linear combination of indicators of Borel subsets of $\calZ_t$.
	On $B_i$, the integral of $g$ equals the integral of the deterministic
	function $h_i$, and $\widehat L$ is independent of $\overline\Fil$; so
	\eqref{eq:conditional-chf-general} holds for this $g$. A general
	$g\in\calK_t$ is the limit in
	$\L^\alpha(\widetilde\Omega\times\calZ_t,
	\widetilde\P\otimes\operatorname{Leb})$ of a sequence $(g_m)$ of
	functions of this form. Then $\int_{\calZ_t}g_m\dd\widehat L\to
	\int_{\calZ_t}g\dd\widehat L$ in probability, so the exponentials
	on the left of \eqref{eq:conditional-chf-general}, being bounded,
	converge in $\L^1(\widetilde\Omega)$. Also
	$\int_{\calZ_t}|g_m|^\alpha\to\int_{\calZ_t}|g|^\alpha$ in
	probability, so the right-hand sides converge in
	$\L^1(\widetilde\Omega)$ as well. This proves
	\eqref{eq:conditional-chf-general} for every $g\in\calK_t$.
	
	Finally, $g=f\mathbf1_{\{\sigma(\widetilde u)=0\}}$ belongs to
	$\calK_t$, since it is
	$\overline\Fil\otimes\calB(\calZ_t)$-measurable and
	$|g|\le|f|$ with $f\in\L^\alpha(\calZ_t)$. For this $g$,
	\eqref{eq:conditional-chf-general} is
	\eqref{eq:completion-conditional-characteristic}.

	The two integrals in \eqref{eq:reconstructed-noise-substitution} need
	not be independent. We only use that the first one is
	$\overline\Fil$-measurable. Hence
	\eqref{eq:completion-conditional-characteristic} and
	\eqref{eq:inverse-exponential-mean} yield
	\begin{equation}\label{eq:reconstructed-stable-characteristic}
		\begin{aligned}
			\widetilde\E e^{i\theta\widetilde L_t(f)}
			&=\widetilde\E\left[
			e^{i\theta\int_{\calZ_t}H_f\dd\scrM}
			e^{-|\theta|^\alpha[b_f(t)-a_f(t)]}\right]\\
			&=e^{-|\theta|^\alpha b_f(t)}
			\widetilde\E\exp\!\left(
			i\theta\int_{\calZ_t}H_f\dd\scrM
			+|\theta|^\alpha a_f(t)\right)\\
			&=\exp\!\left(-|\theta|^\alpha
			\int_{\calZ_t}|f(s,y)|^\alpha\dd s\dd y\right).
		\end{aligned}
	\end{equation}
	In particular, for every $A\in\calB(\calZ_T)$,
	\[
	\widetilde\E e^{i\theta\widetilde L(A)}
	=\exp\bigl(-|\theta|^\alpha\operatorname{Leb}(A)\bigr).
	\]
	For disjoint $A_1,\ldots,A_m$, take
	$f=\sum_{j=1}^m\theta_j\mathbf1_{A_j}$ and $\theta=1$ in
	\eqref{eq:reconstructed-stable-characteristic}. Linearity gives
	\[
	\widetilde\E\exp\!\left(i\sum_{j=1}^m\theta_j\widetilde L(A_j)\right)
	=\prod_{j=1}^m
	\exp\bigl(-|\theta_j|^\alpha\operatorname{Leb}(A_j)\bigr).
	\]
	Thus, the values on disjoint sets are independent.
	Together with countable additivity already proved above, this
	identifies the restriction to deterministic sets as a symmetric
	$\alpha$-stable independently scattered random measure with Lebesgue
	control measure. For each disjoint sequence, its independent partial
	sums converge in probability, hence also almost surely, to the value
	on the union.

	\noindent\textit{Compatibility with the filtration.}
	We also need the future values of $\widetilde L$ to be independent of
	the past of the extended filtration. Let $0\le s<t\le T$. Conditionally
	on $\overline\Fil\vee\widehat\Fil_s$, the integrand of the
	$\widehat L$-integral over $(s,t]\times D$ in
	\eqref{eq:reconstructed-noise-substitution} is fixed, and the
	increments of $\widehat L$ after time $s$ are independent of
	$\overline\Fil\vee\widehat\Fil_s$. As in
	\eqref{eq:completion-conditional-characteristic}, the conditional
	characteristic function of this integral is therefore
	\[
	\exp\!\left(-|\theta|^\alpha
	[b_f(t)-b_f(s)-a_f(t)+a_f(s)]\right).
	\]
	For the $\scrM$-integral, the bounded martingale
	\eqref{eq:extended-exponential-martingale} with $H=H_f$ is adapted to
	$(\overline\Fil_t)$ and is a martingale with respect to
	$(\widetilde\Fil_t)$, hence also with respect to the smaller
	filtration $(\overline\Fil_t\vee\widehat\Fil_t)$. Since its modulus is
	at least $1$, we may divide its martingale identity between $s$ and
	$t$ by its value at $s$, which gives
	\[
	\widetilde\E\!\left[
	\exp\!\left(i\theta\int_{(s,t]\times D}H_f\dd\scrM
	+|\theta|^\alpha[a_f(t)-a_f(s)]\right)
	\,\middle|\,\overline\Fil_s\vee\widehat\Fil_s\right]=1.
	\]
	Since the $\scrM$-integral is $\overline\Fil$-measurable,
	combining these two conditional identities gives
	\begin{equation}\label{eq:reconstructed-noise-past-independence}
		\widetilde\E\!\left[
		e^{i\theta(\widetilde L_t(f)-\widetilde L_s(f))}
		\,\middle|\,\widetilde\Fil_s\right]
		=\exp\!\left(-|\theta|^\alpha
		\int_{(s,t]\times D}|f(v,y)|^\alpha\dd v\dd y\right),
	\end{equation}
	first with $\overline\Fil_s\vee\widehat\Fil_s$ in place of
	$\widetilde\Fil_s$. Completion does not change this identity. To pass
	to the right-continuous augmentation, we apply the identity at times
	$s_j\downarrow s$ with $s_j<t$ and use bounded convergence, the
	c\`adl\`ag paths of $\widetilde L_\cdot(f)$ and the continuity of
	$b_f$. Taking $f$ to be a finite linear combination of indicators of
	deterministic sets in $(s,T]\times D$ then shows that the values of
	$\widetilde L$ on such sets are independent of $\widetilde\Fil_s$.
	Adaptedness of $\widetilde L$ follows directly from
	\eqref{eq:reconstructed-noise-definition}.

	\emph{L\'evy basis verification.}
	We will show that $\widetilde L$ is a symmetric $\alpha$-stable
	L\'evy basis relative to $(\widetilde\Fil_t)$ and identify its
	canonical decomposition using Chong--Kl\"uppelberg
	\cite{ChongKlueppelberg}*{Definitions~2.1 and~3.1 and Theorem~3.2}.
	
	First, the construction following
	\eqref{eq:reconstructed-noise-definition} proves that $\widetilde L$
	vanishes at the empty set, is countably additive in probability on $\calP(\widetilde{\bbB})$, is
	$\widetilde\Fil_t$-measurable on predictable sets supported before $t$,
	and commutes with multiplication by events in $\widetilde\Fil_t$ on
	future predictable sets. Hence, $\widetilde L$ is an $\L^0(\widetilde\Omega)$-valued random measure on $\calP(\widetilde{\bbB})$
	in the sense of \cite{ChongKlueppelberg}*{Definition~2.1(1), with $p=0$}. We apply \cite{ChongKlueppelberg}*{Theorem~3.2} to the random measure
	on $[0,T]\times D$ by extending it by zero outside $[0,T]$
	and keeping the filtration fixed at $\widetilde\Fil_0$
	before $0$ and at $\widetilde\Fil_T$ after $T$. This extension by zero is used only to apply
	\cite{ChongKlueppelberg}*{Theorem~3.2}; it is not a stable noise after
	time $T$. At the end of this subsection we extend $\widetilde L$ in a
	different way, by using the jumps of the auxiliary noise $\widehat L$
	after time $T$, so that it becomes a symmetric $\alpha$-stable noise on
	$(0,\infty)$, as required in \eqref{eq:Levy-Ito}.
	
	For deterministic sets, \eqref{eq:reconstructed-stable-characteristic} gives infinitely divisible stable laws and factorization on disjoint sets, while
	\eqref{eq:reconstructed-noise-past-independence} gives independence of
	future increments from $\widetilde\Fil_t$. The Lebesgue control measure
	has no fixed-time atoms. These are precisely the remaining L\'evy-basis
	requirements (5)--(7) in Chong \cite{ChongVolterra}*{Section~2}.
	
	Next, let $U_1,U_2\in\calB(D)$ be disjoint. By
	\eqref{eq:reconstructed-stable-characteristic}, the processes
	$t\mapsto\widetilde L((0,t]\times U_1)$ and
	$t\mapsto\widetilde L((0,t]\times U_2)$ are independent. Since neither
	of them has fixed jump times, they almost surely never jump at the
	same time. Hence $\widetilde L$ has different times of discontinuity
	in the sense of \cite{ChongKlueppelberg}*{Definition~3.1}, and
	\cite{ChongKlueppelberg}*{Theorem~3.2} gives its predictable
	characteristics $(\frakB,\frakC,\nu^{\widetilde L})$ and its jump measure $\calN$.
	
	Here $\calN$ is
	the random measure on $(0,T]\times D\times(\R\setminus\{0\})$ that puts
	a unit mass at $(s,y,z)$ whenever $\widetilde L$ has a jump of size $z$
	at the space--time point $(s,y)$. Thus $\calN((s,t]\times U\times V)$ is
	the number of jumps of $\widetilde L$ in $(s,t]\times U$ with size in
	$V$.

	It remains to identify these characteristics in the enlarged
	filtration. Let $U\in\calB(D)$. By
	\eqref{eq:reconstructed-noise-past-independence} with
	$f=\mathbf1_{U}$, the process $t\mapsto\widetilde L((0,t]\times U)$
	is a symmetric $\alpha$-stable L\'evy process with respect to
	$(\widetilde\Fil_t)$, with L\'evy measure
	$\operatorname{Leb}(U)\nu_\alpha$. In particular, it has no drift and
	no Gaussian part, so $\frakB=\frakC=0$. Moreover, if $F\in\widetilde\Fil_s$ and
	$V\subset\R\setminus\{0\}$ is bounded away from zero, then
	\[
	\widetilde\E\left[\mathbf1_F
	\calN((s,t]\times U\times V)\right]
	=\widetilde\P(F)(t-s)\operatorname{Leb}(U)\nu_\alpha(V).
	\]
	Since the compensator of $\calN$ is determined by its values on such
	predictable rectangles, we obtain
	\[
	\nu^{\widetilde L}(\dd s,\dd y,\dd z)
	=\dd s\,\dd y\,\nu_\alpha(\dd z)=\m(\dd s,\dd y,\dd z).
	\]
	
	The jump measure $\calN$ has at most one point at each time.
	Its compensator with respect to $(\widetilde\Fil_t)$ is the
	deterministic measure $\m$, which gives zero mass to every time
	slice. The characterization of Poisson random measures
	\cite{JS}*{Theorem~II.4.8} therefore shows that $\calN$ is an
	$(\widetilde\Fil_t)$-Poisson random measure with intensity $\m$.
	
	We conclude that $\widetilde L$ is a L\'evy basis with respect to
	$(\widetilde\Fil_t)$ and satisfies properties (1)--(7) of
	\cite{ChongVolterra}*{Section~2}. In particular, for each
	$A\in\calP(\widetilde{\bbB})$,
	$\widetilde L$ has the L\'evy--It\^o decomposition
	\cite{ChongVolterra}*{(2.2)--(2.3)}
	\begin{equation}\label{eq:reconstructed-Levy-Ito}
		\begin{aligned}
			\widetilde L(A)
			={}&\int_{\calZ_T\times\{0<|z|\le1\}}
			\mathbf1_A(s,y)z\,(\calN-\m)(\dd s,\dd y,\dd z)\\
			&+\int_{\calZ_T\times\{|z|>1\}}
			\mathbf1_A(s,y)z\,\calN(\dd s,\dd y,\dd z),
			\qquad\widetilde\P\text{-a.s.},
		\end{aligned}
	\end{equation}
	where $\calN$ is the $(\widetilde\Fil_t)$-Poisson random measure
	with intensity $\m$ obtained above.
	
	So far, $\widetilde L$ is defined only on $[0,T]$, while
	\eqref{eq:Levy-Ito} uses the time interval $(0,\infty)$. To extend it,
	let $\calN^\star$ be the Poisson random measure of the auxiliary
	noise $\widehat L$, and define, for Borel sets
	$A\subset(0,\infty)\times D\times(\R\setminus\{0\})$,
	\[
	\calN^\infty(A)
	:=\calN\bigl(A\cap((0,T]\times D\times(\R\setminus\{0\}))\bigr)
	+\calN^\star\bigl(A\cap((T,\infty)\times D\times(\R\setminus\{0\}))\bigr).
	\]
	For $t>T$, let $\widetilde\Fil_t$ be generated by $\widetilde\Fil_T$
	and the restriction of $\calN^\star$ to
	$(T,t]\times D\times(\R\setminus\{0\})$, completed as before. Since
	the jumps of $\calN^\star$ after time $T$ are independent of
	$\widetilde\Fil_T$, the measure $\calN^\infty$ is a Poisson random
	measure with intensity $\m$ with respect to this filtration. The noise
	defined from $\calN^\infty$ by \eqref{eq:Levy-Ito} agrees with
	$\widetilde L$ on $[0,T]$. We keep the notation $\calN$ and
	$\widetilde L$ for these extensions; this does not change the state
	process or any identity on $[0,T]$.
	
	
	Fix $\varphi\in\calE$. By linear growth and
	\eqref{eq:limit-L2}, the field $f=\sigma(\widetilde u)\varphi$
	satisfies $\int_{\calZ_T}|f|^\alpha\dd s\dd y<\infty$,
	$\widetilde\P$-a.s., so \eqref{eq:reconstructed-noise-substitution}
	applies to it. Since $f\mathbf1_{\{\sigma(\widetilde u)=0\}}=0$, we
	obtain, with probability one, for all $t\in[0,T]$,
	\[
	\int_{\calZ_t}\sigma(\widetilde u)\varphi\dd\widetilde L
	=\int_{\calZ_t}\varphi
	\mathbf1_{\{\sigma(\widetilde u)\ne0\}}\dd\scrM
	=\int_{\calZ_t}\varphi\dd\scrM
	=\widetilde M^\varphi(t).
	\]
	The second equality holds because
	$H=\varphi\mathbf1_{\{\sigma(\widetilde u)=0\}}$ satisfies
	$\widetilde A^H(T)=0$, since
	$\mu_{\widetilde u}(\{\sigma(\widetilde u)=0\})=0$, and
	\eqref{eq:elementary-integral-continuity} then shows that its integral
	is zero. The last equality is the definition of the elementary
	integral. All these processes are c\`adl\`ag, so the equalities hold
	for all $t$ on one event of probability one. By the definition
	\eqref{eq:limit-martingale-coordinates} of $\widetilde M^\varphi$, this
	is the weak equation in Definition~\ref{def:weak-solution}(ii).
	
	It remains to verify condition~(i). Fix $t\in(0,T]$. Since the
	intensity $\m$ gives no mass to $\{t\}\times D\times(\R\setminus\{0\})$,
	the Poisson random measure $\calN$ has almost surely no points at
	time $t$. Thus $\widetilde M^{e_k}$ does not jump at $t$, almost
	surely, for every $k$. The other terms in the weak equation are
	continuous in time, so
	$\langle\widetilde u(t)-\widetilde u(t-),e_k\rangle=0$ for all $k$,
	almost surely. This gives $\widetilde u(t-)=\widetilde u(t)$ in
	$\H_{-r}$, $\widetilde\P$-a.s., where the left limit is taken in
	$\H_{-r}$. In particular, $\widetilde u$ is almost surely continuous at
	$t$, and \eqref{eq:skor} gives $\widetilde u_n(t)\to\widetilde u(t)$ in
	$\H_{-r}$, $\widetilde\P$-a.s.
	
	We now show that $\widetilde u(t)\in\L^2(D)$ almost surely. For
	$R>0$, let
	$B_R:=\{h\in\L^2(D):\|h\|_{\L^2(D)}^2\le R\}$, viewed as
	a subset of $\H_{-r}$. This set is closed in $\H_{-r}$: if $h_j\in B_R$
	and $h_j\to h$ in $\H_{-r}$, then a subsequence converges weakly in
	$\L^2(D)$ to some $g$ with $\|g\|_{\L^2(D)}^2\le R$, and
	testing against each $e_k$ shows that $g=h$. On $\{\tauK>T\}$ the
	states $u_n(t)$ and $u_n^K(t)$ agree, so Lemma~\ref{lem1} with $q=2$
	and Markov's inequality give
	\[
	\P\bigl(u_n(t)\notin B_R\bigr)
	\le\P(\tauK\le T)+\frac{\Cbound{2}{K}}{R},
	\qquad n\ge1.
	\]
	By equality in law, the same bound holds for $\widetilde u_n(t)$.
	Since $\widetilde u_n(t)\to\widetilde u(t)$ almost surely and the
	complement of $B_R$ is open, the Portmanteau theorem
	\cite{EK}*{Chapter~3, Theorem~3.1(e)} gives the same
	bound for $\widetilde\P(\widetilde u(t)\notin B_R)$. Letting first
	$R\to\infty$ with $K$ fixed, and then $K\to\infty$, we obtain
	$\widetilde u(t)\in\L^2(D)$, $\widetilde\P$-a.s.
	
	Finally, the field $\widetilde u(t,\cdot)$ is the representative of
	Lemma~\ref{lem:predictable-representative}, which is built from the
	Fej\'er means of $\widetilde u(t-)$. When
	$\widetilde u(t-)\in\L^2(D)$, these means converge almost
	everywhere to its density. Since
	$\widetilde u(t-)=\widetilde u(t)\in\L^2(D)$ almost surely, we
	obtain
	\[
	\widetilde\P\!\left(
	\langle\widetilde u(t),\varphi\rangle
	=\int_D\widetilde u(t,y)\varphi(y)\dd y
	\quad\text{for all }\varphi\in\calE
	\right)=1.
	\]
	This is condition~(i); the exceptional event may depend on $t$.
	Condition~(iii) follows from \eqref{eq:limit-L2}, linear growth and
	$\alpha<2$. Together with $\widetilde u(0)=u_0$ and the c\`adl\`ag
	paths of the state, this completes the proof of
	Theorem~\ref{thm:weak-existence} on the extended stochastic basis
	$\widetilde{\bbB}$.

	\section{Mild representation}\label{sec:mild-representation}
	\label{subsec:proof-mild-existence}
	
	This section proves Theorem~\ref{thm:mild-existence}. We first obtain
	the pointwise kernel estimates needed to define the mild map. We then
	derive a regularized identity, remove the regularization, and identify
	the resulting predictable field with the distribution-valued state.
	
	\subsection{Kernel integrability and the mild map}
	\label{subsec:mild-kernel-map}
	
	We first show that both integrals in the mild formula are well defined
	at each fixed $(t,x)$. We use the field $\widetilde u\in\calP(\widetilde{\bbB})$
	and the noise $\widetilde L$ constructed in the preceding subsection.
	In integrals containing $\rho_{t-s}$, the kernel integrands are set
	to zero at $s=t$.
	
	For each fixed $(t,x)\in(0,T]\times D$, define the functional
	$\calV(\cdot)(t,x):\calL\to[0,\infty]$ by
	\begin{equation}\label{eq:mild-kernel-integrability}
		\calV(v)(t,x)
		:=\int_{\calZ_t}\rho_{t-s}(x,y)|b(v(s,y))|\dd s\dd y
		+\int_{\calZ_t}\rho_{t-s}(x,y)^\alpha
		|\sigma(v(s,y))|^\alpha\dd s\dd y,
	\end{equation}
	where $v\in\calL=\L^q((0,T)\times D)$.
	The value does not depend on the chosen representative of $v$.
	We define $\calV_n(\cdot)(t,x):\calL\to[0,\infty]$
	by replacing $b,\sigma$ with $b_n,\sigma_n$.
	
	\begin{lemma}\label{lem:mild-functional-measurability}
		For each fixed deterministic $(t,x)\in(0,T]\times D$ and every $n\ge1$,
		the functionals
		\[
		\calV(\cdot)(t,x),\ \calV_n(\cdot)(t,x):
		(\calL,\calB(\calL))
		\longrightarrow([0,\infty],\calB([0,\infty]))
		\]
		are measurable. In fact, both functionals are
		lower semicontinuous in the norm topology of $\calL$.
	\end{lemma}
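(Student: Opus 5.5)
The plan is to prove lower semicontinuity directly; measurability then follows, since lower semicontinuous maps into $[0,\infty]$ are Borel. Fix $(t,x)\in(0,T]\times D$ and write the integrand weights as $w_1(s,y):=\rho_{t-s}(x,y)\mathbf1_{\{s<t\}}$ and $w_\alpha(s,y):=\rho_{t-s}(x,y)^\alpha\mathbf1_{\{s<t\}}$. Both are nonnegative and measurable on $\calZ_T$, by positivity of the Dirichlet heat kernel; we do not need their integrability. With this notation,
\[
\calV(v)(t,x)=\int_{\calZ_T}\bigl(w_1|b(v)|+w_\alpha|\sigma(v)|^\alpha\bigr)\dd s\dd y .
\]
The integrand is a nonnegative function $G(s,y,v(s,y))$, where $G(s,y,\xi)$ is continuous in $\xi$ because $b$ and $\sigma$ are continuous. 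The same holds for $b_n,\sigma_n$. Changing $v$ on a null set does not change the integral, so the functional is well defined on $\calL$.

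For lower semicontinuity, let $v_j\to v$ in $\calL=\L^q((0,T)\times D)$, and set $\ell:=\liminf_j\calV(v_j)(t,x)$. First pass to a subsequence $(v_{j'})$ along which $\calV(v_{j'})(t,x)\to\ell$. From that subsequence, extract a further subsequence converging Lebesgue-a.e. on $\calZ_T$, which is the standard consequence of $\L^q$ convergence. Along this last subsequence, continuity of $\xi\mapsto G(s,y,\xi)$ gives $G(s,y,v_{j''}(s,y))\to G(s,y,v(s,y))$ for a.e. $(s,y)$. Fatou's lemma, which needs only nonnegativity and is valid with values in $[0,\infty]$, then yields $\calV(v)(t,x)\le\liminf_{j''}\calV(v_{j''})(t,x)=\ell$. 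This is exactly lower semicontinuity in the norm topology. The identical argument applies to $\calV_n$.

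Finally, sets of the form $\{v:\calV(v)(t,x)>c\}$ are open, so they generate $\calB([0,\infty])$-preimages lying in $\calB(\calL)$, and this gives measurability. The only real subtlety is the subsequence bookkeeping. The a.e.-convergent subsequence must be chosen after first realizing the $\liminf$, so that Fatou's lemma bounds $\ell$ itself. No integrability of the kernels is needed at this step, since the values may be $+\infty$. I expect no genuine obstacle beyond checking that the heat-kernel weights are measurable and nonnegative, which follows from \eqref{eq:heat-kernel} and positivity of $\rho$.
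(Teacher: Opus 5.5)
Your proposal is correct and follows essentially the same route as the paper's proof. The paper also takes a subsequence realizing the $\liminf$, then a further Lebesgue-a.e.\ convergent subsequence, applies continuity of the coefficients and Fatou's lemma, and deduces Borel measurability from the openness of the superlevel sets.
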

	\begin{proof}
		Fix $n$ and $(t,x)$, and let $v_j\to v$ in $\calL$. We show that
		\[
		\calV_n(v)(t,x)
		\le\liminf_{j\to\infty}\calV_n(v_j)(t,x).
		\]
		Choose a subsequence along which $\calV_n(v_j)(t,x)$ converges
		to the lower limit on the right, which may be infinite. Since
		convergence in $\L^q(\calZ_T)$ implies almost-everywhere
		convergence along a subsequence, we may pass to a further
		subsequence, still denoted $(v_j)$, such that $v_j\to v$ Leb-a.e.\ on
		$\calZ_T$. Because $b_n$ and $\sigma_n$ are continuous, the
		nonnegative integrands
		$\rho_{t-s}(x,y)|b_n(v_j(s,y))|$ and
		$\rho_{t-s}(x,y)^\alpha|\sigma_n(v_j(s,y))|^\alpha$
		converge almost everywhere to the corresponding integrands for $v$.
		Fatou's lemma then gives the inequality. The same argument applies
		to $\calV$, since $b$ and $\sigma$ are also continuous.
		
		A lower semicontinuous function is Borel measurable, because the sets
		$\{v:\calV_n(v)(t,x)>a\}$, $a\in\R$, are open. This proves the
		lemma.
	\end{proof}
	
	\begin{lemma}\label{lem:mild-kernel-integrability}
		For every fixed $(t,x)\in(0,T]\times D$,
		$\calV(\widetilde u)(t,x)<\infty$, $\widetilde\P$-a.s. More
		precisely, for every $R>0$ and every integer $K\ge1$,
		\[
		\widetilde\P\bigl(\calV(\widetilde u)(t,x)>R\bigr)
		\le\P(\tauK\le T)+\frac{\calC_{6,1}(K)}{R},
		\]
		where $\calC_{6,1}(K)$ is the constant in
		\eqref{eq:mild-convolution-constant}. It depends only on
		$\alpha,T,\Cgrow,\|u_0\|_{\L^\infty(D)}$ and $m_2(K)$, and not
		on $n$, $t$, $x$ or $R$.
	\end{lemma}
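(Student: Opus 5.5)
The plan is to transfer a first-moment bound from the original approximations $u_n$ to the limit field $\widetilde u$, using equality in law and lower semicontinuity (Lemma~\ref{lem:mild-functional-measurability}), exactly as in the derivation of \eqref{eq:limit-tail}. Fix $(t,x)\in(0,T]\times D$ and an integer $K\ge1$.

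\emph{Step 1: a bound for the truncated solutions.} On $\{\tauK>T\}$ the fields $u_n$ and $u_n^K$ agree Leb-a.e.\ on $\calZ_T$, so $\calV_n(u_n)(t,x)=\calV_n(u_n^K)(t,x)$ there. By Tonelli, \eqref{eq:approx-growth} and Lemma~\ref{lem1} with $q=2$,
\[
\E\,\calV_n(u_n^K)(t,x)\le 2\Cgrow\bigl(1+\Cbound{2}{K}^{1/2}\bigr)\int_0^t\!\!\int_D\rho_{t-s}(x,y)\dd y\dd s
+(2\Cgrow)^\alpha 2^{\alpha-1}\bigl(1+\Cbound{2}{K}^{\alpha/2}\bigr)\int_0^t\!\!\int_D\rho_{t-s}(x,y)^\alpha\dd y\dd s .
\]
Here I use $\E|u|^\alpha\le(\E|u|^2)^{\alpha/2}$. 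The first kernel integral is at most $T$ by the sub-Markov property. For the second, I use $\sup_x\int_D\rho_s(x,y)^\alpha\dd y\le h_\alpha s^{-(\alpha-1)/2}$, which follows from the same Gaussian domination $\rho_s(x,y)\le(2\pi s)^{-1/2}e^{-(x-y)^2/(2s)}$ that gave $h_q$ in Lemma~\ref{lem1}. Its time integral is finite because $(\alpha-1)/2<1$. This defines $\calC_{6,1}(K)$, which is independent of $n,t,x$. Markov's inequality then gives
\[
\P\bigl(\calV_n(u_n)(t,x)>R\bigr)\le\P(\tauK\le T)+\calC_{6,1}(K)/R .
\]

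\emph{Step 2: transfer and passage to the limit.} By Lemma~\ref{lem:mild-functional-measurability}, $\calV_n(\cdot)(t,x)$ is Borel on $\calL$. Since $\widetilde v_n\overset d=u_n$ in $\calL$ by \eqref{eq:skor}, the same tail bound holds for $\calV_n(\widetilde u_n)(t,x)$. The representative used does not change the value.

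Next I show $\calV(\widetilde u)(t,x)\le\liminf_n\calV_n(\widetilde u_n)(t,x)$ almost surely. We have $\widetilde u_n\to\widetilde u$ Leb-a.e.\ almost surely, and $b_n\to b$, $\sigma_n\to\sigma$ locally uniformly with $b,\sigma$ continuous. Hence the integrands converge a.e., and Fatou's lemma applies. Consequently, on $\{\calV(\widetilde u)(t,x)>R\}$ we have $\calV_n(\widetilde u_n)(t,x)>R$ for all large $n$. Fatou's lemma applied to the indicators then gives
\[
\widetilde\P\bigl(\calV(\widetilde u)(t,x)>R\bigr)\le\liminf_n\overline\P\bigl(\calV_n(\widetilde u_n)(t,x)>R\bigr),
\]
and the right side is bounded by the Step~1 bound. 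These events live on $\overline\Omega$, so the product extension does not change their probabilities.

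\emph{Step 3: finiteness.} Letting $R\to\infty$ and then $K\to\infty$, and using $\P(\tauK\le T)\to0$, yields $\calV(\widetilde u)(t,x)<\infty$ almost surely.

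The main obstacle is Step~1. It needs the $\alpha$-kernel bound uniformly in $x$ and a moment estimate that does not depend on $\Lambda_n$, so the constant must depend only on $m_2(K)$ and the listed parameters. The rest is the same lower-semicontinuity and equality-in-law pattern used earlier in the paper.
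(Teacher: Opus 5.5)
Your proposal is correct and follows the paper's proof essentially step for step. Both arguments bound $\E\,\calV_n(u_n^K)(t,x)$ via Lemma~\ref{lem1} with $q=2$ and the $\L^1$ and $\L^\alpha$ heat-kernel bounds, localize on $\{\tauK>T\}$ and apply Markov's inequality, transfer the tail bound by equality in law using the Borel measurability of $\calV_n(\cdot)(t,x)$, and conclude with Fatou's lemma. Your moment constant is slightly sharper than the explicit $\calC_{6,1}(K)$ in \eqref{eq:mild-convolution-constant}, but it is dominated by it, so the bound as stated follows.
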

	\begin{proof}
		Fix $(t,x)\in(0,T]\times D$, $R>0$ and $K\ge1$.
		
		\emph{Step 1: the approximations.} By \eqref{eq:approx-growth} and
		Lemma~\ref{lem1} with $q=2$, the second moments of
		$b_n(u_n^K(s,y))$ and $\sigma_n(u_n^K(s,y))$ are bounded by $J_2^K$,
		uniformly in $n,s,y$. Jensen's inequality, Tonelli's theorem and the
		heat-kernel bounds $\int_D\rho_{t-s}(x,y)\dd y\le1$ and
		$\int_D\rho_{t-s}(x,y)^\alpha\dd y\le(2\pi(t-s))^{-(\alpha-1)/2}$
		then give
		\[
		\E\,\calV_n(u_n^K)(t,x)\le\calC_{6,1}(K),\qquad n\ge1.
		\]
		On $\{\tauK>T\}$, the fields $u_n$ and $u_n^K$ agree Leb-a.e.\ on
		$\calZ_T$, so $\calV_n(u_n)(t,x)=\calV_n(u_n^K)(t,x)$
		there. By Markov's inequality,
		\begin{equation}\label{eq:mild-kernel-tail}
			\sup_{n\ge1}\P\bigl(\calV_n(u_n)(t,x)>R\bigr)
			\le\P(\tauK\le T)+\frac{\calC_{6,1}(K)}{R}.
		\end{equation}
		
		\emph{Step 2: transfer to $\widetilde u_n$.} By
		Lemma~\ref{lem:mild-functional-measurability}, the map
		$v\mapsto\calV_n(v)(t,x)$ is Borel measurable on $\calL$.
		Since $\widetilde u_n$ and $u_n$ have the same law on $\calL$,
		the bound \eqref{eq:mild-kernel-tail} also holds with
		$\widetilde\P$ and $\widetilde u_n$ in place of $\P$ and $u_n$.
		
		\emph{Step 3: the limit.} Almost surely, $\widetilde u_n\to\widetilde u$
		Leb-a.e.\ on $\calZ_T$, as shown in
		Section~\ref{subsec:joint-convergence}. Since $b_n\to b$ and
		$\sigma_n\to\sigma$ locally uniformly and $b,\sigma$ are continuous,
		we also have $b_n(\widetilde u_n)\to b(\widetilde u)$ and
		$\sigma_n(\widetilde u_n)\to\sigma(\widetilde u)$ Leb-a.e. The
		predictable representative of $\widetilde u$ agrees with this limit
		field Leb-a.e., so it gives the same value of $\calV$. By
		Fatou's lemma,
		\[
		\calV(\widetilde u)(t,x)
		\le\liminf_{n\to\infty}\calV_n(\widetilde u_n)(t,x),
		\qquad\widetilde\P\text{-a.s.}
		\]
		So, if $\calV(\widetilde u)(t,x)>R$, then
		$\calV_n(\widetilde u_n)(t,x)>R$ for all large $n$, and
		\[
		\widetilde\P\bigl(\calV(\widetilde u)(t,x)>R\bigr)
		\le\liminf_{n\to\infty}
		\widetilde\P\bigl(\calV_n(\widetilde u_n)(t,x)>R\bigr)
		\le\P(\tauK\le T)+\frac{\calC_{6,1}(K)}{R}.
		\]
		This is the stated bound. Letting $R\to\infty$ gives
		$\widetilde\P(\calV(\widetilde u)(t,x)=\infty)\le
		\P(\tauK\le T)$ for every $K$, and the right-hand side tends to
		zero as $K\to\infty$.
	\end{proof}
	
	\noindent\textit{The mild-solution map.}
	Let $v\in\calP(\widetilde{\bbB})$ be such that
	$v\in\calL$, $\widetilde\P$-a.s., and
	$\calV(v)(t,x)<\infty$, $\widetilde\P$-a.s., for each fixed
	$(t,x)\in(0,T]\times D$. For such $(t,x)$, define
	\begin{equation}\label{eq:mild-solution-map}
		\begin{aligned}
			\calT(v)(t,x)
			:={}&P_tu_0(x)
			+\int_{\calZ_t}\rho_{t-s}(x,y)b(v(s,y))\dd s\dd y\\
			&+\int_{\calZ_t}\rho_{t-s}(x,y)\sigma(v(s,y))
			\widetilde L(\dd s,\dd y),
		\end{aligned}
	\end{equation}
	and set $\calT(v)(0,x):=u_0(x)$. Both integrals are well defined.
	The drift integral converges absolutely, since the first term of
	$\calV(v)(t,x)$ is finite, $\widetilde\P$-a.s. The stochastic
	integral exists by \eqref{eq:reconstructed-noise-substitution}, since
	the second term,
	$\int_{\calZ_t}\rho_{t-s}(x,y)^\alpha|\sigma(v(s,y))|^\alpha\dd s\dd y$,
	is finite, $\widetilde\P$-a.s. Thus $\calT(v)(t,x)$ is a finite
	random variable for each fixed $(t,x)$.
	
	We next choose a predictable version of $\calT(v)$. By
	\cite{ChongVolterra}*{Lemma~6.2}, the stochastic integral in
	\eqref{eq:mild-solution-map} has a predictable modification, that is,
	a predictable field that agrees with it $\widetilde\P$-a.s.\ at
	each fixed $(t,x)$. Since that lemma is stated on the whole space, we
	extend $\widetilde L$ and $\sigma(v)$ by zero for $y\notin D$, and the
	kernel $\rho_{t-s}(x,y)$ by zero for $y\notin D$ and for $s\ge t$; we
	apply the lemma with the identity function as coefficient. The
	extended noise is a L\'evy basis with control measure
	$\mathbf1_D(y)\dd s\dd y$. The same argument, with Lebesgue measure in
	place of $\widetilde L$, gives a predictable version of the drift
	integral. Adding the deterministic function $P_tu_0(x)$, we obtain a
	predictable modification of $\calT(v)$. Setting it equal to zero
	where it is not finite changes it only on a null set at each fixed
	$(t,x)$. From now on, $\calT(v)$ denotes this modification, which
	belongs to $\calP(\widetilde{\bbB})$.
	
	By Lemma~\ref{lem:mild-kernel-integrability}, the field
	$v=\widetilde u$ satisfies these assumptions, so the predictable field
	$\calT(\widetilde u)$ is well defined. It remains to show that it
	is a representative of the weak solution.
	
	\subsection{The regularized mild identity}
	\label{subsec:regularized-mild-operator}
	
	For $\varepsilon>0$ and a field $v\in\calP(\widetilde{\bbB})$ satisfying
	$\int_{\calZ_T}(|b(v)|+|\sigma(v)|^\alpha)\dd s\dd y<\infty$
	$\widetilde\P$-a.s., define
	\begin{equation}\label{eq:regularized-mild-map}
		\begin{aligned}
			\calT_\varepsilon(v)(t,x)
			:={}&P_{t+\varepsilon}u_0(x)
			+\int_{\calZ_t}\rho_{t-s+\varepsilon}(x,y)
			b(v(s,y))\dd s\dd y\\
			&+\int_{\calZ_t}\rho_{t-s+\varepsilon}(x,y)
			\sigma(v(s,y))\widetilde L(\dd s,\dd y).
		\end{aligned}
	\end{equation}
	Only the semigroup times are shifted; the integration domain remains
	$\calZ_t$, and the coefficients are evaluated at $v(s,y)$.
	Since $0\le\rho_{t-s+\varepsilon}(x,y)\le(2\pi\varepsilon)^{-1/2}$
	for $s\le t$, both integrals are well defined at every fixed $(t,x)$.
	The field $\widetilde u$ satisfies the stated integrability condition
	by \eqref{eq:weak-integrability}.
	
	\begin{lemma}\label{lem:regularized-mild-identity}
		For every fixed $\varepsilon>0$ and $(t,x)\in(0,T]\times D$,
		\begin{equation}\label{eq:regularized-mild-identity}
			\calT_\varepsilon(\widetilde u)(t,x)
			=(P_\varepsilon\widetilde u(t))(x),
			\qquad\widetilde\P\text{-a.s.}
		\end{equation}
		Here $P_\varepsilon$ acts on the $\H_{-r}$-valued state
		$\widetilde u(t)$. The exceptional event may depend on
		$\varepsilon,t,x$.
	\end{lemma}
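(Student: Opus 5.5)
The plan is to expand both sides of \eqref{eq:regularized-mild-identity} in the eigenbasis $(e_k)$ and to verify the identity one Fourier mode at a time, using the weak equation of Theorem~\ref{thm:weak-existence}. Because $\widetilde u(t)\in\H_{-r}$ and $\varepsilon>0$,
\[
(P_\varepsilon\widetilde u(t))(x)=\sum_{k\ge1}e^{-\lambda_k\varepsilon}\langle\widetilde u(t),e_k\rangle e_k(x),
\]
and the series converges absolutely. Indeed, $|\langle\widetilde u(t),e_k\rangle|\le\lambda_k^{r/2}\|\widetilde u(t)\|_{\H_{-r}}$ and $\sum_k\lambda_k^{r/2}e^{-\lambda_k\varepsilon}<\infty$. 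On the other side, inserting the expansion $\rho_{t-s+\varepsilon}(x,y)=\sum_k e^{-\lambda_k(t-s+\varepsilon)}e_k(x)e_k(y)$ into \eqref{eq:regularized-mild-map} will express $\calT_\varepsilon(\widetilde u)(t,x)$ as the sum of the modal quantities
\[
e^{-\lambda_k\varepsilon}\Bigl[e^{-\lambda_kt}\langle u_0,e_k\rangle+\int_0^te^{-\lambda_k(t-s)}\langle b(\widetilde u(s)),e_k\rangle\dd s+\int_{\calZ_t}e^{-\lambda_k(t-s)}e_k(y)\sigma(\widetilde u(s,y))\widetilde L(\dd s,\dd y)\Bigr]e_k(x).
\]
It will then suffice to prove two things: each bracket equals $\langle\widetilde u(t),e_k\rangle$ a.s., and the interchange of the sum over $k$ with the integrals is justified.

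\emph{Step 1 (mode identity).} Take $\varphi=e_k$ in \eqref{eq:weak-form}. Writing $f_k(s)=\langle\widetilde u(s),e_k\rangle$ and using $e_k''=-2\lambda_ke_k$, this gives
\[
f_k(t)=\langle u_0,e_k\rangle-\lambda_k\int_0^tf_k\dd s+\int_0^t\langle b(\widetilde u(s)),e_k\rangle\dd s+\widetilde M^{e_k}(t),
\]
where $\widetilde M^{e_k}$ is a c\`adl\`ag semimartingale. Applying It\^o's product rule to $e^{\lambda_ks}f_k(s)$, which involves no bracket term since $e^{\lambda_k s}$ is continuous of finite variation, gives the variation-of-constants formula
\[
f_k(t)=e^{-\lambda_kt}\langle u_0,e_k\rangle+\int_0^te^{-\lambda_k(t-s)}\langle b(\widetilde u(s)),e_k\rangle\dd s+\int_0^te^{-\lambda_k(t-s)}\dd\widetilde M^{e_k}(s).
\]
The last integral has to be rewritten as $\int_{\calZ_t}e^{-\lambda_k(t-s)}e_k(y)\sigma(\widetilde u)\widetilde L(\dd s,\dd y)$. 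To do this I will use the representation \eqref{eq:mytnik-reconstruction-goal}, $\widetilde M^{e_k}=\int e_k\sigma(\widetilde u)\dd\widetilde L$, together with associativity of stochastic integrals with respect to the random measure $\widetilde L$. Concretely, I will approximate the deterministic continuous weight $e^{-\lambda_k(t-s)}$ by step functions in $s$. For step weights the identity holds by linearity. The passage to the limit uses the continuity estimate \eqref{eq:elementary-integral-continuity}, in its form for $\widetilde L$ via \eqref{eq:reconstructed-noise-substitution}, on one side, and uniform convergence of Riemann--Stieltjes sums against the semimartingale $\widetilde M^{e_k}$ on the other.

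\emph{Step 2 (summation), which I expect to be the main obstacle.} I need to show that
\[
\sum_k e^{-\lambda_k\varepsilon}e_k(x)\int_{\calZ_t}e^{-\lambda_k(t-s)}e_k(y)\sigma(\widetilde u)\dd\widetilde L
\]
equals $\int_{\calZ_t}\rho_{t-s+\varepsilon}(x,y)\sigma(\widetilde u)\dd\widetilde L$, and similarly for the drift term and the $u_0$ term. For the drift term and the $u_0$ term, dominated convergence applies: the partial sums of the kernel are bounded by $\sum_k 2e^{-\lambda_k\varepsilon}<\infty$, uniformly in $s\le t$ and $y$, and $b(\widetilde u)\in\L^1(\calZ_T)$ by \eqref{eq:weak-integrability}. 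For the stochastic term, I will use linearity on finite partial sums and then the continuity of the $\widetilde L$-integral, in the form of the Doob-type bound \eqref{eq:elementary-integral-continuity} for $\widetilde L$. What is needed is
\[
\int_{\calZ_t}\Bigl|\rho_{t-s+\varepsilon}(x,y)-\sum_{k\le m}e^{-\lambda_k(t-s+\varepsilon)}e_k(x)e_k(y)\Bigr|^\alpha|\sigma(\widetilde u)|^\alpha\dd s\dd y\to0
\]
almost surely. This follows from the uniform bound on the tail $\sum_{k>m}2e^{-\lambda_k\varepsilon}\to0$ and the fact that $\int_{\calZ_T}|\sigma(\widetilde u)|^\alpha<\infty$ a.s. Combining Steps 1 and 2 on a single event of probability one, which is possible because only countably many $k$ are involved, yields \eqref{eq:regularized-mild-identity} for the fixed $\varepsilon,t,x$.
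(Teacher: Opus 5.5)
Your proposal is correct and follows essentially the same route as the paper. You derive a modewise variation-of-constants formula from the weak equation via the product rule with $e^{\lambda_k s}$, rewriting the stochastic term through step-function approximation of the time weight. You then sum over $k\le m$ and pass to the limit using the uniform kernel tail $2\sum_{k>m}e^{-\lambda_k\varepsilon}$ together with \eqref{eq:elementary-integral-continuity} for the noise term, and $\H_{-r}$-control of the coefficients $\langle\widetilde u(t),e_k\rangle$ for the left-hand side. The only differences are cosmetic: you bound the left-hand tail by absolute convergence instead of Cauchy--Schwarz. You should also say explicitly that the stochastic partial sums converge only in probability, so the two limits are identified by comparing an a.s.\ limit with a limit in probability (e.g.\ along a subsequence), as in the paper's final step.
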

	\begin{proof}
		Fix $\varepsilon>0$, $t>0$ and $x\in D$.
		
		\smallskip
		\noindent\textit{Step 1. Apply an integrating factor to each weak identity.}
		Since $\tfrac12e_k''=-\lambda_ke_k$, the weak equation gives
		\[
		\dd\langle\widetilde u(s),e_k\rangle
		=-\lambda_k\langle\widetilde u(s),e_k\rangle\dd s
		+\langle b(\widetilde u(s,\cdot)),e_k\rangle\dd s
		+\dd\widetilde M^{e_k}(s).
		\]
		The drift pairing is defined for almost every $s$, which suffices
		for its time integral. The process $\widetilde M^{e_k}$ is a local
		martingale. Subsection~\ref{subsec:stable-noise-reconstruction} gives
		$\widetilde M^{e_k}(a)=\int_{\calZ_a}e_k(y)
		\sigma(\widetilde u(s,y))\widetilde L(\dd s,\dd y)$.
		The semimartingale product rule with the deterministic factor
		$e^{\lambda_ks}$ therefore yields
		\begin{equation}\label{eq:limit-coefficient-variation}
			\begin{aligned}
				\langle\widetilde u(t),e_k\rangle
				={}&e^{-\lambda_kt}\langle u_0,e_k\rangle\\
				&+\int_{\calZ_t}e^{-\lambda_k(t-s)}e_k(y)
				b(\widetilde u(s,y))\dd s\dd y\\
				&+\int_{\calZ_t}e^{-\lambda_k(t-s)}e_k(y)
				\sigma(\widetilde u(s,y))\widetilde L(\dd s,\dd y).
			\end{aligned}
		\end{equation}
		Writing the last term as a space--time integral uses the
		multiplication property of stochastic integration: it holds first
		for step functions of time, then for the factor $e^{-\lambda_k (t-s)}$ by uniform approximation. 
		
		\smallskip
		\noindent\textit{Step 2. Sum finitely many identities.}
		For $m\ge1$, put
		\[
		\rho_v^{(m)}(x,y):=\sum_{k=1}^m
		e^{-\lambda_kv}e_k(x)e_k(y),\qquad v>0,
		\qquad
		S_{\varepsilon,m}(t,x):=\sum_{k=1}^m
		e^{-\lambda_k\varepsilon}e_k(x)\langle\widetilde u(t),e_k\rangle.
		\]
		Multiplying \eqref{eq:limit-coefficient-variation} by
		$e^{-\lambda_k\varepsilon}e_k(x)$ and summing over $k\le m$ gives
		\begin{equation}\label{eq:regularized-finite-identity}
			\begin{aligned}
				S_{\varepsilon,m}(t,x)
				={}&\int_D\rho_{t+\varepsilon}^{(m)}(x,y)u_0(y)\dd y\\
				&+\int_{\calZ_t}\rho_{t-s+\varepsilon}^{(m)}(x,y)
				b(\widetilde u(s,y))\dd s\dd y\\
				&+\int_{\calZ_t}\rho_{t-s+\varepsilon}^{(m)}(x,y)
				\sigma(\widetilde u(s,y))\widetilde L(\dd s,\dd y)
				=:\calT_{\varepsilon,m}(\widetilde u)(t,x).
			\end{aligned}
		\end{equation}
		Taking the intersection
		of the probability-one events for the countably many $k$, the
		identities \eqref{eq:regularized-finite-identity} hold for every
		$m$ on one event of probability one.
		
		\smallskip
		\noindent\textit{Step 3. Pass to the limit on the right-hand side.}
		By $|e_k|\le\sqrt2$ and \eqref{eq:heat-kernel},
		\begin{equation}\label{eq:regularized-kernel-tail}
			\sup_{\substack{v\ge\varepsilon\\x,y\in D}}
			|\rho_v^{(m)}(x,y)-\rho_v(x,y)|
			\le 2\sum_{k>m}e^{-\lambda_k\varepsilon}
			=:d_{m,\varepsilon}\longrightarrow0 \quad\text{as } m\to\infty.
		\end{equation}
		The differences in the initial and drift terms are bounded,
		respectively, by
		\[
		d_{m,\varepsilon}\|u_0\|_{\L^1(D)},
		\qquad
		d_{m,\varepsilon}\int_{\calZ_t}
		|b(\widetilde u(s,y))|\dd s\dd y.
		\]
		Both tend to zero, the second $\widetilde\P$-a.s., by
		\eqref{eq:weak-integrability}.
		
		For the stochastic term define, on $\calZ_t$,
		\[
		H_{\varepsilon,m}^{t,x}(s,y)
		:=\bigl(\rho_{t-s+\varepsilon}^{(m)}(x,y)
		-\rho_{t-s+\varepsilon}(x,y)\bigr)
		\sigma(\widetilde u(s,y)).
		\]
		This is predictable, and
		\begin{equation}\label{eq:regularized-stochastic-error}
			\int_{\calZ_t}|H_{\varepsilon,m}^{t,x}(s,y)|^\alpha\dd s\dd y
			\le d_{m,\varepsilon}^\alpha
			\int_{\calZ_t}|\sigma(\widetilde u(s,y))|^\alpha\dd s\dd y
			\longrightarrow0,\qquad\widetilde\P\text{-a.s.}
		\end{equation}
		Equation~\eqref{eq:reconstructed-noise-substitution} identifies its
		integral with the $\scrM$-integral of the kernel difference, and
		the corresponding value $\widetilde A^{H}(T)$ of its integrand $H$
		against $\scrM$ is exactly the
		left side of \eqref{eq:regularized-stochastic-error}.
		Thus, \eqref{eq:elementary-integral-continuity}, extended by the
		construction to integrable fields, gives, for $\delta,R>0$,
		\[
		\begin{aligned}
			&\widetilde\P\left(
			\left|\int_{\calZ_t}H_{\varepsilon,m}^{t,x}(s,y)
			\widetilde L(\dd s,\dd y)\right|
			>\delta\right)\\
			&\quad\le
			\widetilde\P\left(
			\int_{\calZ_t}|H_{\varepsilon,m}^{t,x}|^\alpha\dd s\dd y\ge R
			\right)
			+\frac{\calC_{5,1}(\alpha)R^{1/\alpha}}{\delta}.
		\end{aligned}
		\]
		First let $m\to\infty$ with $R$ fixed, and then let $R\downarrow0$.
		The stochastic difference tends to zero in probability. Combining
		the three terms proves
		\begin{equation}\label{eq:regularized-operator-partial-limit}
			\calT_{\varepsilon,m}(\widetilde u)(t,x)
			\longrightarrow\calT_\varepsilon(\widetilde u)(t,x)
			\quad\text{in }\widetilde\P\text{-probability}.
		\end{equation}
		This step uses only the almost sure bound \eqref{eq:weak-integrability},
		and no moments of $\widetilde u$.
		
		\smallskip
		\noindent\textit{Step 4. Pass to the limit on the left-hand side.}
		The spectral definition of $P_\varepsilon$ and Cauchy--Schwarz give
		\[
		\left|S_{\varepsilon,m}(t,x)
		-(P_\varepsilon\widetilde u(t))(x)\right|
		\le\|\widetilde u(t)\|_{\H_{-r}}
		\left(2\sum_{k>m}\lambda_k^r
		e^{-2\lambda_k\varepsilon}\right)^{1/2}
		\longrightarrow0,\qquad\widetilde\P\text{-a.s.}
		\]
		The series converges since $\lambda_k=\pi^2k^2/2$ and
		$\varepsilon>0$. 
		
		\smallskip
		\noindent\textit{Step 5. Identify the two limits.}
		For every $\delta>0$, \eqref{eq:regularized-finite-identity} implies
		\[
		\begin{aligned}
			&\widetilde\P\left(
			|(P_\varepsilon\widetilde u(t))(x)
			-\calT_\varepsilon(\widetilde u)(t,x)|>\delta\right)\\
			&\quad\le
			\widetilde\P\left(
			|(P_\varepsilon\widetilde u(t))(x)-S_{\varepsilon,m}(t,x)|
			>\delta/2\right)\\
			&\qquad+
			\widetilde\P\left(
			|\calT_{\varepsilon,m}(\widetilde u)(t,x)
			-\calT_\varepsilon(\widetilde u)(t,x)|>\delta/2\right).
		\end{aligned}
		\]
		Both terms on the right tend to zero. The probability on the left
		does not depend on $m$, so it is zero. This proves
		\eqref{eq:regularized-mild-identity}.
	\end{proof}
	
	\subsection{Removing the regularization}
	\label{subsec:removing-regularization}
	
	Lemma~\ref{lem:regularized-mild-identity} identifies
	$\calT_\varepsilon(\widetilde u)(t,x)$ with
	$(P_\varepsilon\widetilde u(t))(x)$ for each fixed $\varepsilon>0$,
	but its proof uses the bound \eqref{eq:regularized-kernel-tail}, which
	depends on $\varepsilon$. To let $\varepsilon\downarrow0$, we compare
	the shifted kernel $\rho_{t-s+\varepsilon}$ with $\rho_{t-s}$
	directly. For $p\in\{1,\alpha\}$, set
	\begin{equation}\label{eq:kernel-shift-error}
		\calR_{\varepsilon,p}(t,x)
		:=\int_{\calZ_t}
		|\rho_{t-s+\varepsilon}(x,y)-\rho_{t-s}(x,y)|^p\dd s\dd y,
	\end{equation}
	and, for $v\in\calL$,
	\begin{equation}\label{eq:mild-shift-error-functional}
		\begin{aligned}
			\calV_\varepsilon(v)(t,x)
			:={}&\int_{\calZ_t}
			|\rho_{t-s+\varepsilon}(x,y)-\rho_{t-s}(x,y)|
			|b(v(s,y))|\dd s\dd y\\
			&+\int_{\calZ_t}
			|\rho_{t-s+\varepsilon}(x,y)-\rho_{t-s}(x,y)|^\alpha
			|\sigma(v(s,y))|^\alpha\dd s\dd y.
		\end{aligned}
	\end{equation}
	In these integrands we set $\rho_0(x,y):=0$ at $s=t$. This does not
	change the integrals, since the time slice $\{s=t\}$ has Lebesgue
	measure zero and $\widetilde L$ almost surely has no jump at the fixed
	time $t$. We write $\calV_{\varepsilon,n}$ for the same functional
	with $b_n,\sigma_n$ in place of $b,\sigma$.

	\begin{lemma}\label{lem:remove-mild-regularization}
		For each fixed $(t,x)\in(0,T]\times D$,
		\begin{equation}\label{eq:mild-regularization-limit}
			\calT_\varepsilon(\widetilde u)(t,x)
			\longrightarrow\calT(\widetilde u)(t,x)
			\quad\text{in }\widetilde\P\text{-probability}
			\quad\text{as }\varepsilon\downarrow0.
		\end{equation}
		More precisely, for every $\varepsilon,R>0$ and every integer
		$K\ge1$,
		\begin{equation}\label{eq:mild-shift-error-tail}
			\widetilde\P\bigl(\calV_\varepsilon(\widetilde u)(t,x)>R\bigr)
			\le\P(\tauK\le T)
			+\frac1R\sum_{p\in\{1,\alpha\}}
			\calC_{6,2}(K,p)\calR_{\varepsilon,p}(t,x),
		\end{equation}
		where $\calC_{6,2}(K,p)=(J_2^K)^{p/2}$, as in
		\eqref{eq:mild-shift-constant}. These constants do not depend on
		$n$, $\varepsilon$, $t$, $x$ or $R$.
	\end{lemma}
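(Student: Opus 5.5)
We follow the three-step pattern of Lemma~\ref{lem:mild-kernel-integrability}: bound $\calV_{\varepsilon,n}(u_n)(t,x)$ for the approximations, transfer to $\widetilde u_n$ by equality in law, and pass to $\widetilde u$ by Fatou. The convergence \eqref{eq:mild-regularization-limit} then follows from the tail bound \eqref{eq:mild-shift-error-tail} together with the continuity estimate \eqref{eq:elementary-integral-continuity}.

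\emph{Step 1.} For the approximations, \eqref{eq:approx-growth} and Lemma~\ref{lem1} with $q=2$ give $\E|b_n(u_n^K(s,y))|^2\le J_2^K$ and $\E|\sigma_n(u_n^K(s,y))|^2\le J_2^K$, uniformly in $n,s,y$. For $p\in\{1,\alpha\}$, Jensen's inequality gives $\E|\cdot|^p\le(J_2^K)^{p/2}$. Tonelli then yields
\[
\E\,\calV_{\varepsilon,n}(u_n^K)(t,x)\le\sum_{p\in\{1,\alpha\}}(J_2^K)^{p/2}\calR_{\varepsilon,p}(t,x).
\]
On $\{\tauK>T\}$ we have $u_n=u_n^K$ Leb-a.e., so Markov's inequality gives the tail bound for $\calV_{\varepsilon,n}(u_n)(t,x)$, uniformly in $n$.

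\emph{Step 2.} Exactly as in Lemma~\ref{lem:mild-functional-measurability}, $\calV_{\varepsilon,n}(\cdot)(t,x)$ is lower semicontinuous on $\calL$, since the integrands are nonnegative and continuous in $v$. Hence the bound transfers to $\widetilde u_n$ by equality in law.

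\emph{Step 3.} Since $\widetilde u_n\to\widetilde u$ Leb-a.e., Fatou's lemma gives $\calV_\varepsilon(\widetilde u)\le\liminf_n\calV_{\varepsilon,n}(\widetilde u_n)$. The same liminf argument as before then proves \eqref{eq:mild-shift-error-tail}.

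\emph{Step 4: the convergence.} The main point is the deterministic statement $\calR_{\varepsilon,p}(t,x)\to0$ as $\varepsilon\downarrow0$ for $p\in\{1,\alpha\}$. We argue as follows:
\begin{itemize}
\item For each $s<t$, the kernel $\rho_{t-s+\varepsilon}(x,\cdot)\to\rho_{t-s}(x,\cdot)$ pointwise.
\item The Gaussian comparison $\rho_v(x,y)\le(2\pi v)^{-1/2}e^{-(x-y)^2/(2v)}$ bounds $\rho_{t-s+\varepsilon}(x,y)^p$ by a constant times $(t-s+\varepsilon)^{-p/2}e^{-p(x-y)^2/(2(t-s+\varepsilon))}$.
\item Integrating in $y$ gives $(t-s+\varepsilon)^{(1-p)/2}\le(t-s)^{(1-p)/2}$ when $p\ge1$. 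This is integrable on $(0,t)$ because $(p-1)/2<1$.
\item For the term with $\rho_{t-s}$, use $|a-b|^p\le2^{p-1}(|a|^p+|b|^p)$ and the analogous bound at time $t-s$.
\item The dominating functions themselves converge (Scheffé, or generalized dominated convergence), because the Gaussian integrals converge.
\end{itemize}
Hence $\calR_{\varepsilon,p}(t,x)\to0$.

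Consequently, for fixed $K$ and $R$, \eqref{eq:mild-shift-error-tail} gives
\[
\limsup_{\varepsilon\downarrow0}\widetilde\P\bigl(\calV_\varepsilon(\widetilde u)(t,x)>R\bigr)\le\P(\tauK\le T).
\]
Letting $K\to\infty$ shows that $\calV_\varepsilon(\widetilde u)(t,x)\to0$ in probability, for every $R>0$.

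Now split $\calT_\varepsilon(\widetilde u)(t,x)-\calT(\widetilde u)(t,x)$ into three terms:
\begin{itemize}
\item The initial term: $|P_{t+\varepsilon}u_0(x)-P_tu_0(x)|\le\|u_0\|_{\L^\infty(D)}\int_D|\rho_{t+\varepsilon}-\rho_t|(x,y)\dd y\to0$, by the same Scheffé argument for $t>0$ fixed.
\item The drift term: it is bounded by the first part of $\calV_\varepsilon(\widetilde u)(t,x)$, which tends to zero in probability.
\item The stochastic term: this is the integral against $\widetilde L$ of $H=(\rho_{t-s+\varepsilon}-\rho_{t-s})\sigma(\widetilde u)$. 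Its $\L^\alpha$ norm is the second part of $\calV_\varepsilon(\widetilde u)(t,x)$. By \eqref{eq:reconstructed-noise-substitution}, the continuity estimate \eqref{eq:elementary-integral-continuity} applies with this quantity in place of $\widetilde A^H(T)$. Letting first $\varepsilon\downarrow0$ and then $R\downarrow0$ shows that this term tends to zero in probability.
\end{itemize}
This proves \eqref{eq:mild-regularization-limit}.

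The main obstacle is the deterministic convergence $\calR_{\varepsilon,p}(t,x)\to0$. It needs a dominating bound that is integrable near $s=t$ uniformly in $\varepsilon$, which the Gaussian comparison supplies.
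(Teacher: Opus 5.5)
Your proposal is correct and follows the paper's proof: the same Markov, equality-in-law and Fatou transfer of the tail bound from $u_n^K$ to $\widetilde u$, followed by the three-term splitting of $\calT_\varepsilon(\widetilde u)(t,x)-\calT(\widetilde u)(t,x)$, with \eqref{eq:elementary-integral-continuity} controlling the stochastic term. The only difference is in how you show $\calR_{\varepsilon,p}(t,x)\to0$, and it is minor: the paper uses uniform convergence $\rho_{a+\varepsilon}\to\rho_a$ on $D\times D$ from the spectral series, the bound $\int_D\rho_a(x,y)^p\dd y\le(2\pi a)^{-(p-1)/2}$, and ordinary dominated convergence in $a$, whereas you use pointwise convergence, Gaussian domination of the Dirichlet kernel, and generalized dominated convergence, which is equally valid.
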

	\begin{proof}
		Fix $(t,x)\in(0,T]\times D$.
		
		\emph{Step 1: the kernel errors tend to zero.} Fix $a>0$. By the
		spectral representation \eqref{eq:heat-kernel},
		$\rho_{a+\varepsilon}\to\rho_a$ uniformly on $D\times D$ as
		$\varepsilon\downarrow0$. Moreover, for $p\in\{1,\alpha\}$, the
		bound $\int_D\rho_a(x,y)^p\dd y\le(2\pi a)^{-(p-1)/2}$ from the proof
		of Lemma~\ref{lem1} gives
		\[
		\int_D|\rho_{a+\varepsilon}(x,y)-\rho_a(x,y)|^p\dd y
		\le 2^{p-1}\int_D
		\bigl(\rho_{a+\varepsilon}(x,y)^p+\rho_a(x,y)^p\bigr)\dd y
		\le 2^p(2\pi a)^{-(p-1)/2}.
		\]
		The right-hand side is integrable over $a\in(0,t)$, because
		$(p-1)/2<1$. With $a=t-s$, dominated convergence gives
		\begin{equation}\label{eq:kernel-shift-error-limit}
			\calR_{\varepsilon,1}(t,x)\longrightarrow0
			\quad\text{and}\quad
			\calR_{\varepsilon,\alpha}(t,x)\longrightarrow0
			\qquad\text{as }\varepsilon\downarrow0.
		\end{equation}
		
		\emph{Step 2: the error functional tends to zero.} By
		\eqref{eq:approx-growth} and Lemma~\ref{lem1} with $q=2$, the second
		moments of $b_n(u_n^K(s,y))$ and $\sigma_n(u_n^K(s,y))$ are bounded
		by $J_2^K$, uniformly in $n,s,y$. Jensen's inequality and Tonelli's
		theorem give
		\[
		\E\,\calV_{\varepsilon,n}(u_n^K)(t,x)
		\le (J_2^K)^{1/2}\calR_{\varepsilon,1}(t,x)
		+(J_2^K)^{\alpha/2}\calR_{\varepsilon,\alpha}(t,x),
		\qquad n\ge1.
		\]
		From here, the argument is the same as in Steps~2 and~3 of the proof
		of Lemma~\ref{lem:mild-kernel-integrability}, with
		$\calV_{\varepsilon,n}$ and $\calV_\varepsilon$ in place of
		$\calV_n$ and $\calV$. We use that $u_n=u_n^K$ Leb-a.e.\ on
		$\{\tauK>T\}$, Markov's inequality, equality in law, and Fatou's
		lemma. The map $v\mapsto\calV_{\varepsilon,n}(v)(t,x)$ is Borel
		measurable on $\calL$ by the proof of
		Lemma~\ref{lem:mild-functional-measurability}, since its weights
		are nonnegative. This proves \eqref{eq:mild-shift-error-tail}. For
		fixed $R>0$, we first let $\varepsilon\downarrow0$ with $K$ fixed,
		using \eqref{eq:kernel-shift-error-limit}, and then let
		$K\to\infty$. We obtain
		\begin{equation}\label{eq:mild-shift-functional-limit}
			\calV_\varepsilon(\widetilde u)(t,x)\longrightarrow0
			\quad\text{in }\widetilde\P\text{-probability}.
		\end{equation}
		
		\emph{Step 3: convergence of the mild map.} We compare the three
		terms of $\calT_\varepsilon(\widetilde u)(t,x)$ and
		$\calT(\widetilde u)(t,x)$. For the initial term, Step~1 with
		$a=t$ gives
		\[
		|P_{t+\varepsilon}u_0(x)-P_tu_0(x)|
		\le\|u_0\|_{\L^1(D)}
		\sup_{y\in D}|\rho_{t+\varepsilon}(x,y)-\rho_t(x,y)|
		\longrightarrow0.
		\]
		The difference of the drift terms is bounded in absolute value by
		$\calV_\varepsilon(\widetilde u)(t,x)$, so it tends to zero in
		probability by \eqref{eq:mild-shift-functional-limit}. For the
		stochastic terms, put
		\[
		H_\varepsilon^{t,x}(s,y)
		:=\bigl(\rho_{t-s+\varepsilon}(x,y)-\rho_{t-s}(x,y)\bigr)
		\sigma(\widetilde u(s,y)),
		\qquad(s,y)\in\calZ_t.
		\]
		Both stochastic integrals are well defined, by
		Lemma~\ref{lem:mild-kernel-integrability} and since the shifted
		kernel is bounded by $(2\pi\varepsilon)^{-1/2}$. Moreover,
		$\int_{\calZ_t}|H_\varepsilon^{t,x}|^\alpha\dd s\dd y
		\le\calV_\varepsilon(\widetilde u)(t,x)$. The estimate used in
		Step~3 of the proof of Lemma~\ref{lem:regularized-mild-identity}
		gives, for $\delta,R>0$,
		\[
		\widetilde\P\left(
		\left|\int_{\calZ_t}H_\varepsilon^{t,x}(s,y)
		\widetilde L(\dd s,\dd y)\right|>\delta\right)
		\le
		\widetilde\P\bigl(\calV_\varepsilon(\widetilde u)(t,x)\ge R\bigr)
		+\frac{\calC_{5,1}(\alpha)R^{1/\alpha}}{\delta}.
		\]
		For fixed $R$, the first term tends to zero by
		\eqref{eq:mild-shift-functional-limit}, and the second term is small
		when $R$ is small. So the difference of the stochastic terms also
		tends to zero in probability. Together, the three terms prove
		\eqref{eq:mild-regularization-limit}.
	\end{proof}
	
	\subsection{The predictable mild representative}
	\label{subsec:predictable-mild-representative}
	Let $v:=\calT(\widetilde u)$, with the predictable modification
	chosen in Subsection~\ref{subsec:mild-kernel-map}, and set
	$v(0,x):=u_0(x)$.
	
	\emph{Step 1: $v(t,\cdot)$ is a density of $\widetilde u(t)$.} Fix
	$t\in(0,T]$. By the end of the proof of
	Theorem~\ref{thm:weak-existence}, $\widetilde u(t)\in\L^2(D)$
	with density $\widetilde u(t,\cdot)$, $\widetilde\P$-a.s. The heat
	semigroup is strongly continuous on $\L^2(D)$. Therefore
	\[
	\|P_{1/j}\widetilde u(t)-\widetilde u(t,\cdot)\|_{\L^2(D)}
	\longrightarrow0,\qquad\widetilde\P\text{-a.s.}
	\]
	Since $\operatorname{Leb}(D)=1$, the Cauchy--Schwarz inequality and bounded convergence
	give
	\[
	\widetilde\E\int_D
	\bigl(|(P_{1/j}\widetilde u(t))(x)-\widetilde u(t,x)|\wedge1\bigr)
	\dd x\longrightarrow0.
	\]
	Now fix $x\in D$. By Lemmas~\ref{lem:regularized-mild-identity}
	and~\ref{lem:remove-mild-regularization},
	$(P_{1/j}\widetilde u(t))(x)\to v(t,x)$ in probability. So
	$\widetilde\E(|(P_{1/j}\widetilde u(t))(x)-v(t,x)|\wedge1)\to0$ for
	each $x$. These expectations are bounded by $1$. By Tonelli's theorem
	and bounded convergence in $x$,
	\[
	\widetilde\E\int_D
	\bigl(|(P_{1/j}\widetilde u(t))(x)-v(t,x)|\wedge1\bigr)
	\dd x\longrightarrow0.
	\]
	Since $|a-c|\wedge1\le(|a-b|\wedge1)+(|b-c|\wedge1)$, we obtain
	$\widetilde\E\int_D(|v(t,x)-\widetilde u(t,x)|\wedge1)\dd x=0$, that
	is,
	\begin{equation}\label{eq:mild-fixed-time-density}
		\widetilde\P\bigl(
		v(t,x)=\widetilde u(t,x)\text{ for Leb-a.e. }x\in D
		\bigr)=1,\qquad t\in(0,T].
	\end{equation}
	In particular, for every $t\in(0,T]$,
	\[
	\widetilde\P\!\left(
	\langle\widetilde u(t),\varphi\rangle
	=\int_D v(t,x)\varphi(x)\dd x
	\quad\text{for all }\varphi\in\calE
	\right)=1.
	\]
	So $v$ is a density of $\widetilde u$, and condition~(i) of
	Definition~\ref{def:weak-solution} holds with the field $v$.
	
	\emph{Step 2: $v$ is also a weak solution.} Both $v$ and
	$\widetilde u$ are predictable. Integrating
	\eqref{eq:mild-fixed-time-density} in $t$ and using Tonelli's theorem
	gives
	\begin{equation}\label{eq:mild-field-spacetime-agreement}
		v(s,y)=\widetilde u(s,y)
		\quad\text{for Leb-a.e. }(s,y)\in\calZ_T,
		\qquad\widetilde\P\text{-a.s.}
	\end{equation}
	By \eqref{eq:limit-L2}, $v\in\L^2(\calZ_T)\subset\calL$,
	$\widetilde\P$-a.s. By \eqref{eq:mild-field-spacetime-agreement},
	$b(v)=b(\widetilde u)$ and $\sigma(v)=\sigma(\widetilde u)$ Leb-a.e.
	The drift integrals and condition~(iii) are therefore unchanged. For
	$\varphi\in\calE$, the field
	$\varphi[\sigma(v)-\sigma(\widetilde u)]$ has zero
	$\alpha$-integral, so its stochastic integral is zero. Since all
	processes are c\`adl\`ag, this holds for all $t$ on one event of
	probability one. So condition~(ii) holds with the field $v$, the same
	state, filtration and noise.
	
	\emph{Step 3: $v$ is a mild solution.} Fix $(t,x)\in(0,T]\times D$.
	By \eqref{eq:mild-field-spacetime-agreement},
	$\calV(v)(t,x)=\calV(\widetilde u)(t,x)$, which is finite
	almost surely by Lemma~\ref{lem:mild-kernel-integrability}. The drift
	integrals for $v$ and $\widetilde u$ coincide. The difference of the
	stochastic integrals is zero, since
	\[
	\int_{\calZ_t}\rho_{t-s}(x,y)^\alpha
	\bigl|\sigma(v(s,y))-\sigma(\widetilde u(s,y))\bigr|^\alpha
	\dd s\dd y=0,\qquad\widetilde\P\text{-a.s.}
	\]
	Therefore
	\[
	\calT(v)(t,x)=\calT(\widetilde u)(t,x)=v(t,x),
	\qquad\widetilde\P\text{-a.s.}
	\]
	So $v$ is a mild solution in the sense of
	Definition~\ref{def:mild-solution}, on the same stochastic basis and
	with the same noise. This proves Theorem~\ref{thm:mild-existence}.

	\appendix
	\section{Constants in the a priori estimates}\label{app:constants}
	We collect explicit choices for the constants in
	Lemmas~\ref{lem1}--\ref{lem:nonlinear-convergence} and
	Proposition~\ref{prop:limit-martingale}, and the integral construction
	in Subsection~\ref{subsec:weighted-stochastic-integration}, and
	Lemmas~\ref{lem:mild-kernel-integrability}
	and~\ref{lem:remove-mild-regularization},
	in the order in which they are used. Their parameter ranges and dependencies are stated in those
	results. The fixed parameters
	$T,\Cgrow,\|u_0\|_{\L^\infty(D)}$ are suppressed in the notation.
	Recall that $\lambda_1=\pi^2/2$ and
	$m_p(K)=2c_\alpha K^{p-\alpha}/(p-\alpha)$ for $p>\alpha$.
	The quantities $J_p^K$ and $D_p^K$ are defined before \eqref{eq:src},
	and $\zeta$ denotes the Riemann zeta function.
	
	\noindent\textit{Moment bound (Lemma~\ref{lem1}).}
	\begin{equation}\label{eq:C1-explicit}
		\begin{gathered}
			\eta_q:=\frac{3-q}{2},\qquad h_q:=(2\pi)^{-(q-1)/2},
			\qquad a_q:=1+3^{q-1}\|u_0\|_{\L^\infty(D)}^q,\\
			d_{q,K}:=3^{q-1}2^{q-1}(2\Cgrow)^q
			\bigl[T^{3(q-1)/2}+B_qm_q(K)h_q\bigr],\\
			\Cbound{q}{K}:=a_q E_{\eta_q}\bigl(d_{q,K}\Gamma(\eta_q)T^{\eta_q}\bigr).
		\end{gathered}
	\end{equation}
	Here $E_\eta(z):=\sum_{j=0}^\infty z^j/\Gamma(1+j\eta)$ is the
	Mittag--Leffler function, and $B_q$ is a Burkholder--Davis--Gundy
	constant, depending only on $q$, such that
	\[
	\E\sup_{s\le t}|M_s|^q\le B_q\E[M]_t^{q/2}
	\]
	for real local martingales $M$ with $M_0=0$; one may take $B_2=4$.
	
	\noindent\textit{Compensated-integral estimates.}
	Let $\alpha<q\le2$, let $H$ be a predictable $\LK$-integrable field, and
	let $I_t:=\int_{\calZ_t}H(s,y)\LK(\dd s,\dd y)$ be the c\`adl\`ag version.
	We apply \cite{CDH}*{Lemma~A.2(2)} with spatial set $E=D$, compensated
	measure $\Nt=N-\m$, where $\m(\dd s,\dd y,\dd z)=\dd s\dd y\nu_\alpha(\dd z)$,
	and $W(s,y,z)=z\ind{0<|z|\le K}$. Since
	$\int_\R|z|^q\ind{0<|z|\le K}\nu_\alpha(\dd z)=m_q(K)$, this lemma bounds
	$\E|I_t|^q$ by a constant times $m_q(K)\E\int_{\calZ_T}|H(s,y)|^q\dd s\dd y$,
	for each $t\le T$. Since $I$ is a martingale and $q>1$, Doob's maximal
	inequality extends this bound to the supremum over $t$:
	\begin{equation}\label{eq:compensated-maximal}
		\E\sup_{t\le T}|I_t|^q
		\le B_qm_q(K)\E\int_{\calZ_T}|H(s,y)|^q\dd s\dd y,
	\end{equation}
	whenever the right-hand side is finite. Here we enlarge $B_q$, if
	necessary, so that it also includes the constants of
	\cite{CDH}*{Lemma~A.2(2)} and of Doob's inequality.
	For stopping times $0\le\tau\le\tau'\le T$, the field
	$\mathbf1_{\{\tau<s\le\tau'\}}H(s,y)$ is predictable, and its integral up
	to time $t$ is $I_{t\wedge\tau'}-I_{t\wedge\tau}$. Applying
	\eqref{eq:compensated-maximal} to this field gives
	\begin{equation}\label{eq:compensated-stopped}
		\E\sup_{\tau\le t\le\tau'}|I_t-I_\tau|^q
		\le B_qm_q(K)\E\int_{\calZ_T}
		\mathbf1_{\{\tau<s\le\tau'\}}|H(s,y)|^q\dd s\dd y.
	\end{equation}
	
	\noindent\textit{Spatial and maximal bounds (Lemma~\ref{lem2}).}
	\begin{equation}\label{eq:space-sup-constants}
		\begin{aligned}
			C_{1,1}^K(q,\eta)
			&:=3^{q-1}\left[\frac{\|u_0\|_{\L^\infty(D)}^q}{q}
			+TD_q^K\lambda_1^{1-q}
			+\frac{TB_qm_q(K)D_q^K}{q}\right]
			\lambda_1^{\eta q/2-1}\zeta(2-\eta q),\\
			C_{1,2}^K(q,r)
			&:=3^{q-1}\left[\|u_0\|_{\L^\infty(D)}^q+T^qD_q^K
			+2^qB_qm_q(K)TD_q^K\right]
			\lambda_1^{-rq/2}\zeta(rq).
		\end{aligned}
	\end{equation}
	
	\noindent\textit{Time increments (Lemma~\ref{lem:time-increments}).}
	Let $\gamma$ be as in Lemma~\ref{lem:time-increments}, and set
	\begin{equation}\label{eq:time-auxiliary-constants}
		\begin{gathered}
			S_{q,r}:=\lambda_1^{-rq/2}\zeta(rq),\qquad
			V_q^K:=\frac{C_{1,2}^K(q,r)}{S_{q,r}},\\
			\frakS_{q,r,\gamma}:=
			\sup_{0<h\le1}h^{-\gamma}\sum_{k\ge1}
			\lambda_k^{-rq/2}\min(1,\lambda_kh)^q<\infty,\qquad
			N_{q,q'}^K:=2^{q/2}B_qm_q(K)(TJ_{q'}^K)^{q/q'}.
		\end{gathered}
	\end{equation}
	By \eqref{eq:space-sup-constants}, $V_q^K$ does not depend on $r$.
	We may then take
	\begin{equation}\label{eq:time-constants}
		\begin{aligned}
			C_{2,1}^K(q,q',r)
			&:=3^{q-1}\bigl[V_q^K\frakS_{q,r,\gamma}
			+(TD_q^K+2^qN_{q,q'}^K)S_{q,r}\bigr]
			+2^qC_{1,2}^K(q,r),\\
			C_{2,2}^K(q,q',r,\vartheta)
			&:=TC_{1,2}^K(q,r)
			+\frac{2C_{2,1}^K(q,q',r)T^{1+\gamma-\vartheta q}}
			{(\gamma-\vartheta q)(1+\gamma-\vartheta q)}.
		\end{aligned}
	\end{equation}
	
	\noindent\textit{Space--time tails and nonlinear convergence
		(Lemmas~\ref{lem:full-integrability-probability} and~\ref{lem:nonlinear-convergence}).}
	\begin{equation}\label{eq:tail-constants}
		C_{3,1}^K(q'):=T\Cbound{q'}{K},\qquad
		C_{3,2}(C_\Psi,\beta,q'):=(2C_\Psi)^{q'/\beta}.
	\end{equation}
	Here $\alpha<q'\le2$ and $0<\beta<q'$.
	The constant $C_{3,1}^K(q')$ depends only on
	$q',T,\Cgrow,\|u_0\|_{\L^\infty(D)}$ and $m_{q'}(K)$, and
	$C_{3,2}$ depends only on $C_\Psi,\beta,q'$.
	In particular, neither constant depends on $n$, $\Lambda_n$, or the
	thresholds $R$ and $a$. Note that $C_{3,1}^K(q')\to\infty$ as
	$K\to\infty$, since $m_{q'}(K)$ does.
	For each fixed $\varphi\in\calE$, we can take
	$C_\Psi=2^{2\alpha}\Cgrow^\alpha\|\varphi\|_{\L^\infty(D)}^\alpha$ for the
	noise term and $C_\Psi=4\Cgrow\|\varphi\|_{\L^\infty(D)}$ for the drift term.
	
	\noindent\textit{Bounded martingales (Proposition~\ref{prop:limit-martingale}).}
	For $f\in C_c^1([0,\infty);\R)$,
	\[
	\calC_{4,1}(f,\alpha,\theta)
	:=2\|f\|_{\infty}+\int_0^\infty
	|f'(a)-|\theta|^\alpha f(a)|\dd a.
	\]
	This constant bounds both $\widetilde Y_n^{f,\varphi,\theta}$ and
	$\widetilde Y^{f,\varphi,\theta}$ in
	\eqref{eq:smooth-cutoff-martingale}. With $f=\chi$, it also bounds
	the cutoff martingales used in
	Subsection~\ref{subsec:weighted-stochastic-integration} to extend
	\eqref{eq:elementary-integral-exponential}, whatever the elementary
	approximation and its integrand are. It depends only on
	$f,\alpha,\theta$, and not on $n$, $T$, $\varphi$, or $\omega$.
	
	\noindent\textit{Continuity of the elementary integral
		(Subsection~\ref{subsec:weighted-stochastic-integration}).}
	\[
	\calC_{5,1}(\alpha)
	:=\frac{2^{\,2-1/\alpha}\alpha}{\pi(\alpha-1)},
	\qquad 1<\alpha<2.
	\]
	This is the constant in \eqref{eq:elementary-integral-continuity}.
	It comes from integrating
	$\min\{2,R\theta^\alpha\}\theta^{-2}$ over $(0,\infty)$ and
	applying Doob's maximal inequality. Once the normalization of the
	stable noise is fixed, it depends only on $\alpha$, and not on
	$H$, $R$, $\varepsilon$, $n$, $K$, $r$, $T$, $\Cgrow$, $u_0$, or $\omega$.
	The bound depends on $H$ only through $\widetilde A^H(T)$. Note that
	$\calC_{5,1}(\alpha)\to\infty$ as $\alpha\downarrow1$.
	
	\noindent\textit{Integrability of the mild convolutions
		(Lemma~\ref{lem:mild-kernel-integrability}).}
	\begin{equation}\label{eq:mild-convolution-constant}
		\calC_{6,1}(K)
		:=T(J_2^K)^{1/2}
		+\frac{2(2\pi)^{-(\alpha-1)/2}}{3-\alpha}
		T^{(3-\alpha)/2}(J_2^K)^{\alpha/2},
		\qquad
		J_2^K=2(2\Cgrow)^2\bigl(1+\Cbound{2}{K}\bigr).
	\end{equation}
	This constant depends only on $\alpha,T,\Cgrow,
	\|u_0\|_{\L^\infty(D)}$ and $m_2(K)$; in particular, it depends on
	$c_\alpha$ and $K$ only through $m_2(K)$.
	It does not depend on $n$, $\Lambda_n$, $t$, $x$, $r$, or the threshold $R$.
	Note that $\calC_{6,1}(K)\to\infty$ as $K\to\infty$. This causes no
	problem, because in the proof of Lemma~\ref{lem:mild-kernel-integrability}
	we let $R\to\infty$ with $K$ fixed.
	
	\noindent\textit{Removal of the mild regularization
		(Lemma~\ref{lem:remove-mild-regularization}).}
	\begin{equation}\label{eq:mild-shift-constant}
		\calC_{6,2}(K,p):=(J_2^K)^{p/2},
		\qquad p\in\{1,\alpha\},
	\end{equation}
	where $J_2^K$ is defined in \eqref{eq:mild-convolution-constant}.
	For $p=1$, it bounds $\E|b_n(u_n^K(s,y))|$, and for $p=\alpha$, it
	bounds $\E|\sigma_n(u_n^K(s,y))|^\alpha$, uniformly in $n,s,y$.
	It depends only on $p,\alpha,T,\Cgrow,\|u_0\|_{\L^\infty(D)}$ and
	$m_2(K)$, and not on $n$, $\Lambda_n$, $\varepsilon$, $t$, $x$, $r$,
	or the threshold $R$.
	The kernel errors $\calR_{\varepsilon,p}(t,x)$ in
	\eqref{eq:mild-shift-error-tail} are deterministic and depend only on
	$\varepsilon,p,t,x$.
	As before, $\calC_{6,2}(K,p)\to\infty$ as $K\to\infty$. This causes no
	problem, because we first let $\varepsilon\downarrow0$ with $K$ fixed.

	\section{Predictable weak modification}\label{app:predictable-representatives}
	
	We use \emph{Fej\'er means} to obtain a predictable field from the
	distribution-valued state process. For $f\in \H_r$, $r\in\R$,
	define its $n$th sine Fej\'er mean by
	\[
	\fraks_n f(y)
	:=
	\sum_{k=1}^n\left(1-\frac{k}{n+1}\right)
	\langle f,e_k\rangle e_k(y),
	\qquad y\in D.
	\]
	This is the arithmetic average of the first $n+1$ sine partial
	sums, including the zero partial sum. It is well defined because
	$e_k\in \H_{-r}$, so every dual pairing $\langle f,e_k\rangle$
	is finite. By Parseval's identity,
	\[
	\|\fraks_n f-f\|_{\H_r}\longrightarrow0.
	\]
	Thus the approximation converges in $\H_r$, and hence in the sense
	of distributions. For every $f\in\L^2(D)$, \cite{Katznelson}*{Chapter~I, Theorem~3.2}, applied to the odd $2$-periodic extension of $f$, shows that its sine Fej\'er means satisfy
	\[
	\fraks_n f(y)\longrightarrow f(y)
	\quad\text{for Leb-a.e. }y\in D.
	\]
	
	For a c\`adl\`ag state process $w$, we apply this operator to
	$w(t-)$, with $w(0-):=w(0)$. The resulting fields
	$(t,y)\mapsto\fraks_n(w(t-))(y)$ are predictable with respect
	to the natural filtration of $w$. The following lemma shows that,
	when the state agrees with a space--time field, these approximations
	yield a predictable weak modification of that field.
	
	\begin{lemma}\label{lem:predictable-representative}
		Let $w$ be a random element of $\Path{\H_{-r}}$, with $r>1/2$,
		and let $v$ be a jointly measurable field such that, $\P$-a.s.,
		$v\in\L^2((0,T)\times D)$ and
		\[
		\langle w(t),e_k\rangle
		=\int_D v(t,y)e_k(y)\,\dd y
		\quad\text{for Leb-a.e. }t\in(0,T),\quad \forall k\ge1.
		\]
		Then $v$ admits a weak modification $\bar w$ that is predictable
		with respect to the natural filtration of $w$.
	\end{lemma}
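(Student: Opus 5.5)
We define the candidate directly from the Fejér means of the left limits. Let $g_n(t,y):=\fraks_n(w(t-))(y)$ for $(t,y)\in[0,T]\times D$, with $w(0-):=w(0)$. Then set
\[
\bar w(t,y):=\limsup_{n\to\infty} g_n(t,y)\,\mathbf 1_{\{\limsup_n g_n(t,y)=\liminf_n g_n(t,y)\in\R\}},
\]
which equals the limit where it exists and is finite, and $0$ otherwise.

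\emph{Predictability.} Each $t\mapsto\langle w(t-),e_k\rangle$ is left-continuous and adapted to the natural filtration of $w$, because $e_k\in\H_r$ and the pairing is continuous on $\H_{-r}$. So each such coefficient process is predictable, and $g_n$ is a finite sum of predictable processes times continuous functions $e_k(y)$. Hence $g_n$ is $\calP_0\otimes\calB(D)$-measurable. A limsup, a liminf, and the indicator of the set where they agree are measurable operations, so $\bar w$ is predictable.

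\emph{Identification.} Let $\Xi(\omega)\subset(0,T)$ be the null set outside which the pairings agree for all $k$; it can be chosen uniformly in $k$ by a countable union, as in Section~\ref{subsec:joint-convergence}. Let $J(\omega)$ be the countable set of jump times of $w(\omega)$. Fix $\omega$ in the probability-one event where the hypotheses hold and $v(\omega)\in\L^2(\calZ_T)$. By Fubini, $v(\omega,t,\cdot)\in\L^2(D)$ for a.e.\ $t$.

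For $t\notin\Xi\cup J$ with $v(t,\cdot)\in\L^2(D)$, we have $w(t-)=w(t)$. Therefore $\langle w(t-),e_k\rangle=\int_D v(t,y)e_k(y)\dd y$ for all $k$, so $g_n(t,\cdot)=\fraks_n(v(t,\cdot))$. The almost-everywhere Fejér convergence for $\L^2(D)$ functions, Katznelson's theorem applied to the odd periodic extension, gives $g_n(t,y)\to v(t,y)$ for a.e.\ $y$. Hence $\bar w(t,y)=v(t,y)$ for a.e.\ $y$. This covers a.e.\ $t$, and the set $\{(t,y):\bar w\ne v\}$ is jointly measurable. Tonelli then gives $\bar w=v$ Leb-a.e.\ on $\calZ_T$ for this $\omega$, which is exactly the definition of a weak modification.

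\emph{Main obstacle.} The delicate point is measurability rather than convergence. The exceptional sets $\Xi$ and $J$ depend on $\omega$, so we must check that the event $\{\bar w(\omega,\cdot)=v(\omega,\cdot)\ \text{Leb-a.e.}\}$ is measurable and has probability one. Joint measurability of $(\omega,t,y)\mapsto\bar w-v$ together with Tonelli handles this. We must also confirm that using left limits, which is needed for predictability, costs nothing; it does not, because the jump times are countable for each path. If some $\omega$-wise issue with $v(t,\cdot)\notin\L^2$ arises, we restrict to the full-measure set of $t$ produced by Fubini.
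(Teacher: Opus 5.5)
Your proposal is correct and takes the same approach as the paper's proof. Both use the Fejér means of the left-limit state $w(t-)$ to get predictability for the natural filtration of $w$, and both define $\bar w$ as the limit where it exists and is finite, and $0$ otherwise. Off a Lebesgue-null set of times (jump times, exceptional coefficient times, and times with $v(t,\cdot)\notin\L^2(D)$), these means coincide with the Fejér means of $v(t,\cdot)$, so Katznelson's almost-everywhere convergence and Tonelli give the weak modification.
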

	\begin{proof}
		Let $\Fil_t^w:=\sigma(w(s):0\le s\le t)$.
		Set $w(0-):=w(0)$ and define
		\[
		S_nw(t,y)
		:= \fraks_n(w(t-))(y) =
		\sum_{k=1}^n\left(1-\frac{k}{n+1}\right)
		\langle w(t-),e_k\rangle e_k(y).
		\]
		The left-limit coefficients are adapted and left-continuous,
		hence predictable. Therefore each $S_nw$ is measurable with respect
		to the predictable $\sigma$-field of $(\Fil_t^w)$ tensored with
		$\calB(D)$.
		
		Define
		\[
		\bar w(\omega,t,y)
		:=
		\begin{cases}
			\displaystyle\lim_{n\to\infty}S_nw(\omega,t,y),
			&\text{if the limit exists in }\R,\\
			0,&\text{otherwise}.
		\end{cases}
		\]
		Let $\calP_0^w$ denote the predictable $\sigma$-field associated
		with $(\Fil_t^w)$. The set
		\[
		\bigl\{(\omega,t,y) \in \Omega \times [0,T] \times D :
		\lim_{n\to\infty}S_nw(\omega,t,y)
		\text{ exists in }\R\bigr\}
		\]
		belongs to $\calP_0^w\otimes\calB(D)$.
		Thus $\bar w$ is measurable with respect to this $\sigma$-field,
		and hence is a predictable field.
		
		Fix $\omega$ such that
		$v(\omega,\cdot,\cdot)\in\L^2((0,T)\times D)$,
		$w(\omega)$ is c\`adl\`ag, and
		\[
		\langle w(\omega,t),e_k\rangle
		=\int_D v(\omega,t,y)e_k(y)\,\dd y
		\quad\text{for Leb-a.e. }t\in(0,T),\quad \forall k\ge1.
		\]
		These properties hold on an event of probability one.
		For this $\omega$, outside a Lebesgue-null set of times,
		$v(\omega,t,\cdot)\in\L^2(D)$, all the coefficient
		identities hold, and $w(\omega,t-)=w(\omega,t)$, since a c\`adl\`ag path
		has at most countably many jumps. Thus, for these times, $S_nw(t,\cdot)$ equals the weighted sine sum
		\[
		\sum_{k=1}^n\left(1-\frac{k}{n+1}\right)
		\left(\int_D v(t,z)e_k(z)\,\dd z\right)e_k(\cdot),
		\]
		called the $n$th Fej\'er mean of $v(t,\cdot)$. Applied to its odd, $2$-periodic
		extension, \cite{Katznelson}*{Chapter~I, Theorem~3.2} gives
		\[
		S_nw(t,y)\longrightarrow v(t,y)
		\quad\text{for Leb-a.e. }y\in D.
		\]
		Tonelli's theorem therefore yields
		\[
		\P\bigl(\bar w=v
		\text{ Leb-a.e. on }\calZ_T\bigr)=1.
		\]
		Hence $\bar w$ is a weak modification of $v$.
		
		The same approximations also converge in
		$\L^2((0,T)\times D)$, $\P$-a.s.
		Indeed, for the times considered above, put
		$c_k(t):=\int_D v(t,y)e_k(y)\,\dd y$. Parseval's identity gives
		\[
		\|S_nw(t,\cdot)-v(t,\cdot)\|_{\L^2(D)}^2
		=
		\sum_{k=1}^n\left(\frac{k}{n+1}\right)^2|c_k(t)|^2
		+
		\sum_{k>n}|c_k(t)|^2
		\longrightarrow0.
		\]
		This expression is bounded by
		$\|v(t,\cdot)\|_{\L^2(D)}^2$.
		Since $v\in\L^2((0,T)\times D)$, dominated convergence gives
		\[
		\|S_nw-v\|_{\L^2((0,T)\times D)}
		\longrightarrow0,
		\qquad \P\text{-a.s.}
		\]
	\end{proof}
	
	\section*{Acknowledgments}
	I am grateful to Le Chen for many helpful suggestions and insightful
	discussions, and for giving me the freedom to pursue this work during
	my postdoctoral position at Auburn University. I was also partially supported by NSF grants DMS-2246850/2443823 during the preparation of this work.

	\noindent \textbf{AI disclosure statement.}  Large language model tools were used to help with language editing, reference checking, and LaTeX formatting. The author takes full responsibility for the content of this manuscript.

	\phantomsection

\end{document}